\newtheorem{thm}{Theorem}[section]
\newtheorem{prop}[thm]{Proposition}
\newtheorem{cor}[thm]{Corollary}
\newtheorem{defn}[thm]{Definition}
\newtheorem{rem}[thm]{Remark}
\newtheorem{lem}[thm]{Lemma}
\def\XXint#1#2#3{{\setbox0=\hbox{$#1{#2#3}{\int}$} 
		\vcenter{\hbox{$#2#3$}}\kern-.5\wd0}}
\def\e{\varepsilon}
\let\e=\varepsilon
\let\s=\sigma
\let\d=\delta
\def\R{{\mathbb{R}}}
\def\Z{{\mathbb{Z}}}
\def\Tes{{{\cal T}_{\varepsilon}^*}}
\def\Qed{{{\cal Q}_\e^{\diamond}}}
\def\Teb{\mathcal{T}^{b}_{\e}}
\let\O=\Omega
\def\gu{{\bf u}}
\def\gb{{\bf b}}
\def\gf{{\bf f}}
\def\gk{{\bf k}}
\numberwithin{equation}{section}     
\let\O=\Omega
\def\Y{{\bf Y}}
\def\ds{\displaystyle}
\theoremstyle{plain}
\theoremstyle{definition}
\newtheorem{definition}{\bf Definition}
\def\e{\varepsilon}
\def\d{\delta}
\def\O{\Omega}
\def\o{\omega}
\def\R{{\mathbb{R}}}
\def\Z{{\mathbb{Z}}}
\def\Ga{{\bf a}}
\def\Gb{{\bf b}}
\def\Ge{{\bf e}}
\def\Gk{{\bf k}}
\def\GH{{\bf H}}
\def\GN{{\bf N}}
\def\GY{{\bf Y}}
\def\d{\delta}
\def\Tes{{\cal T}^*_\e}
\title{
	%Homogenization of Coulomb-contact in domains with cracks via the periodic unfolding method. Rescaling of norms.
	%% in perforated domains
%	Periodic unfolding of standard for the second order
%	elliptic PDEs inequalities in fractional order Sobolev and Bessel spaces 
 Homogenization of contact problem with Coulomb's friction on periodic cracks
}
\date{\ }
\author{\ G. Griso, 
	%P. Musolino, 
	J. Orlik}
\begin{document}
	
	\maketitle

\noindent
\begin{abstract}
We consider the elasticity problem in a %heterogeneous 
domain with 
%a periodic micro-structure with 
contact on multiple periodic open cracks. 
%in a simply connected domain. 
The contact is described by the Signorini and
Coulomb-friction conditions.  Problem is non-linear, the dissipative
functional depends on the un-known solution and the existence of the
solution for fixed period of the structure is usually proven by the fix-point  argument in the Sobolev
spaces with a little higher regularity, $H^{1+\alpha}$. We rescaled norms, trace, jump and Korn inequalities in fractional Sobolev spaces with positive and negative exponent, using the unfolding technique, introduced by Griso,
Cioranescu and Damlamian. Then we proved the existence and uniqieness of the solution for friction and period fixed. Then we proved the continuous dependency of the solution to the problem with Coulomb's friction on the given friction and then estimated the solution using fixed point theorem.  However, we were not able to pass to the strong limit in the frictional dissipative term.  For this reason, we regularized the problem by adding a fourth-order term, which increased the regularity of the solution and allowed the passing to the limit. This can be interpreted as micro-polar elasticity.
\end{abstract}

\noindent
{\bf Keywords:}  \par
\vspace{9pt}

\noindent 
{{\bf 2010 Mathematics Subject Classification:}}  %
%

%\bigskip

\section{Introduction}
 This paper deals with a static multi-scale contact problem with Coulomb's friction, which arise by time-discretization of a quasi-static problem, as it is shown in \cite{EJ00},\cite{EJ05}.  Such a problem results in a quasi-variational inequality, whose solvability was studied  in \cite{EJ00},\cite{EJ05}. In this paper, we want to repeat these results for multi-scale periodic %composite with closed and 
	open cracks. \\
	We show that the Coulomb-friction problem admits solutions for every fixed period. 
	%This solution is uniquely defined by a vector of rigid displacements of the non-connected domains.
	Then
	we obtain all compactness results and preliminary estimates in terms of the powers of the small parameter, related to the period of the structure, using Korn's inequalities and their rescaling.
	For this reason, we extend results of \cite{CDO} and \cite{GMO}, where asymptotic analysis and one-side Korn inequalities were given for multi-scale contact problem in a periodic domain with Tresca friction, and extend  the unfolding tools introduced by \cite{cdg1} and \cite{cddgz} to fractional order Sobolev spaces. We introduce tools for rescaling their norms and dual norms over domains and manifolds.\\
	We regularize the problem by adding the fourth-order term, in order to pass to the limit and prove the strong convergence of the interface stresses or co-normal derivatives on the cracks. We used the shifting technique as in \cite{EJ05}, or \cite{Radu} to show a better regularity in the macroscopic variable and, then, the strong convergence. Finally, we proved the existence and uniqueness of the solution to the limiting regularized problem.  It is easy to show that the solution of the  regularized $\e$-problem converges to the solution of the contact Coulomb's problem as the regularizing parameter tends to zero. However, it cannot be sent to zero after the passing to the limit, since it enters a denominator of the contruction condition for the fixed point argument. In the conclusion, we were able to homogenize just a regularized problem. The result is still new and interesting and can be applied to micro-polar materials with some additional rotational degrees of freedom.

\section{Geometric set up}
The problem is set in the natural space $\R^{3}$. Denote $\O$ a bounded domain in $\R^3$ with Lipschitz boundary and $\Gamma$ a  subset of $\partial \O$ with a positive Lebesgue measure on $\partial \O$. \\
In the following $\Y\doteq (0,1)^3$,  is the reference cell.   The  crack,  is a closed set denoted  $S$ and strictly included in $\Y$ and called ``open crack''. The crack $S$, is ``open"  in the sense that $\Y^*=\Y\setminus S$ lies on both sides of this surface. The set matrix is $\Y^*\doteq \Y\setminus  S$ (see Figure 1). \\
We assume that $S$ is the closure of an open connected set of the boundary of a domain ${\cal S}$ strictly included in $\Y$ and whose boundary is ${\cal C}^{1,1}$. We  denote  $\nu$ the outward unit normal vector to the boundary of the domain ${\cal S}$ (it belongs to $W^{1,\infty}(\partial {\cal S})^3$). 
%The   results of \cite{cddgz, GMO}  will be our essential tools in the present paper. \begin{center}
%	\includegraphics[width=6cm]{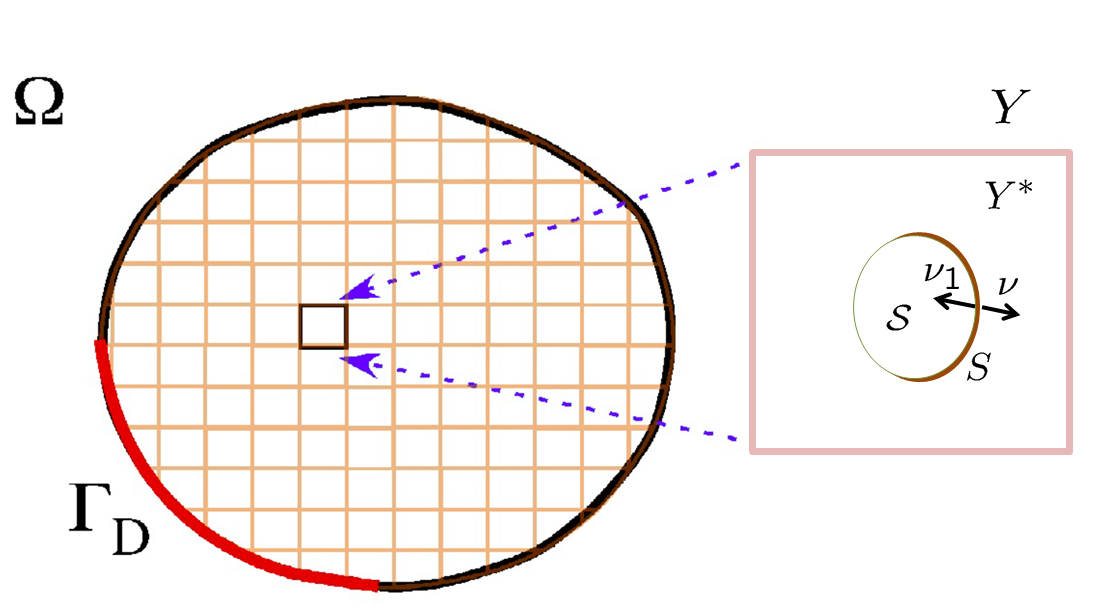}\\
%	Figure 1. The unit cell  $\Y$
%\end{center}
%
%{\clb Change this picture with only one open crack and only one closed crack. Also draw the set ${\cal S}$ and the normal vectors $\nu$, $\nu^1$.}

Recall that in the periodic setting, almost every point  $z\in\R^3$  can be written as
$$z= \bigl[z\bigr] + \bigl\{z\bigr\},\quad [z]\in \Z^3,\quad \{z\}\in \Y.$$ 

Denote 
$$\Xi_\e=\big\{\xi \in \Z^3\;|\; \e \xi+\e \Y\subset \O\big\},\qquad \widehat\O_{\e}=\hbox{interior}\Big\{\bigcup_{\xi\in \Xi_\e}\big(\e \xi+\e\overline \Y\big)\Big\},\qquad \Lambda_{\e}=\O \setminus \overline{\widehat\O_{\e}},$$ this last set contains the parts from cells intersecting the boundary $\partial\O$.

\begin{center}
	\includegraphics[width=12cm]{Open_crack.PNG}\\
	{Figure 1. Bounded domain with periodically distributed open and closed cracks}
\end{center}
The union of all the cracks is denoted $S_{\e}$, 
$$S_{\e} \doteq \Big\{x\in \,\widehat\O_{\e}\;\;\Big|\;\;\left\{\frac{x}{ \e}\right\} \in S \Big\}\subset {\cal S}_\e\doteq \Big\{x\in \,\widehat\O_{\e}\;\;\Big|\;\;\left\{\frac{x}{ \e}\right\} \in {\cal S} \Big\}$$
and the cracked domain
$$\O_{\e}^{*} = \O\setminus S_{\e}.$$
Set
$$
H^1_{\Gamma}(\O^*_\e)=\big\{ u\in H^1(\O^*_\e)\;|\; u=0\;\;\hbox{a.e. on } {\Gamma}\big\},\qquad H^2_{\Gamma}(\O^*_\e)=H^1_{\Gamma}(\O^*_\e)\cap H^2(\O^*_\e).
$$
\subsubsection*{Notations}
\textbullet  \; The normal component of a vector field $v $ on the boundary of a domain with Lipschitz boundary is denoted $v_{\nu}$, while the tangential component $v-v_{\nu}\nu$ is denoted $v_{\tau_\e}$ (where $\nu$ is the outward unit normal to the boundary),

\textbullet  \; the strain tensor of a vector field $v$ is denoted by $e(v)$; its values are symmetric 3x3 real matrices,

\textbullet  \; we use the notations of \cite{cdg1,cddgz} for the unfolding method.

\section{Preliminary results}

\subsection{ Recall on Poincar\'e, Poincar\'e-Wirtinger and Korn inequalities}
In the following,  for every  open bounded set  ${\cal O}\subset \R^3$ and  $\varphi\in L^{1}({\cal O})$,     ${\cal M}_{{\cal O}}(\varphi)$ denotes the mean value of $\varphi$  over   ${\cal O}$,  i.e.,  
$${\cal M}_{{\cal O}}(\varphi) = \frac{1}{ |{\cal O}|}\int_{{\cal O}} \varphi\, dy.$$ 

	Let ${\cal O}$ be a bounded domain in $\R^3$ with Lipschitz boundary.  In ${\cal O}$, the following Poincar\'e and  Poincar\'e-Wirtinger inequalities hold: 
\begin{equation*}
\forall \phi \in H^{1}({\cal O}),\qquad 
		\begin{aligned}
		&\|\phi\|_{H^{1}({\cal O})}\leq C \big(\|\nabla \phi\|_{L^2({\cal O})}+\|\phi\|_{L^2(\partial {\cal O})}\big),\\
		&\|\phi-{\cal M}_{{\cal O}}(\phi)\|_{L^2({\cal O})}\leq C \|\nabla \phi\|_{L^2({\cal O})}.	
		\end{aligned}
\end{equation*}
where the constant $C$ depends on ${\cal O}$. 

 Recall that the space of rigid displacements  is
$${\cal R}=\big\{ r \;|\; r(x)= a+b\land x,\;\; (a,b)\in \R^3\times \R^3\big\}.$$
A bounded domain ${\cal O}\subset \R^3$ satisfies the Korn-Wirtinger inequality if there exists a constant $C$ such that for every $v\in H^1({\cal O})^3$ there exists $r\in {\cal R}$ such that
\begin{equation}\label{Eq. 37}
\|v-r\|_{H^1({\cal O})}\le C\|e(v)\|_{L^2({\cal O})}.
\end{equation}
We equip $H^1({\cal O})^3$ with the  scalar product
\begin{equation}\label{PS}
<<u,v>>=\int_{\cal O} e(u):e(v)\, dx+\int_{\cal O} u\cdot v\, dx.
\end{equation}
If ${\cal O}\subset \R^3$ is  a bounded domain with Lipschitz boundary, ${\cal O}$ satisfies the Korn-Wirtinger inequality and the associated norm is equivalent to the usual norm of $H^1({\cal O})^3$. For the displacements in $H^1({\cal O})^3$ one also has the following Poincar\'e inequality:
\begin{equation}\label{TR}
\forall v \in H^1({\cal O})^3, \quad \|v\|_{H^{1/2}(\partial {\cal O})}\leq C \big(\|e(v)\|_{L^2({\cal O})} + \|v\|_{L^2({\cal O})}\big)
\end{equation}
where $C$ depends on ${\cal O}$.

\begin{definition}\label{def1} Let  ${\cal O}$ be a bounded domain in $\R^3$ with Lipschitz boundary, we denote 
	$$W({\cal O})\doteq\Big\{v\in H^1({\cal O})^3\;\;|\;\; \int_{\cal O} v(x)\cdot r(x) \,dx=0\;\;\hbox{for all } r\in {\cal R}\Big\}.$$
The space $W({\cal O})$ is the orthogonal of ${\cal R}$ in $H^1({\cal O})^3$ for the  scalar product \eqref{PS}.
\end{definition}
Recall that there exists a constant $C$ such that for every $v\in W({\cal O})$
\begin{equation}\label{Eq. 38}
\|v\|_{H^1({\cal O})}\le C \|e(v)\|_{L^2({\cal O})}.
\end{equation}

\subsection{ The spaces $H^\alpha(S)$ and $H^\alpha(S_{\e})$, $\alpha\in (0,1)$}

The space $H^\alpha(S)$, $\alpha\in (0,1)$, is the following subspace of $L^2(S)$:
$$H^\alpha(S)\doteq \Big\{v\in L^2(S)\; | \; \int_{S}\int_{S}\frac{|v(x)-v(y)|^2}{|x-y|^{2+2\alpha}}d\sigma_x d\sigma_y<+\infty\Big\}.$$
It is equipped with the semi-norm
\begin{equation}
\|v\|^{'2}_{H^\alpha(S)}=\int_{S}\int_{S}\frac{|v(x)-v(y)|^2}{|x-y|^{2+2\alpha}}d\sigma_x d\sigma_y
\label{Hs-norm}
\end{equation} and the Sobolev-Slobodetsky norm
\begin{equation*}
\|v\|_{H^\alpha(S)}=\sqrt{\|v\|^2_{L^2(S)}+\|v\|^{'2}_{H^\alpha(S)}}.
\end{equation*}	
The space $H^\alpha(S_{\e})$, $\alpha\in (0,1)$ is the subspace of $L^2(S_{\e})$ containing the functions whose restrictions to every connected component of $S_{\e}$ belong to the space $H^\alpha$ of this connected component
$$
H^\alpha(S_{\e})\doteq \Big\{v\in L^2(S_{\e})\; | \; \sum_{\xi\in \Xi_\e}\int_{\e\xi+\e S}\int_{\e\xi+\e S}\frac{|v(x)-v(y)|^2}{|x-y|^{2+2\alpha}}d\sigma_x d\sigma_y<+\infty\Big\}.
$$ It is equipped with the semi-norm 
\begin{equation*}
\|v\|^{'2}_{H^\alpha(S_{\e})}=\sum_{\xi\in \Xi_\e}\int_{\e\xi+\e S}\int_{\e\xi+\e S}\frac{|v(x)-v(y)|^2}{|x-y|^{2+2\alpha}}d\sigma_x d\sigma_y
\end{equation*} and the Sobolev-Slobodetsky norm
\begin{equation}\label{NH1/2}
\|v\|_{H^\alpha(S_{\e})}=\sqrt{\|v\|^2_{L^2(S_\e)}+\e^{2\alpha}\|v\|^{'2}_{H^\alpha(S_{\e})}}.
\end{equation}

\subsection{Definition of the jumps}\label{Ss2.2}

For every $v\in H^1(\Y^*)$ denote $v_{\partial {\cal S}^-}$ the trace of $v_{|{\cal S}}$ on $\partial {\cal S}$ and $v_{\partial {\cal S}^+}$  the trace of $v_{|\Y^* \setminus \overline{\cal S}}$ on $\partial {\cal S}$. The jump of $v$ across the surface $S$ is 
$$[v]_{S}=\big(v_{|\partial {\cal S}^+}-v_{|\partial {\cal S}^-}\big)_{|S},\qquad [v]_{S}\in \widetilde{H}^{1/2}(S)\footnotemark.
\footnotetext{ $\widetilde{H}^{1/2}(S)$ is the space of all $v\in H^{1/2}(S)$ whose extension by $0$ in $\partial {\cal S}\setminus S$ belongs to $H^{1/2}(\partial {\cal S})$ (see \cite{Grisward}).} 
$$
We also set for every $v\in H^1(\Y^*)^3$
$$
 [v_\nu]_{S}= [ v ]_{S}\cdot\nu\in \widetilde{H}^{1/2}(S), \qquad  [v_\tau]_{S}= [v]_{S}- [v_\nu]_{S}\,\nu\in \widetilde{H}^{1/2}(S)^3.
 $$
In a similar way, for every $v\in H^1(\O^*_\e)^3$ one defines $[v]_{S_\e}$ and  $[v_{\nu_\e}]_{S_\e}$ and $ [v_{\tau_\e}]_{S_\e}$.

%{\clr
%\begin{prop}\label{Pjump} There exits a constant $C$ which depends on $S$, such that
%	\[
%	\|[u]\|_{H^{1/2}( S)}\leq C \|\nabla u\|_{L^2(\Y^*)} \qquad \forall u \in H^{1}(\Y^*)\, .
%	\]
%\end{prop}
%
%\begin{proof}
%	We first note that by the trace theorem, there exists  $C_T(\Y^*,S)$ such that
%	\[
%	\|[u]\|_{H^{1/2}( S)}\leq C_T(\Y^*,S) \|u\|_{H^1(\Y^*)} \qquad \forall u \in H^{1}(\Y^*)\, .
%	\]
%	On the other hand, since constant functions on $\Y^*$ have no jump on $S$, we have
%	\[
%	[u]=[u-{\cal M}_{{\Y^*}}(u)] \qquad \forall u \in H^{1}(\Y^*)\, .
%	\]
%	As a consequence
%	\[
%	\begin{split}
%	\|[u]\|_{H^{1/2}( S)}&=\|[u-{\cal M}_{{\Y^*}}(u)]\|_{H^{1/2}( S)}\\
%	&\leq C_T \|u-{\cal M}_{{\Y^*}}(u) \|_{H^1(\Y^*)}\leq  C_J \|\nabla u\|_{L^2(\Y^*)}\qquad \forall u \in H^{1}(\Y^*)\, .
%	\end{split}
%	\]
%\end{proof}
%}

\subsection{Some recalls on the main periodic unfolding operators}

We follow the notations and definitions of \cite{cddgz}.
\begin{prop}\label{prop4.1} Assume $p \in [1,+\infty]$, then 
	\begin{itemize}
	\item  for every  $\phi \in L^{p}(\O^*_\e)$  one has (recall that $|\Y|=1$)
$$\|\Tes(\phi)\|_{L^{p}(\O\times \Y^{\ast})}=\|\phi\|_{L^{p}(\widehat\O^*_\e)}\leq \|\phi\|_{L^{p}(\O^*_\e)}.$$
		\item for every	$\phi \in W^{1,p}(\O^*_\e)$ one has $\nabla_{y}\Tes(\phi)(x,y)=\e \Tes(\nabla \phi)(x,y)$ for a.e. $(x,y)\in \O\times\Y^*$ and 
		$$\|\nabla_{y}\Tes(\phi)\|_{L^{p}(\O\times \Y^*)}=\e\|\nabla \phi\|_{L^{p}(\widehat\O^*_\e)}\leq \e\|\nabla \phi\|_{L^{p}(\O^*_\e)}.$$
	\end{itemize}
\end{prop}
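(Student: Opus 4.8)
The statement is the standard isometry property of the unfolding operator $\Tes$ restricted to the cracked domain, together with its compatibility with gradients. The plan is to reduce everything to a change of variables on a single cell and then sum over the index set $\Xi_\e$.

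First I would recall the definition of $\Tes$: for $\phi \in L^p(\O^*_\e)$ and for $\xi \in \Xi_\e$, one sets $\Tes(\phi)(x,y) = \phi(\e\xi + \e y)$ for $x \in \e\xi + \e\Y$ and $y \in \Y^*$ (and $\Tes(\phi) = 0$ on $\Lambda_\e \times \Y^*$, i.e. on the cells that meet $\partial\O$). By construction, the image lives in $L^p(\O\times\Y^*)$ because $\e\xi + \e S \subset S_\e$ for each $\xi\in\Xi_\e$, so the unfolded function is defined off the crack set in $\Y$. Then for each fixed $\xi$ I would compute, by the change of variables $z = \e\xi + \e y$ (with Jacobian $\e^3 = |\e\Y|$ and $|\Y|=1$),
\begin{equation*}
\int_{\e\xi+\e\Y}\int_{\Y^*}|\Tes(\phi)(x,y)|^p\,dy\,dx = |\e\Y|\int_{\Y^*}|\phi(\e\xi+\e y)|^p\,dy = \int_{(\e\xi+\e\Y)\cap\O^*_\e}|\phi(z)|^p\,dz,
\end{equation*}
with the obvious modification $\mathrm{esssup}$ in place of the integral when $p=+\infty$. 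Summing over $\xi\in\Xi_\e$ and recognizing $\widehat\O_\e = \bigcup_{\xi\in\Xi_\e}(\e\xi+\e\overline\Y)$, hence $\widehat\O^*_\e = \widehat\O_\e \setminus S_\e$, gives $\|\Tes(\phi)\|_{L^p(\O\times\Y^*)} = \|\phi\|_{L^p(\widehat\O^*_\e)}$; the inequality $\|\phi\|_{L^p(\widehat\O^*_\e)} \le \|\phi\|_{L^p(\O^*_\e)}$ is immediate since $\widehat\O^*_\e \subset \O^*_\e$.

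For the second item, I would use that for $\phi \in W^{1,p}(\O^*_\e)$ the restriction of $\phi$ to each cell $\e\xi+\e\Y^*$ is in $W^{1,p}$ of that set, so on each such cell $\Tes(\phi)(x,\cdot) = \phi(\e\xi+\e\,\cdot\,)$ is a genuine $W^{1,p}(\Y^*)$ function of $y$ and the chain rule applies: $\nabla_y\Tes(\phi)(x,y) = \e\,(\nabla\phi)(\e\xi+\e y) = \e\,\Tes(\nabla\phi)(x,y)$ for a.e. $(x,y)\in(\e\xi+\e\Y)\times\Y^*$, hence a.e. on $\O\times\Y^*$ after summing (the contribution of $\Lambda_\e$ being zero). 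The norm identity $\|\nabla_y\Tes(\phi)\|_{L^p(\O\times\Y^*)} = \e\|\Tes(\nabla\phi)\|_{L^p(\O\times\Y^*)} = \e\|\nabla\phi\|_{L^p(\widehat\O^*_\e)}$ then follows by applying the first item to the vector field $\nabla\phi$ componentwise, and the final inequality again from $\widehat\O^*_\e\subset\O^*_\e$.

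The only genuinely non-routine point is checking that the unfolded function really takes values in $L^p(\O\times\Y^*)$ — i.e. that the crack geometry is respected cell by cell — and that the cellwise chain rule is legitimate across the crack; both reduce to the inclusion $\e\xi+\e S\subset S_\e$ for $\xi\in\Xi_\e$ and to the fact that $H^1(\O^*_\e)$, by definition, only requires Sobolev regularity on the connected components of $\O^*_\e$, so no regularity is claimed or used across $S_\e$. Once that is in place the argument is a one-cell computation plus a sum, exactly as in the uncracked case treated in \cite{cdg1,cddgz}, and the $p=+\infty$ case is handled by the same change of variables with suprema.
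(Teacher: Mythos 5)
Your proof is correct: the cell-by-cell change of variables with Jacobian $\e^{3}$, the summation over $\Xi_\e$, and the cellwise chain rule for the gradient are exactly the standard argument, and your handling of the crack (the inclusion $\e\xi+\e S\subset S_\e$ and the absence of any regularity requirement across $S_\e$) is the right observation. The paper itself states this proposition without proof as a recall from the unfolding literature (\cite{cdg1,cddgz}), and your argument is precisely the one given there, so there is nothing to add.
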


\begin{defn}   Assume $p \in [1,+\infty]$. The operator  $\mathcal{T}^b_\e$ from $L^p(S_{\e})$ into $L^p(\O\times S)$ is defined by 
	\[
\forall \phi \in L^p(S_{\e}),\qquad 
	\left\{\begin{aligned}
	\mathcal{T}^b_\e(\phi)(x,y)&=\phi(\e[x/\e]+\e y)\, \quad \hbox{for a.e. } (x,y)\in \widehat{\O}_\e\times S,\\
	\mathcal{T}^b_\e(\phi)(x,y)&=0\, \hskip 25mm  \hbox{for a.e. } (x,y)\in \Lambda_{\e}\times S.
	\end{aligned}\right.
	\]
\end{defn}

\begin{prop}\label{prop3.3} Assume $p \in [1,+\infty]$, then  for every $\phi \in L^{p}(S_{\e})$ one has
	\[
	\begin{aligned}
	&\int_{\O \times S}\mathcal{T}^b_\e(\phi)(x,y)\, dxd\sigma_y=\e\int_{S_{\e}}\phi(x)\, d\sigma_x,\\
	&\|\mathcal{T}^b_\e(\phi)\|_{L^{p}(\O \times S)}=\e^{1/p}\|\phi\|_{L^p(S_{\e})}.
	\end{aligned}
	\]
\end{prop}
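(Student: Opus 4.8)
The plan is to reduce both identities to the change of variables $x = \e\xi + \e y$ on each $\e$-scaled copy of the crack, exactly mirroring the proof of the corresponding volume statement for $\Tes$ recalled in Proposition \ref{prop4.1}. First I would write $S_{\e}$ as the disjoint union $\bigcup_{\xi\in\Xi_\e}(\e\xi+\e S)$ (disjoint up to $d\sigma$-null sets, since the cracks $S$ are strictly interior to the cell $\Y$ and hence the translates do not meet). On the piece $\e\xi+\e S$ the unfolding operator $\tbe(\phi)$ restricted to $(\e\xi+\e\Y)\cap\widehat\O_\e$ in the $x$-variable is simply the constant-in-$x$ function $y\mapsto \phi(\e\xi+\e y)$; integrating first in $x$ over the cell of volume $|\e\Y|=\e^3$ and then in $y$ over $S$ gives
\begin{equation*}
\int_{\O\times S}\tbe(\phi)(x,y)\,dx\,d\sigma_y
=\sum_{\xi\in\Xi_\e}\e^3\int_{S}\phi(\e\xi+\e y)\,d\sigma_y.
\end{equation*}
Then I would apply the surface change of variables $x=\e\xi+\e y$: since $S$ is a $2$-dimensional manifold, the surface measure transforms with the Jacobian factor $\e^2$, i.e. $d\sigma_x = \e^2\, d\sigma_y$, so $\e^3\int_S \phi(\e\xi+\e y)\,d\sigma_y = \e\int_{\e\xi+\e S}\phi(x)\,d\sigma_x$. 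Summing over $\xi\in\Xi_\e$ recovers $\e\int_{S_\e}\phi\,d\sigma_x$, which is the first identity (the contribution of $\Lambda_\e\times S$ vanishes because $\tbe(\phi)=0$ there by definition).

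For the second identity I would run the same computation with $|\phi|^p$ in place of $\phi$ (for $p<\infty$; the case $p=\infty$ is immediate since $\tbe$ is, cellwise, a rescaling that neither creates nor destroys essential suprema, and $\tbe(\phi)=0$ on $\Lambda_\e\times S$ only lowers the sup-norm — or, more precisely, $\|\tbe(\phi)\|_{L^\infty(\O\times S)}=\|\phi\|_{L^\infty(S_\e)}$ since every value of $\phi$ on $S_\e$ is attained). Indeed $\tbe(|\phi|^p)=|\tbe(\phi)|^p$ pointwise, so by the first identity
\begin{equation*}
\|\tbe(\phi)\|_{L^p(\O\times S)}^p=\int_{\O\times S}|\tbe(\phi)|^p\,dx\,d\sigma_y=\e\int_{S_\e}|\phi|^p\,d\sigma_x=\e\,\|\phi\|_{L^p(S_\e)}^p,
\end{equation*}
and taking $p$-th roots yields $\|\tbe(\phi)\|_{L^p(\O\times S)}=\e^{1/p}\|\phi\|_{L^p(S_\e)}$.

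The only genuinely delicate point — and the step I would flag as the main obstacle — is bookkeeping the Jacobian of the surface change of variables correctly: one must be sure that the map $y\mapsto \e\xi+\e y$ scales the $2$-dimensional Hausdorff measure on $S$ by exactly $\e^2$ (not $\e^3$), which is what produces the single power of $\e$ in $\e\int_{S_\e}\phi\,d\sigma$ rather than the power one would naively carry over from the volume case. Since $\e\xi$ is a pure translation and $y\mapsto\e y$ is a homothety of ratio $\e$ acting on a $2$-rectifiable set, this is standard, but it is the place where the statement's specific powers of $\e$ come from and thus deserves to be stated carefully. Everything else is the routine disjoint-decomposition-plus-change-of-variables argument already used for $\Tes$.
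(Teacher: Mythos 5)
Your proof is correct and follows exactly the standard cell-by-cell change of variables (with the $2$-dimensional scaling $d\sigma_x=\e^2 d\sigma_y$) that underlies this recalled result from the unfolding literature; the paper itself states Proposition \ref{prop3.3} without proof, but the same bookkeeping appears explicitly in its proof of Lemma \ref{lem46}, where the factor $\e^4=\e^2\cdot\e^2$ for the double surface integral confirms your Jacobian accounting. Your handling of the $\Lambda_\e\times S$ contribution and of the case $p=\infty$ is also correct, so nothing is missing.
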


\subsection{Estimates in $L^2(\O; H^{\alpha}(S))$, 
%and $L^2(\O; H^{-\alpha}(S))$, 
$\alpha\in (0,1)$.}

As immediate consequence of the definitions of the  semi-norms in $H^{\alpha}(S)$ and $H^{\alpha}(S_\e)$, we have  the following lemma and corollary:
\begin{lem}\label{lem46} For every $u\in H^{\alpha}(S_\e)$, $\alpha\in(0,1)$, one has
	$$\|\mathcal{T}^b_\e(u)\|'_{L^2(\O ; H^\alpha(S))}=\e^{1/2+\alpha}\|u\|'_{H^\alpha(S_\e)}.$$
\end{lem}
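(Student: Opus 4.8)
The plan is a direct computation: unfold the left-hand semi-norm cell by cell, then undo the unfolding on the right-hand side by a rescaling change of variables on the surface $S$. There is no genuine difficulty, which is precisely why the statement is listed as an immediate consequence of the definitions; the only things to watch are the powers of $\e$ produced by the $2$-dimensional surface measures versus the $3$-dimensional cell volumes, and the fact that the part $\Lambda_\e$ of $\O$ contributes nothing since $\mathcal{T}^b_\e$ is set to zero there.

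First recall that, by definition of the Bochner-type semi-norm,
\[
\|w\|^{'2}_{L^2(\O;H^\alpha(S))}=\int_{\O}\int_S\int_S\frac{|w(x,y)-w(x,y')|^2}{|y-y'|^{2+2\alpha}}\,d\sigma_y\,d\sigma_{y'}\,dx .
\]
I apply this to $w=\mathcal{T}^b_\e(u)$. Since $\mathcal{T}^b_\e(u)=0$ on $\Lambda_\e\times S$, only $\widehat\O_\e\times S$ contributes, and $\widehat\O_\e$ is, up to a null set, the disjoint union of the cells $\e\xi+\e\Y$, $\xi\in\Xi_\e$. On such a cell one has $[x/\e]=\xi$, so $\mathcal{T}^b_\e(u)(x,y)=u(\e\xi+\e y)$ does not depend on $x$, and the $x$-integration over the cell simply multiplies by $|\e\Y|=\e^3$. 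Hence
\[
\|\mathcal{T}^b_\e(u)\|^{'2}_{L^2(\O;H^\alpha(S))}=\e^3\sum_{\xi\in\Xi_\e}\int_S\int_S\frac{|u(\e\xi+\e y)-u(\e\xi+\e y')|^2}{|y-y'|^{2+2\alpha}}\,d\sigma_y\,d\sigma_{y'} .
\]

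Next, in each summand of $\|u\|^{'2}_{H^\alpha(S_\e)}=\sum_{\xi\in\Xi_\e}\int_{\e\xi+\e S}\int_{\e\xi+\e S}|u(x)-u(x')|^2|x-x'|^{-2-2\alpha}\,d\sigma_x\,d\sigma_{x'}$ I substitute $x=\e\xi+\e y$, $x'=\e\xi+\e y'$ with $y,y'\in S$. Because $S$ is a two-dimensional surface, $d\sigma_x=\e^2\,d\sigma_y$, while $|x-x'|^{2+2\alpha}=\e^{2+2\alpha}|y-y'|^{2+2\alpha}$, so the factors combine to $\e^{4}/\e^{2+2\alpha}=\e^{2-2\alpha}$. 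Summing over $\xi$,
\[
\|u\|^{'2}_{H^\alpha(S_\e)}=\e^{2-2\alpha}\sum_{\xi\in\Xi_\e}\int_S\int_S\frac{|u(\e\xi+\e y)-u(\e\xi+\e y')|^2}{|y-y'|^{2+2\alpha}}\,d\sigma_y\,d\sigma_{y'} .
\]
Comparing the last two displays gives $\|\mathcal{T}^b_\e(u)\|^{'2}_{L^2(\O;H^\alpha(S))}=\e^{3}\e^{-(2-2\alpha)}\|u\|^{'2}_{H^\alpha(S_\e)}=\e^{1+2\alpha}\|u\|^{'2}_{H^\alpha(S_\e)}$, and taking square roots yields the claimed identity $\|\mathcal{T}^b_\e(u)\|'_{L^2(\O;H^\alpha(S))}=\e^{1/2+\alpha}\|u\|'_{H^\alpha(S_\e)}$. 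The only step that deserves care is the exponent bookkeeping just described; everything else is a routine change of variables.
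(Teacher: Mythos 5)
Your proof is correct and follows essentially the same route as the paper: a cell-by-cell change of variables $x=\e\xi+\e y$ on $S$, with the factor $\e^{2}$ per surface measure and $\e^{2+2\alpha}$ from the kernel, combined with the factor $\e^{3}$ coming from the constancy of $\mathcal{T}^b_\e(u)(\cdot,y)$ on each cell (and the vanishing on $\Lambda_\e$). The paper performs the same bookkeeping in a single chain of equalities starting from $\|u\|'_{H^\alpha(S_\e)}$, so there is nothing to add.
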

\begin{proof} One has 
	\begin{align*}
	\big(\|u\|'_{H^\alpha(S_\e)}\big)^2&=\sum_{\xi\in \Xi_\e}\int_{\e\xi+\e S}\int_{\e\xi+\e S}\frac{|u(x)-u(y)|^2}{|x-y|^{2+2\alpha}}\, d\sigma_x d\sigma_y\\
	&= \sum_{\xi\in \Xi_\e}\int_{S}\int_{S}\e^4\frac{|u(\varepsilon\xi +\varepsilon s)-u(\varepsilon\xi +\varepsilon t)|^2}{\varepsilon^{2+2\alpha}|\xi +s-\xi - t|^{2+2\alpha}}\, d\sigma_s d\sigma_t \\
	&= \varepsilon^{-1-2\alpha}\int_{\O}\int_{S}\int_{S}\frac{|\mathcal{T}^b_\varepsilon (u)(x,s)-\mathcal{T}^b_\varepsilon (u)(x,t)|^2}{|s- t|^{2+2\alpha}}\, d\sigma_s d\sigma_t \, dx\\&= \varepsilon^{-1-2\alpha}\Big(\|\mathcal{T}^b_\varepsilon (u)\|^{'}_{L^2(\O;H^{\alpha}(S))}\Big)^2\, .
	\end{align*}
	The equality is proved.
\end{proof}
As immediate consequence of Propositions \ref{prop3.3} and \ref{lem46} one has 
\begin{cor}\label{unfoldet_trace} For every  $\phi\in {H}^\alpha(S_\e)$, $\alpha\in(0,1)$, one has
$$\|[\mathcal{T}^b_\e(\phi)]\|_{L^2(\O ; {H}^{\alpha}(S))}=\e^{1/2}\|[\phi]\|_{{H}^{\alpha}(S_\e)}.$$
For every $u\in H^1(\O^*_\e)^3$, one has
\begin{equation}\label{JumpS}
\begin{aligned}
\|[\mathcal{T}^b_\e(u)]_{S}\|_{L^2(\O ; {H}^{1/2}(S))}=\e^{1/2}\|[u]_{S_\e}\|_{{H}^{1/2}(S_\e)}\leq C_0 \e \|e(u)\|_{L^2(\O^*_\e)}.
\end{aligned}
\end{equation}
The constant does not depend on $\e$, it depends on the crack $S$.
\end{cor}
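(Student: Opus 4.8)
The first part is just a combination of Lemma \ref{lem46} with the $L^2$-isometry of Proposition \ref{prop3.3}. Indeed, applying $\mathcal{T}^b_\e$ to the jump $[\phi]\in H^\alpha(S_\e)$, Proposition \ref{prop3.3} (with $p=2$) gives $\|\mathcal{T}^b_\e([\phi])\|_{L^2(\O;L^2(S))}=\e^{1/2}\|[\phi]\|_{L^2(S_\e)}$, while Lemma \ref{lem46} gives $\|\mathcal{T}^b_\e([\phi])\|'_{L^2(\O;H^\alpha(S))}=\e^{1/2+\alpha}\|[\phi]\|'_{H^\alpha(S_\e)}$. Here one must observe that $\mathcal{T}^b_\e$ commutes with the jump operation, i.e.\ $[\mathcal{T}^b_\e(v)]_S = \mathcal{T}^b_\e([v]_{S_\e})$ up to the obvious identification: on a cell $\e\xi+\e\Y$ the two traces $v_{\partial\mathcal S^\pm}$ of $v$ on $\e\xi+\e\partial\mathcal S$ rescale, under the change of variables $x\mapsto \e\xi+\e y$, to the traces of the unfolded function on $\partial\mathcal S$, so the jump commutes with rescaling on each cell and vanishes on $\Lambda_\e$ where $\mathcal{T}^b_\e$ itself vanishes. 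Then one recalls the scaling convention \eqref{NH1/2}: the full norm $\|\cdot\|_{H^\alpha(S_\e)}$ already carries the weight $\e^{2\alpha}$ in front of the squared semi-norm, so that
\[
\e\,\|[\phi]\|^2_{H^\alpha(S_\e)} = \e\|[\phi]\|^2_{L^2(S_\e)} + \e^{1+2\alpha}\|[\phi]\|'^2_{H^\alpha(S_\e)} = \|\mathcal{T}^b_\e([\phi])\|^2_{L^2(\O;L^2(S))} + \|\mathcal{T}^b_\e([\phi])\|'^2_{L^2(\O;H^\alpha(S))},
\]
which is exactly $\|[\mathcal{T}^b_\e(\phi)]\|^2_{L^2(\O;H^\alpha(S))}$ once that space is given its standard (unweighted) Sobolev--Slobodetsky norm. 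Taking square roots yields the first identity.

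For the displacement version \eqref{JumpS}, I would first specialize $\alpha=1/2$ in the identity just proved, applied to $\phi=u\in H^1(\O^*_\e)^3\hookrightarrow H^{1/2}(S_\e)^3$ via the jump trace of Section \ref{Ss2.2}; this gives the equality $\|[\mathcal{T}^b_\e(u)]_S\|_{L^2(\O;H^{1/2}(S))}=\e^{1/2}\|[u]_{S_\e}\|_{H^{1/2}(S_\e)}$. The remaining inequality $\|[u]_{S_\e}\|_{H^{1/2}(S_\e)}\le C_0\,\e^{1/2}\|e(u)\|_{L^2(\O^*_\e)}$ is the genuine content. The plan is to prove it cell by cell after unfolding: on the reference cell, for $w=\mathcal{T}^b_\e(u)(x,\cdot)\in H^1(\Y^*)^3$ one has the trace/jump estimate $\|[w]_S\|_{H^{1/2}(S)}\le C\|w\|_{H^1(\Y^*)}$ (continuity of the jump trace from $H^1(\Y^*)$ into $\widetilde H^{1/2}(S)\subset H^{1/2}(S)$, valid since $\partial\mathcal S$ is $\mathcal C^{1,1}$), and then, since rigid displacements have zero jump, one can subtract an arbitrary $r\in\mathcal R$ and invoke the Korn--Wirtinger inequality \eqref{Eq. 37} on $\Y^*$ to get $\|[w]_S\|_{H^{1/2}(S)}\le C\|e(w)\|_{L^2(\Y^*)}$. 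Integrating the square of this over $x\in\O$, using the Proposition \ref{prop4.1} scaling $\|e_y(\mathcal{T}^b_\e(u))\|_{L^2(\O\times\Y^*)}=\e\|e(u)\|_{L^2(\widehat\O^*_\e)}\le\e\|e(u)\|_{L^2(\O^*_\e)}$ together with the relation $\|[\mathcal{T}^b_\e(u)]_S\|^2_{L^2(\O;H^{1/2}(S))}=\e\|[u]_{S_\e}\|^2_{H^{1/2}(S_\e)}$, and matching powers of $\e$, produces \eqref{JumpS} with $C_0$ depending only on the Korn constant and the trace constant of the fixed geometry $(\Y^*,S)$, hence independent of $\e$.

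The main obstacle is the uniformity of the constant, and it has two facets. First, one needs the jump trace $H^1(\Y^*)\to H^{1/2}(S)$ to be bounded with a constant depending only on the reference geometry — this is where the $\mathcal C^{1,1}$-regularity of $\partial\mathcal S$ and the ``open crack'' hypothesis (that $\Y^*$ lies on both sides of $S$, so both one-sided traces make sense) are essential; without regularity the extension-by-zero characterization of $\widetilde H^{1/2}(S)$ and the boundedness of the trace could fail. Second, one must be careful that unfolding only sees the interior cells $\widehat\O_\e$: contributions from $\Lambda_\e$ are simply absent because $S_\e\subset\widehat\O_\e$ by definition and $\mathcal{T}^b_\e$ is set to zero there, so no boundary-layer term appears and the estimate is clean. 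A subtler bookkeeping point, which I would check explicitly, is the interplay of the three different $\e$-powers: the $\e^{1/2}$ from the $L^2(S_\e)$ isometry, the extra $\e^{1/2}$ hidden in the weighted $H^{1/2}(S_\e)$-norm \eqref{NH1/2}, and the $\e$ from $\nabla_y\mathcal{T}^b_\e=\e\mathcal{T}^b_\e\nabla$; these combine so that the final bound on $\|[u]_{S_\e}\|_{H^{1/2}(S_\e)}$ carries precisely one power $\e^{1/2}$ and not $\e$ or $\e^{3/2}$, and getting this exponent right is the one place a routine computation can go astray.
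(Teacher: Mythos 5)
Your proposal is correct and follows essentially the same route as the paper: the first identity is the combination of Proposition \ref{prop3.3} with Lemma \ref{lem46} under the weighted norm convention \eqref{NH1/2}, and the inequality in \eqref{JumpS} is obtained exactly as in the paper's proof, namely by the per-cell Korn--Wirtinger inequality on $\Y^*$ plus the jump-trace bound (rigid displacements having zero jump), applied to the unfolded field $\Tes(u)(\e\xi,\cdot)$ and recombined through the unfolding isometries. Your $\e$-power bookkeeping matches the paper's, so nothing further is needed.
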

\begin{proof} Since the domain  $\Y^{*}$  satisfies   the Korn inequality, for every  $v\in H^1(\Y^{*} )^3$ there exists a rigid displacement $r$  such that
$$\|v-r \|_{H^{1}(\Y^* )}\leq C \|e_y(u)\|_{L^2(\Y^* )}.$$
The above inequality yields
\begin{equation}\label{JumpS-bis}
\|[v]_{S}\|_{L^2(S)}+\|[v]_{S}\|'_{H^{1/2}(S)}\leq C \|v-r\|_{H^{1}(\Y^* )}\leq C_0\|e_y(v)\|_{L^2(\Y^* )}.
\end{equation}
Applying the above estimate with $v=\Tes(u)(\e\xi,\cdot)$, $\xi\in \Xi_\e$, easily  yields \eqref{JumpS}.
\end{proof}

\subsection{Estimates for the displacements in $H^1_\Gamma(\O^*_\e)$,  their traces and jumps}\label{SS4.6}
Let $u$ be in $H^1_{\Gamma}(\O^*_\e)^3$ and $r_u(\e\xi,\cdot)$  the orthogonal projections of $u_{|\e\xi+\e {\Y}}$, $\xi\in \Xi_\e$, on ${\cal R}$, 
$$
r _{u}(\e\xi,x)=a_u(\e\xi)+  b_u(\e\xi)\land \Big( \e\Big\{{x\over \e}\Big\}\Big),\qquad x \in \e\xi+\e {\Y}, \quad a_u(\e\xi),\; b_u(\e\xi)\in \R^3.
$$ 
We start with the Korn inequality for the cracked domain. 
\begin{lem} \label{prop5.1}
	There exists a constant $C$ (independent of $\e$) such that  for every $u\in H^1(\O_\e)^3$
	\begin{equation}
	\label{smalldomain}
	\begin{aligned}
	&\sum_{\xi\in \Xi_\e}\big(\|u-r _u \|^2_{L^{2}(\e\xi+\e{\Y}^*)}+ \e^2 \|\nabla (u-r _u)\|^2_{L^{2} (\e\xi+\e{\Y}^*)}\big) \leq  C \e^2 \|e(u)\|^2_{L^2(\O^*_{\e})},\\
	&\|u-r _u \|_{L^{2}(S_{\e})} \leq  C \sqrt \e \|e(u)\|_{L^2(\O^*_{\e})},\qquad 
	\|u-r _u \|'_{H^{1/2}(S_{\e})} \leq  C \|e(u)\|_{L^2(\O^*_{\e})}.
\end{aligned}
	\end{equation}
\end{lem}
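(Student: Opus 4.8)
The strategy is to reduce the three estimates in \eqref{smalldomain} to a single per-cell inequality obtained from the Korn–Wirtinger inequality on the fixed reference matrix $\Y^*$, and then to sum (or unfold) over the cells $\xi\in\Xi_\e$. The key scaling observation is that if $v(y)=u(\e\xi+\e y)$ for $y\in\Y^*$, then $e_y(v)(y)=\e\,e(u)(\e\xi+\e y)$, so that $\|e_y(v)\|_{L^2(\Y^*)}^2=\e^{-1}\|e(u)\|_{L^2(\e\xi+\e\Y^*)}^2$; similarly $\|v\|_{L^2(\Y^*)}^2=\e^{-3}\|u\|_{L^2(\e\xi+\e\Y^*)}^2$ and $\|\nabla_y v\|_{L^2(\Y^*)}^2=\e^{-1}\|\nabla u\|_{L^2(\e\xi+\e\Y^*)}^2$. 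Since $\Y^*$ is a bounded Lipschitz domain satisfying the Korn–Wirtinger inequality \eqref{Eq. 37}, there is a rigid displacement $\rho$ (depending on $v$, hence on $\xi$) with $\|v-\rho\|_{H^1(\Y^*)}\le C\|e_y(v)\|_{L^2(\Y^*)}$. Rescaling $\rho$ back to the cell $\e\xi+\e\Y$ produces exactly a map of the form $a+b\wedge(\e\{x/\e\})$, i.e. an element of ${\cal R}$ in that cell. Choosing $r_u(\e\xi,\cdot)$ to be the $L^2$-projection only improves the constant, so we may work with this $\rho$.

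For the first line of \eqref{smalldomain}: from the rescaled Korn–Wirtinger inequality on each cell we get $\|u-r_u\|_{L^2(\e\xi+\e\Y^*)}^2\le C\e^2\|e(u)\|_{L^2(\e\xi+\e\Y^*)}^2$ (the $\e^2$ coming from $\e^3$ in the $L^2$-scaling against $\e$ in the strain-scaling, times the dimensionless Korn constant) and $\e^2\|\nabla(u-r_u)\|_{L^2(\e\xi+\e\Y^*)}^2\le C\e^2\|e(u)\|_{L^2(\e\xi+\e\Y^*)}^2$. Summing over $\xi\in\Xi_\e$ and using that the cells $\e\xi+\e\Y^*$, $\xi\in\Xi_\e$, are disjoint and contained in $\O^*_\e$ gives the claimed bound with $\|e(u)\|_{L^2(\O^*_\e)}^2$ on the right. (Equivalently, one phrases this directly with the unfolding operator $\Tes$ and Proposition \ref{prop4.1}: $\Tes(u)(\e\xi,\cdot)-\rho$ is controlled in $H^1(\Y^*)$ by $\|e_y\Tes(u)(\e\xi,\cdot)\|_{L^2(\Y^*)}$, then integrate in $x$.) For the second and third lines: on the reference cell, the trace/jump on $S$ of $v-\rho$ is controlled by $\|v-\rho\|_{H^1(\Y^*)}$, hence by $\|e_y(v)\|_{L^2(\Y^*)}$, via the trace inequality \eqref{TR} and the jump estimate \eqref{JumpS-bis} from Corollary \ref{unfoldet_trace}; note $r_u(\e\xi,\cdot)$ is a rigid displacement so $[r_u]_{S_\e}=0$ and $(u-r_u)$ and $u$ have the same jump, while their traces on $S_\e$ differ by the (bounded) trace of the rigid displacement, which is why it is $u-r_u$, not $u$, that appears. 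Rescaling $S\mapsto\e\xi+\e S$ contributes $\e^2$ to the surface $L^2$-measure (two dimensions) against $\e^{-1}$ from the strain, giving $\e^{1/2}$ per cell in the $L^2(S_\e)$ norm; for the $H^{1/2}$-seminorm the extra homogeneity-$(2+2\alpha)$ denominator with $\alpha=1/2$ exactly cancels the measure scaling, leaving an $\e$-independent constant — this is precisely the computation already carried out in Lemma \ref{lem46} and Corollary \ref{unfoldet_trace}, applied here to $v=\Tes(u)(\e\xi,\cdot)-\rho$ rather than to $u$ itself. Squaring, summing over $\xi$, and taking square roots yields the two surface estimates.

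The main obstacle — really the only delicate point — is the bookkeeping of the $\e$-powers across the three different scalings (volume, gradient, surface) and making sure the rigid-displacement correction is handled consistently: the same $r_u(\e\xi,\cdot)$ must simultaneously make the $H^1$-defect, the $L^2(S_\e)$-defect, and the $H^{1/2}(S_\e)$-defect small, which works because all three reference-cell inequalities are driven by the single quantity $\|e_y(v)\|_{L^2(\Y^*)}$ and the Korn–Wirtinger rigid displacement is, up to equivalence of norms, the $L^2$-projection onto ${\cal R}$. One should also remark that the statement is written for $u\in H^1(\O_\e)^3$ (no boundary condition), so no Poincaré-type argument on $\Gamma$ is needed here; the $\Gamma$-condition will only enter later when one wants to remove the rigid displacements globally. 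All constants are independent of $\e$ since every reference-cell constant is a constant of the fixed domain $\Y^*$ and the crack $S$.
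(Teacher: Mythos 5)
Your proof is correct and follows essentially the same route as the paper: a per-cell Korn--Wirtinger inequality on the reference cell $\Y^*$ (the paper simply invokes \eqref{Eq. 38}, valid for the $L^2$-projection $r_u$, after $\e$-scaling, rather than rederiving it from \eqref{Eq. 37} plus equivalence of norms on ${\cal R}$), summation over $\xi\in\Xi_\e$, and then the trace/jump scalings on $S$ exactly as in Lemma \ref{lem46} and \eqref{JumpS-bis} to get the $L^2(S_\e)$ and $H^{1/2}(S_\e)$ bounds. Your $\e$-power bookkeeping matches the stated exponents, so no changes are needed.
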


\begin{proof} Applying \eqref{Eq. 38} (after $\e$-scaling) gives
	\begin{equation*}
	\|u-r _u\|^2_{L^{2}(\e\xi+\e {\Y}^*)}+ \e^2 \|\nabla (u-r _u)\|^2_{L^{2} (\e\xi+\e {\Y}^*)} \leq  C \e^2 \|e(u)\|^2_{L^2(\e\xi+\e {\Y})}.
	\end{equation*}
	Then adding the above inequalities  (with respect to $\xi$) yields  \eqref{smalldomain}$_1$. Then one obtains \eqref{smalldomain}$_{2,3}$.
\end{proof}

Next proposition provides the Korn inequality in terms of jumps and the estimate for jumps instead of traces.

\begin{lem}\label{prunk} For every displacement  $u$ in $H^{1}_\Gamma(\Omega^*_\e)^3$, one has
		\begin{equation}
		\label{eq315}
		\begin{aligned}
		&\|u\|_{H^1(\O^*_\e)}\leq C \| e(u) \|_{L^2 (\O^*_{\e})},\\
		&\|[u]_{S_\e}\|_{H^{1/2}(S_\e)}\leq C_0 \sqrt\e \|e(u)\|_{L^2(\O^*_\e)}.
		\end{aligned}
		\end{equation}	
	The constant does not depend on $\e$.
\end{lem}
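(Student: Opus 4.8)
The plan is to combine the local Korn--Wirtinger estimates of Lemma \ref{prop5.1} with a global control of the piecewise-rigid approximation $r_u$, and then to transfer the estimate on the difference $u - r_u$ to an estimate on $u$ itself by absorbing the rigid part. First I would recall that the piecewise rigid displacement $r_u$, defined cell by cell as the orthogonal projection of $u|_{\e\xi+\e\Y}$ onto $\mathcal{R}$, is encoded by the two vector fields $a_u(\e\xi)$ and $b_u(\e\xi)$. The key point is that Lemma \ref{prop5.1}$_1$ controls the \emph{oscillation} of $u$ around $r_u$ in each cell, but not the size of $r_u$. To close the estimate one needs a discrete Korn / Poincar\'e inequality for the macroscopic fields $a_u$, $b_u$: the jumps of $r_u$ across adjacent cell interfaces are controlled by $\|u - r_u\|$ on those interfaces (hence by $\e\|e(u)\|_{L^2(\O^*_\e)}$ via the trace inequality \eqref{TR} applied after $\e$-scaling), so $a_u$ and $b_u$ behave like a discrete $H^1$ function whose discrete gradient is $L^2$-small; since $u$ (hence essentially $r_u$) vanishes on $\Gamma$, a discrete Poincar\'e inequality on the connected union $\widehat\O_\e$ pins down $\|a_u\|_{L^2}$ and $\|b_u\|_{L^2}$ by $C\|e(u)\|_{L^2(\O^*_\e)}$. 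This is exactly the mechanism used in the unfolding-Korn literature (\cite{cdg1}, \cite{cddgz}, \cite{GMO}), so I would invoke it, taking care that the presence of the cracks $S_\e$ does not disconnect $\widehat\O_\e$ (true by hypothesis, since $\Y^* = \Y\setminus S$ is connected) and that the boundary layer $\Lambda_\e$ is handled by the usual argument (a Korn inequality on $\O^*_\e$ directly near $\partial\O$, or an extension).

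Once $\|a_u\|_{L^2(\O)} + \|b_u\|_{L^2(\O)} \le C\|e(u)\|_{L^2(\O^*_\e)}$ is established, I get $\|r_u\|_{L^2(\widehat\O^*_\e)} \le C\|e(u)\|_{L^2(\O^*_\e)}$ because on each cell $r_u$ is an affine function with coefficients of size $\e$ in the rotational part, and summing over $\xi$ the $L^2$ norms reproduces the $\ell^2$ norms of $a_u,b_u$ up to bounded factors. Likewise $\|\nabla r_u\|_{L^2(\widehat\O^*_\e)} = \|\nabla(b_u\land\,\cdot\,)\|_{L^2} \le C\|b_u\|_{\ell^2} \le C\|e(u)\|_{L^2(\O^*_\e)}$, with no loss of $\e$-power here since $\nabla$ of the linear part is $O(1)$. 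Combining with Lemma \ref{prop5.1}$_1$, which gives $\|u - r_u\|_{L^2(\widehat\O^*_\e)} \le C\e\|e(u)\|_{L^2(\O^*_\e)}$ and $\|\nabla(u-r_u)\|_{L^2(\widehat\O^*_\e)} \le C\|e(u)\|_{L^2(\O^*_\e)}$, the triangle inequality yields $\|u\|_{H^1(\widehat\O^*_\e)} \le C\|e(u)\|_{L^2(\O^*_\e)}$; on the remaining layer $\O^*_\e\cap\Lambda_\e$ I would use \eqref{Eq. 38} (or the version with $\Gamma$) on a fixed neighbourhood of the boundary to dispose of the leftover. That proves \eqref{eq315}$_1$.

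For the jump estimate \eqref{eq315}$_2$, the point is that rigid displacements are continuous across $S$, so $[r_u]_{S_\e} = 0$ cell by cell, hence $[u]_{S_\e} = [u - r_u]_{S_\e}$. Therefore the last two inequalities of Lemma \ref{prop5.1} apply verbatim to $u$: $\|[u]_{S_\e}\|_{L^2(S_\e)} = \|[u-r_u]_{S_\e}\|_{L^2(S_\e)} \le C\sqrt\e\,\|e(u)\|_{L^2(\O^*_\e)}$ and $\|[u]_{S_\e}\|'_{H^{1/2}(S_\e)} = \|[u-r_u]_{S_\e}\|'_{H^{1/2}(S_\e)} \le C\|e(u)\|_{L^2(\O^*_\e)}$ (here one uses that the jump operator kills $\mathcal{R}$, together with the trace estimate \eqref{JumpS-bis} applied cellwise to $v = \Tes(u)(\e\xi,\cdot)$, exactly as in Corollary \ref{unfoldet_trace}). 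Then plugging these two bounds into the definition \eqref{NH1/2} of the scaled Sobolev--Slobodetsky norm on $S_\e$,
\begin{equation*}
\|[u]_{S_\e}\|_{H^{1/2}(S_\e)}^2 = \|[u]_{S_\e}\|_{L^2(S_\e)}^2 + \e\,\big(\|[u]_{S_\e}\|'_{H^{1/2}(S_\e)}\big)^2 \le C\e\,\|e(u)\|_{L^2(\O^*_\e)}^2,
\end{equation*}
which is \eqref{eq315}$_2$. The main obstacle, and the only genuinely nontrivial part, is the first step: proving the discrete Korn/Poincar\'e estimate for the macroscopic fields $a_u,b_u$ with a constant independent of $\e$, i.e. controlling the global size of the piecewise-rigid approximation from the interface jumps of $r_u$ and the Dirichlet condition on $\Gamma$ — everything else is assembly via the triangle inequality and the already-established scaling lemmas.
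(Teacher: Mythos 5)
Your proposal is correct and follows essentially the same route as the paper: the Korn inequality \eqref{eq315}$_1$ is obtained by the discrete Korn/Poincar\'e mechanism for the cellwise rigid approximations, which is exactly the argument of \cite{GMO} that the paper invokes, and the jump bound \eqref{eq315}$_2$ rests on the fact that jumps annihilate rigid displacements together with the cellwise Korn/trace estimates and the $\e$-scaled norm \eqref{NH1/2}. The only cosmetic difference is that the paper packages the second step as the already-proved estimate \eqref{JumpS} of Corollary \ref{unfoldet_trace} (via the unfolding operator), whereas you re-derive the same bound directly from Lemma \ref{prop5.1}; the content is identical.
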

\begin{proof} Proceeding as in  \cite{GMO}, we obtain the existence of a constant independent of $\e$ such that \eqref{eq315}$_{1}$ is satisfied. Then \eqref{eq315}$_{2}$  is given by \eqref{JumpS}.
\end{proof}

\section{The  unfolding operator from $H^{-\alpha}(S_{\e})$ into $L^2(\O; H^{-\alpha}(S))$, $\alpha\in (0,1)$}
\begin{definition} For $p\in[1,+\infty]$, the averaging boundary operator ${\cal U}_\e^b\;:\; L^p(\O\times S)\longmapsto L^p(S_\e)$ is defined by
	$$
\forall\Phi\in L^p(\O\times S),\qquad 	{\cal U}^b_\e(\Phi)(x)=\left\{
	\begin{aligned}
	&\int_Y \Phi\Big(\e\Big[{x\over \e}\Big]+\e z, \Big\{{x\over \e}\Big\}\Big)dz\qquad \hbox{for a.e. } x\in \widehat{\O}_\e,\\
	&0\hskip 50mm \hbox{for a.e. } x\in \Lambda_\e.
	\end{aligned}\right.
	$$
\end{definition}
\subsubsection{Some properties involving ${\cal U}^b_\e$ and ${\cal T}^b_\e$}
Let $p$ be in $[1,+\infty]$,  $p'$ its conjugate ($1/p+1/p'=1$)
\begin{itemize}
	\item $\forall\Phi\in L^p(\O\times S),\;\; \forall \psi\in L^{p'}(S_\e)$, $\displaystyle \int_{S_\e} \psi\,{\cal U}^b_\e(\Phi)\,d\sigma={1\over \e}\int_{\O\times S}{\cal T}^b_\e(\psi)\Phi\,dx d\sigma_y$,
	\item ${\cal U}^b_\e$ is almost a right inverse of ${\cal T}^b_\e$
	\begin{equation}\label{Prop1}
	\forall\Phi\in L^p(\O\times S),\quad {\cal T}^b_\e\circ{\cal U}^b_\e(\Phi)(x,y)=\left\{
	\begin{aligned}
	&\int_Y \Phi\Big(\e\Big[{x\over \e}\Big]+\e z, y\Big)dz\qquad \hbox{for a.e. for } (x,y)\in \widehat{\O}_\e\times S,\\
	&0\hskip 43mm \hbox{for a.e.  } (x,y)\in \Lambda_\e\times S.
	\end{aligned}\right.
	\end{equation}
	\item ${\cal U}^b_\e$ is a left inverse of ${\cal T}^b_\e$
	\begin{equation}\label{Prop2}
	\forall\phi\in L^p(S_\e),\qquad {\cal U}^b_\e\circ{\cal T}^b_\e(\phi)=\phi,
	\end{equation}
	\item for every $\Phi$ in $L^2(\O; H^\alpha(S))$ one has ${\cal U}^b_\e(\Phi)\in H^\alpha(S_\e)$.
\end{itemize}
For every $\alpha\in (0,1)$,  denote $H^{-\alpha}( S_\e)$ (resp. $\widetilde{H}^{-\alpha}(S_\e)$) the dual space of $H^{\alpha}(S_\e)$ (resp. $\widetilde{H}^{\alpha}(S_\e)$) equipped with the dual norm.
\vskip 2mm

Now, for $g\in H^{-\alpha}( S_\e)$ (resp. $\widetilde{H}^{-\alpha}(S_\e)$), $\alpha\in (0,1)$, one defines by duality ${\cal T}^b_\e(g)\in L^2(\O; H^{-\alpha}(S ))$ (resp. ${\cal T}^b_\e(g)\in L^2(\O; \widetilde{H}^{-\alpha}(S))$) as
$$
\begin{aligned}
\big\langle{\cal T}^b_\e(g), \Phi\big\rangle_{L^2(\O ; H^{-\alpha}(S)),L^2(\O ; H^{\alpha}(S))} &\doteq\e \big\langle g, {\cal U}^b_\e(\Phi)\big\rangle_{H^{-\alpha}(S_\e), H^\alpha( S_\e)}\qquad \forall \Phi\in L^2(\O; H^{\alpha}(S)),\\
\hbox{(resp.}\;\; \big\langle{\cal T}^b_\e(g), \Phi\big\rangle_{L^2(\O ; \widetilde{H}^{-\alpha}(S)),L^2(\O ; \widetilde{H}^{\alpha}(S))}& \doteq\e \big\langle g, {\cal U}^b_\e(\Phi)\big\rangle_{\widetilde{H}^{-\alpha}(S_\e), \widetilde{H}^{\alpha}(S_\e)}\qquad \forall \Phi\in L^2(\O; \widetilde{H}^{\alpha}(S))\hbox{)}
\end{aligned}
$$ Observe that
\begin{equation}\label{EQ44}
\begin{aligned}
&\big\langle{\cal T}^b_\e(g), {\cal T}^b_\e(\phi)\big\rangle_{L^2(\O ; H^{-\alpha}(S)),L^2(\O ; H^{\alpha}(S))} = \e \big\langle g, \phi\big\rangle_{H^{-\alpha}(S_\e), H^\alpha(S_\e)}\hskip 7mm \forall \phi\in H^\alpha(S_\e),\\
\hbox{(resp.}\;\; &\big\langle{\cal T}^b_\e(g), {\cal T}^b_\e(\phi)\big\rangle_{L^2(\O ; \widetilde{H}^{-\alpha}(S)),L^2(\O ; \widetilde{H}^{\alpha}(S))} = \e \big\langle g, \phi\big\rangle_{\widetilde{H}^{-\alpha}(S_\e), \widetilde{H}^\alpha(S_\e)}\qquad \forall \phi\in \widetilde{H}^\alpha(S_\e)\hbox{)}
\end{aligned}
\end{equation} since here
$$
\begin{aligned}
&\forall \phi\in H^\alpha(S_\e),\qquad {\cal U}^b_\e\circ \Tes(\phi)=\phi,\\
\hbox{(resp.}\;\; &\forall \phi\in \widetilde{H}^\alpha(S_\e),\qquad {\cal U}^b_\e\circ \Tes(\phi)=\phi\hbox{)}
\end{aligned}
$$

\begin{lem}\label{lem48} For every   $\alpha\in (0,1)$, one has
\begin{equation}\label{EQ35}
\begin{aligned}
\|{\cal T}^b_\e(\phi)\|_{L^2(\O; H^{-\alpha}(S))}=\e^{1/2}\|\phi\|_{H^{-\alpha}(S_\e)},\qquad \forall \phi\in H^{-\alpha}(S_\e),\\
\|{\cal T}^b_\e(\phi)\|_{L^2(\O; \widetilde{H}^{-\alpha}(S))}=\e^{1/2}\|\phi\|_{\widetilde{H}^{-\alpha}(S_\e)},\qquad \forall \phi\in \widetilde{H}^{-\alpha}(S_\e).
\end{aligned}
\end{equation}
\end{lem}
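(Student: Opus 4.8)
The plan is to prove the two identities in \eqref{EQ35} by a duality argument, bootstrapping from the already-established isometry on the primal spaces. Concretely, for fixed $\e$ and $\alpha\in(0,1)$, I would compute the dual norm of ${\cal T}^b_\e(\phi)$ directly from its definition:
\begin{equation*}
\|{\cal T}^b_\e(\phi)\|_{L^2(\O;H^{-\alpha}(S))}=\sup_{\Phi\in L^2(\O;H^\alpha(S)),\;\Phi\ne 0}\frac{\big\langle{\cal T}^b_\e(\phi),\Phi\big\rangle_{L^2(\O;H^{-\alpha}(S)),L^2(\O;H^\alpha(S))}}{\|\Phi\|_{L^2(\O;H^\alpha(S))}}=\sup_{\Phi\ne 0}\frac{\e\,\big\langle\phi,{\cal U}^b_\e(\Phi)\big\rangle_{H^{-\alpha}(S_\e),H^\alpha(S_\e)}}{\|\Phi\|_{L^2(\O;H^\alpha(S))}},
\end{equation*}
using the defining relation for ${\cal T}^b_\e(g)$ on negative spaces. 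The strategy is then to show that as $\Phi$ ranges over (a suitable subset of) $L^2(\O;H^\alpha(S))$, the test function ${\cal U}^b_\e(\Phi)$ ranges over all of $H^\alpha(S_\e)$, and that the ratio of norms is controlled exactly by a factor $\e^{-1/2}$, which upon combining with the prefactor $\e$ gives $\e^{1/2}$.

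The two inequalities constituting the equality would be handled separately. For the lower bound $\|{\cal T}^b_\e(\phi)\|_{L^2(\O;H^{-\alpha}(S))}\ge \e^{1/2}\|\phi\|_{H^{-\alpha}(S_\e)}$: given any $\psi\in H^\alpha(S_\e)$, take $\Phi={\cal T}^b_\e(\psi)\in L^2(\O;H^\alpha(S))$; then by \eqref{EQ44} the numerator equals $\e\langle\phi,\psi\rangle_{H^{-\alpha}(S_\e),H^\alpha(S_\e)}$, while by Corollary~\ref{unfoldet_trace} (and Proposition~\ref{prop3.3} for the $L^2$ part, giving $\|{\cal T}^b_\e(\psi)\|_{L^2(\O;H^\alpha(S))}=\e^{1/2}\|\psi\|_{H^\alpha(S_\e)}$) the denominator equals $\e^{1/2}\|\psi\|_{H^\alpha(S_\e)}$; taking the supremum over $\psi$ and using the definition of the dual norm on $H^{-\alpha}(S_\e)$ yields the bound. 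For the upper bound: for arbitrary $\Phi\in L^2(\O;H^\alpha(S))$, set $\psi={\cal U}^b_\e(\Phi)$, which lies in $H^\alpha(S_\e)$ by the last bulleted property before Lemma~\ref{lem48}; the numerator is then $\e\langle\phi,\psi\rangle\le \e\|\phi\|_{H^{-\alpha}(S_\e)}\|\psi\|_{H^\alpha(S_\e)}$, and it remains to show $\|{\cal U}^b_\e(\Phi)\|_{H^\alpha(S_\e)}\le \e^{-1/2}\|\Phi\|_{L^2(\O;H^\alpha(S))}$, i.e. that ${\cal U}^b_\e$ is (up to the scaling factor) a contraction on these norms. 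The $\widetilde H^{-\alpha}$ version is verbatim the same argument with tildes throughout, since all the cited properties of ${\cal U}^b_\e$ and ${\cal T}^b_\e$ hold in the $\widetilde H^\alpha$ setting as recorded in \eqref{EQ44}.

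The main obstacle I expect is precisely the norm bound $\|{\cal U}^b_\e(\Phi)\|_{H^\alpha(S_\e)}\le \e^{-1/2}\|\Phi\|_{L^2(\O;H^\alpha(S))}$ needed for the upper bound, since ${\cal U}^b_\e$ involves an average over $z\in Y$ of $\Phi(\e[x/\e]+\e z,\{x/\e\})$ and one must check that Jensen's inequality controls both the $L^2(S_\e)$ part and the fractional $H^\alpha(S_\e)$-seminorm part without loss of constant; for the seminorm this uses that ${\cal U}^b_\e(\Phi)$ restricted to the cell $\e\xi+\e S$ is $z$-averaged from $\Phi(\e\xi+\e z,\cdot)$ restricted to $S$, so that $\|{\cal U}^b_\e(\Phi)\|'^2_{H^\alpha(\e\xi+\e S)}\le \int_Y \|\Phi(\e\xi+\e z,\cdot)\|'^2_{H^\alpha(\e\xi+\e S)}\,dz$, after which the $\e$-rescaling of the seminorm (as in Lemma~\ref{lem46}) and summation over $\xi\in\Xi_\e$ produce the factor $\e^{-1/2}$ against $\|\Phi\|_{L^2(\O;H^\alpha(S))}$. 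A clean alternative that avoids re-deriving this: observe that since ${\cal U}^b_\e\circ{\cal T}^b_\e=\mathrm{id}$ on $H^\alpha(S_\e)$ (equation \eqref{Prop2}), the operator $\e^{-1/2}{\cal T}^b_\e$ is an isometric embedding of $H^\alpha(S_\e)$ into $L^2(\O;H^\alpha(S))$ with a bounded left inverse, so its adjoint $\e^{1/2}\big({\cal T}^b_\e\big)^*$ is a quotient map onto $H^{-\alpha}(S_\e)$; the defining relation above identifies ${\cal T}^b_\e(\phi)$ (negative-order) with $\e$ times this adjoint applied to $\phi$, and an elementary Hilbert-space lemma — the adjoint of an isometry is norm-nonincreasing, and is isometric on the range — combined with the lower bound already shown pins the norm down to exactly $\e^{1/2}\|\phi\|_{H^{-\alpha}(S_\e)}$. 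Either route closes the proof; I would present the direct duality computation as the main line and mention the structural viewpoint as a remark.
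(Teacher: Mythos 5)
Your proposal is correct and follows essentially the same route as the paper: both directions of the equality are obtained by duality, with the upper bound resting on the Jensen-type estimate $\|{\cal U}^b_\e(\Phi)\|_{H^\alpha(S_\e)}\le \e^{-1/2}\|\Phi\|_{L^2(\O;H^\alpha(S))}$ (the paper gets this by combining \eqref{Prop1} with the exact scalings of Propositions \ref{prop3.3} and \ref{lem46}, i.e. the same averaging argument you sketch directly), and the lower bound on the left-inverse property \eqref{Prop2}, which is exactly your choice of test functions $\Phi={\cal T}^b_\e(\psi)$ together with $\|{\cal T}^b_\e(\psi)\|_{L^2(\O;H^\alpha(S))}=\e^{1/2}\|\psi\|_{H^\alpha(S_\e)}$. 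The only differences are cosmetic (your adjoint-of-isometry remark is an optional repackaging), so the argument matches the paper's proof.
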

\begin{proof} One proves \eqref{EQ35}$_1$, the proof of \eqref{EQ35}$_2$ is obtained following the same lines. One has
	$$\forall \Phi\in L^2(\O;  H^{\alpha}(S)),\qquad  \big\langle{\cal T}^b_\e(g) , \Phi\big\rangle_{L^2(\O ; H^{-\alpha}(S)),L^2(\O ; H^{\alpha}(S))} \leq \e \|g\|_{H^{-\alpha}(S_\e)}\|{\cal U}^b_\e(\Phi)\|_{H^\alpha(S_\e)}.$$ Then, Property \eqref{Prop1} and Propositions \ref{prop3.3}-\ref{lem46} lead to
	$$
	\begin{aligned}
	&\|{\cal U}^b_\e(\Phi)\|_{L^2(S_\e)}=\e^{-1/2}\|{\cal T}^b_\e\circ {\cal U}^b_\e(\Phi)\|_{L^2(\O\times  )}\leq \e^{-1/2}\|\Phi\|_{L^2(\O\times S )},\\
	\e^{\alpha}&\|{\cal U}^b_\e(\Phi)\|'_{H^\alpha(S_\e)}=\e^{-1/2}\|{\cal T}^b_\e\circ {\cal U}^b_\e(\Phi)\|'_{L^2(\O; H^\alpha(S))}\leq \e^{-1/2}\|\Phi\|'_{L^2(\O; H^\alpha(S))}.
	\end{aligned}
	$$ Hence 
$$
\forall \Phi\in L^2(\O;  H^{\alpha}(S)),\qquad  \big\langle{\cal T}^b_\e(g) , \Phi\big\rangle_{L^2(\O ; H^{-\alpha}(S)),L^2(\O ; H^{\alpha}(S))} \leq \e^{1/2} \|g\|_{H^{-\alpha}(S_\e)}\|\Phi\|_{L^2(\O; H^\alpha(S))}.
$$ which yields $\|{\cal T}^b_\e(g)\|_{L^2(\O; H^{-\alpha}(S))}\leq \e^{1/2}\|g\|_{H^{-\alpha}(S_\e)}$. 
\smallskip
	
	Now, due to the Property \eqref{Prop2}, for every $\phi\in H^\alpha(S_\e)$ one has
	$$\big\langle g, \phi\big\rangle_{H^{-\alpha}(S_\e), H^\alpha(S_\e)}={1\over \e}\big\langle{\cal T}^b_\e(g), {\cal T}^b_\e(\phi)\big\rangle_{L^2(\O ; H^{-\alpha}(S)),L^2(\O ; H^{\alpha}(S))}.$$ So
	$$\big\langle g, \phi\big\rangle_{H^{-\alpha}(S_\e), H^\alpha(S_\e)}\leq {1\over \e}\|{\cal T}^b_\e(g)\|_{L^2(\O; H^{-\alpha}(S))}\| {\cal T}^b_\e(\phi)\|_{L^2(\O; H^{\alpha}(S))}.$$ The estimates in Propositions \ref{prop3.3}-\ref{lem46} yield $\| {\cal T}^b_\e(\phi)\|_{L^2(\O; H^{\alpha}(S))}\leq \e^{1/2}\|\phi\|_{H^\alpha(S_\e)}$, hence 
$$\|g\|_{H^{-\alpha}(S_\e)}\leq \e^{-1/2}\|{\cal T}^b_\e(g)\|_{L^2(\O; H^{-\alpha}(S))}.$$
The lemma is proved.
\end{proof}

\begin{lem}\label{lem-div}
Let  $h_\e$ be in $L^2({\cal S}_\e)$ and $v_\e$ be a field in $L^2({\cal S}_\e)^3$ such that
\begin{equation}
\hbox{div}\,(v_\e)=h_\e \qquad \hbox{in}\;\quad {\cal D}'({\cal S}_\e).
\end{equation}
One has
\begin{equation}\label{EstSigma}
\|v_\e\cdot \nu_\e\|_{H^{-1/2}(\partial {\cal S}_\e)}\leq C\big(\e\|h_\e\|_{L^2({\cal S}_\e)}+\|v_\e\|_{L^2({\cal S}_\e)}\big).
\end{equation}
The constant does not depend on $\e$.
\end{lem}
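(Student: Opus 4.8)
The plan is to reduce the estimate to a single reference cell by unfolding, prove the corresponding inequality on the fixed domain ${\cal S}$, and then rescale. Concretely, I would first use the periodic structure: the cracked domain ${\cal S}_\e$ is a disjoint union, over $\xi\in\Xi_\e$, of the scaled copies $\e\xi+\e{\cal S}$ (plus boundary layers, which carry nothing since $v_\e\cdot\nu_\e$ is defined only on $\partial{\cal S}_\e\subset\widehat\O_\e$). On each such copy, write $v_\e(x)=\widehat v(\{x/\e\})$ and $h_\e(x)=\widehat h(\{x/\e\})$ with $x=\e\xi+\e y$, $y\in{\cal S}$, i.e. pass to $\Tes(v_\e)(\e\xi,\cdot)$ and $\Tes(h_\e)(\e\xi,\cdot)$ on the fixed domain ${\cal S}$. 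Under the change of variables $y\mapsto x=\e\xi+\e y$ the divergence equation becomes $\div_y \widehat v=\e\,\widehat h$ in ${\cal D}'({\cal S})$.

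The key local lemma is: for any bounded domain ${\cal D}$ with ${\cal C}^{1,1}$ boundary, if $w\in L^2({\cal D})^3$ and $\div w=g\in L^2({\cal D})$ in ${\cal D}'({\cal D})$, then the normal trace $w\cdot\nu$ is well defined in $H^{-1/2}(\partial{\cal D})$ and $\|w\cdot\nu\|_{H^{-1/2}(\partial{\cal D})}\le C(\|g\|_{L^2({\cal D})}+\|w\|_{L^2({\cal D})})$. This is the standard normal-trace theorem for $H(\div)$ spaces: for $\varphi\in H^{1/2}(\partial{\cal D})$ pick a lifting $\Phi\in H^1({\cal D})$ with $\|\Phi\|_{H^1({\cal D})}\le C\|\varphi\|_{H^{1/2}(\partial{\cal D})}$, define $\langle w\cdot\nu,\varphi\rangle\doteq\int_{\cal D} w\cdot\nabla\Phi\,dx+\int_{\cal D} g\,\Phi\,dx$, check it is independent of the lifting (using $\div w=g$ and density of ${\cal C}^\infty_c({\cal D})$), and bound it by $(\|w\|_{L^2}+\|g\|_{L^2})\|\Phi\|_{H^1}\le C(\|w\|_{L^2}+\|g\|_{L^2})\|\varphi\|_{H^{1/2}(\partial{\cal D})}$. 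Applying this with ${\cal D}={\cal S}$, $w=\widehat v=\Tes(v_\e)(\e\xi,\cdot)$, $g=\e\,\widehat h$ gives
$$
\|\widehat v\cdot\nu\|_{H^{-1/2}(\partial{\cal S})}\le C\big(\e\|\widehat h\|_{L^2({\cal S})}+\|\widehat v\|_{L^2({\cal S})}\big),
$$
with $C$ depending only on ${\cal S}$.

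Finally I would undo the unfolding and sum over cells. Squaring, summing over $\xi\in\Xi_\e$, and using the rescaling identities of Proposition \ref{prop4.1} (for the $L^2$ volume norms of $v_\e$ and $h_\e$ one gets a factor $\e^{3}$ per cell, i.e. $\|\Tes(v_\e)(\e\xi,\cdot)\|_{L^2({\cal S})}^2=\e^{-3}\|v_\e\|_{L^2(\e\xi+\e{\cal S})}^2$) together with the analogue of Lemma \ref{lem48} and Corollary \ref{unfoldet_trace} for the $H^{-1/2}$ norm on the boundary manifold $\partial{\cal S}_\e$ (the surface unfolding $\tbe$ scales the $H^{-1/2}(\partial{\cal S}_\e)$ dual norm by $\e^{1/2}$, up to an $\e$-power from the metric rescaling on a codimension-one manifold). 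Collecting the powers of $\e$ shows the cell-wise constants combine into a single $\e$-independent constant and the homogeneous-degree-one scaling of the $\e h_\e$ term matches exactly, yielding \eqref{EstSigma}.

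The main obstacle is bookkeeping the powers of $\e$ consistently across objects of different dimension: the normal trace lives on the $2$-dimensional manifold $\partial{\cal S}_\e$ while $v_\e,h_\e$ live on the $3$-dimensional bulk ${\cal S}_\e$, and the fractional dual norm $H^{-1/2}$ rescales differently from $L^2$. One must be careful that the factor $\e$ multiplying $h_\e$ in \eqref{EstSigma} is exactly the one produced by the change of variables in $\div_y\widehat v=\e\widehat h$, and that after summing over the $O(\e^{-3})$ cells all extraneous $\e$-powers cancel; a clean way to organize this is to phrase everything through $\tbe$, $\cT^b_\e$ and $\Tes$ and invoke the already-proved scaling lemmas rather than recomputing Jacobians by hand. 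A minor additional point is to confirm that the boundary layers $\Lambda_\e$ do not contribute, which is immediate since $\partial{\cal S}_\e$ and hence the normal trace are supported in $\widehat\O_\e$.
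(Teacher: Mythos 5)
Your proposal is correct and follows essentially the same route as the paper: unfold cell by cell so that the equation becomes $\hbox{div}_y\,(\Tes(v_\e))=\e\,\Tes(h_\e)$ in ${\cal D}'(\O\times{\cal S})$, apply the standard $H(\mathrm{div})$ normal-trace estimate on the fixed domain ${\cal S}$ for each $\xi\in\Xi_\e$, then sum over the cells and return to the $\e$-dependent norms via the unfolding scaling identities (Proposition \ref{prop4.1} together with the $\e^{1/2}$ boundary-duality scaling as in Lemma \ref{lem48}). The $\e$-bookkeeping you single out as the main obstacle is treated in the paper exactly as you suggest, by invoking these already-proved scaling lemmas rather than recomputing Jacobians cell by cell.
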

\begin{proof} Due to the hypothesis of the lemma, one has for every $\xi\in \Xi_\e$
$${\cal T}^b_\e(v_\e)(\e\xi,\cdot)\cdot\nu\in H^{-1/2}(\partial {\cal S}),\qquad \hbox{div}_y\,(\Tes(v_\e))=\e \Tes(h_\e) \qquad \hbox{in}\;\quad {\cal D}'(\O\times {\cal S})$$ and the estimate
$$\|{\cal T}^b_\e(v_\e)(\e\xi,\cdot)\cdot\nu\|^2_{H^{-1/2}(\partial{\cal S})}\leq C\big(\e^2\|{\cal T}_\e(h_\e)(\e\xi,\cdot)\|^2_{L^2( {\cal S})}+\|{\cal T}_\e(v_\e)(\e\xi,\cdot)\|^2_{L^2( {\cal S})}\big).$$
Adding  the above inequalities for $\xi\in\Xi_\e$ yields
$$ \|{\cal T}^b_\e(v_\e)\cdot\nu\|^2_{L^2(\O;H^{-1/2}(\partial{\cal S}))}\leq C\big(\e^2\|\Tes(h_\e)\|^2_{L^2(\O\times {\cal S})}+\|\Tes(v_\e)\|^2_{L^2(\O\times {\cal S})}\big)$$
which in turn with  Proposition \ref{prop4.1} give  \eqref{EstSigma}.
\end{proof}

\section{Statement of the contact $\e$-problem with Coulomb's friction on periodic cracks}

Assume that one has a given symmetric bilinear form on $H^1_{\Gamma}(\O^*_\e)^3$
$$
{\mathbf a}^{\e}({u},{v})\doteq \int_{\O^*_\e } a^{\e}_{ijkl}(x) e_{ij}(u)(x)\,e_{kl}(v)(x)\,dx,\qquad e_{ij}(u)={1\over 2}\Big({\partial u_i\over \partial x_j}+{\partial u_j\over \partial x_j}\Big),\quad (i,j)\in \{1,2,3\}^2
$$
where the tensor field $a^{\e}=(a^{\e}_{ijkl})$, $a^{\e}_{ijkl}\in L^\infty(\O^*_\e)$,  has the usual
properties of symmetry, boundedness  and coercivity  when operating on symmetric $3\times 3$ matrices
$$ 
a^{\e}_{ijkl}=a^{\e}_{jikl}=a^{\e}_{ klij},
\qquad \overline \alpha \;\eta_{ij}\eta_{ij}\leq a^{\e}_{ijkl}\, \eta_{ij} \eta_{kl}\leq C_A\, \eta_{ij} \eta_{kl}\qquad \hbox{a.e. in }\in \O^*_\e.
$$
The vector fields $v$ are the admissible displacement fields with respect to the
reference configuration $\O^*_{\e}$. The tensor field 
$$
\sigma^{\e}_{kl}(v)\doteq a^{\e}_{ijkl}e_{ij}(v)\qquad \forall v\in H^1(\O^*_\e)^3
$$
is the stress tensor associated to the strain tensor $e(v)$.
\smallskip

\noindent{\bf Assumptions} \label{Assum} 
	\begin{itemize}
		\item The functions $a_{ijkl}$ belong to $L^{\infty}(\Y^*)$ and they are $W^{1,\infty}$ in a neighborhood of $S$, $\ds a^\e_{ijkl}=a_{ijkl}\Big(\Big\{{\cdot\over \e}\Big\}\Big)$  a.e.  in $\R^3$,
		\item the applied volume forces $f$ belong to $L^2(\O)^3$, 
		\item the friction coefficient $\mu$ is a non-negative function belonging  to  $W^{1,\infty}(\O)$ with support included in $\O'\subset \O$,\footnote{ with a few change, we can choose a friction coefficient in the form $\ds \mu_\e(x)=\mu\Big(x,\Big\{{x\over \e}\Big\}\Big)$ for a.e. $x\in \O'\times S$ where $\mu\in W^{1,\infty}(\O\times S)$ vanishes in $\big(\O\setminus \overline{\O'}\big)\times S$).		
		}
		\item ${\cal K}_{\e}$ is a convex set  defined by
\begin{equation*}
{\cal K}_{\e}\doteq \big \{ v\in H^1_{\Gamma}(\O^*_\e )^3 \;\;|\;\; [v_{\nu_\e}]_{S_\e}\leq 0 \big\}.
\end{equation*} 
	\end{itemize}
The vector fields $v\in {\cal K}_\e$ are the admissible displacement fields with respect to the reference configuration $\O^*_{\e}$. The inequality in the definition of
${\cal K}_{\e}$ represent the non-penetration condition.\\

The strong formulation of the static contact problem is the following:
\begin{equation}
\left\{
\begin{aligned}
&\hbox{Find $u_{\e}$ in ${\cal K}_{\e}$ such that},\\
-&\text{div}\, \sigma_{\e}({u}_{\e}) = f\quad \text {in } \O^{*}_{\e},\\
&\sigma_{\nu_\e}({u}_{\e})[({u}_{\e})_{\nu_\e}]_{S_{\e}}=0, \quad 
\sigma_{\nu_\e}({u}_{\e})|_{S_{\e}}\leq 0, \quad  [\sigma_{\nu_\e}({u}_{\e})]_{S_{\e}}=0, \\
&|\sigma_{\tau_\e}({u}_{\e})|< \mu  |\s_{\nu_\e}({u}_{\e})|\quad \Rightarrow \quad
[({u}_{\e})_{\tau_\e}]_{S_{\e}}=0\quad \textrm{on}\quad S_{\e},\\
&\sigma_{\tau_\e}({u}_{\e})=-\mu  |\sigma_{\nu_\e}({u}_{\e})|
\frac{[({u}_{\e})_{\tau_\e}]_{S_{\e}}}{|[({u}_{\e})_{\tau_\e}]|_{S_{\e}}}\;\;
\Rightarrow \;\;[({u}_{\e})_{\tau_\e}]_{S_{\e}}\neq 0\quad \textrm{on}\quad S_{\e},\\
&\sigma_{\e}({u}_{\e})\cdot \nu =0 \quad\text{on}\quad \partial\O\setminus \Gamma,\\
& u_\e=0\quad \hbox{on}\quad \Gamma,
\end{aligned}
\right.
\label{prob_str}
\end{equation}
%where $ \partial \Psi_{\e}$ denotes the subdifferential of  $\Psi_\e(v,[{v}_{\tau_\e}])\doteq \big\langle\mu  \,\sigma_{\nu_\e}(v),  |[{v}_{\tau_\e}]|_{S_{\e}}\big\rangle_\e
%$ (here taken in the sense of the $L^{2}(S_{\e})$ duality),
%} and 
where $\sigma_{\nu_\e}(v)=(\sigma_{\e}(v)\, {\nu_\e}) \cdot {\nu_\e}$, $\sigma_{\tau_\e}(v)=\sigma_{\e}(v){\nu_\e}-\sigma_{\nu_\e}(v)\,{\nu_\e}$.\\

The weak formulation of the static contact problem is
\begin{equation} \label{var_stat}
\begin{aligned}
&\hbox{Find $U_{\e}\in {\cal K}_{\e}$ such that   for every $v\in {\cal K}_{\e}$},\\ 
&{\mathbf a}^{\e}(U_{\e},v-U_{\e})+  \big\langle\mu  \,|\sigma_{\nu_{\e}}(U_{\e})| , |[v_{\tau_{\e}}]_{S_{\e}}|-|[(U_\e)_{\tau_{\e}}]_{S_{\e}}|\big\rangle_\e\;  \geq \left( f, v - U_{\e} \right) 
\end{aligned}
\end{equation} where
$$
\begin{aligned}
&\forall (g,w)\in \widetilde{H}^{-1/2}(S_\e)\times \widetilde{H}^{1/2}(S_\e),\qquad \big\langle g\, , \, w\big\rangle_\e=\big\langle g\, , \, w\big\rangle_{\widetilde{H}^{-1/2}(S_\e) , \widetilde{H}^{1/2}(S_\e)},\\
&\forall v\in L^2(\O^*_\e)^3,\qquad ( f ,v )=\int_{\O^*_\e} f\cdot v\, dx.
\end{aligned}
$$ 
To solve the above problem, we consider the contact problem with given friction
\begin{equation}\label{var_stat_G-0}
\left\{\begin{aligned} 
&\hbox{Find $U_{\e,G} \in {\cal K}_{\e}$ such that   for every $v \in {\cal K}_{\e}$ },\\
&{\mathbf a}^{\e}(U_{\e,G},v-U_{\e,G})+ \big\langle G,|[v_{\tau_{\e}}]_{S_{\e}}|-|[(U_{\e,G})_{\tau_{\e}}]_{S_{\e}}|\big\rangle_\e \; \geq \left( f,v - U_{\e,G}\right) 
\end{aligned} \right.
\end{equation} where the linear form $G$ is an element of the cone
$$
{\cal C}^*_\e\doteq\big\{G\in \widetilde{H}^{-1/2}(S_\e)\;\;|\;\; \forall \phi\in  \widetilde{H}^{1/2}(S_\e),\;\; \phi\geq  0\;\Longrightarrow \; \big\langle G,\phi\big\rangle_\e\geq 0\big\}.
$$ 

The existence and uniqueness  of the solution of problem \eqref{var_stat_G-0} is obtained as the solution of a minimization convex functional (see Section \ref{S7}). 
\subsection{Estimates}
One has the following estimates:
\begin{prop}\label{TH71}
	The solution $U_{\e,G}$ of the contact problem  \eqref{var_stat_G-0} satisfies the following a priori estimates: 
	\begin{equation}\label{EstC1}
	\begin{aligned}
	&\|U_{\e,G}\|_{H^1(\O^*_\e)}\leq C\|f\|_{L^2(\O)},\\
	&\|[U_{\e,G}]_{S_\e}\|_{H^{1/2}(S_\e)}\leq C_0 \sqrt\e\|f\|_{L^2(\O)}.
	\end{aligned}
	\end{equation} 
	Furthermore, one has
	\begin{equation}\label{EstC10}
	\sigma_{\e}(U_{\e,G})_{|\partial {\cal S}^-_\e}\,{\nu_{\e}}=\sigma_{\e}(U_{\e,G})_{|\partial {\cal S}^+_\e}\,{\nu_{\e}}\qquad \hbox{in } \quad \widetilde{H}^{-1/2}(S_\e)^3
	\end{equation} and 
	\begin{equation}\label{EstC101}
	\sqrt\e \|\sigma_{\nu_\e}(U_{\e,G})\|_{H^{-1/2}(S_\e)}\le C\|f\|_{L^2(\O)}
	\end{equation}  where $\sigma_{\nu_\e}(U_{\e,G})\doteq(\sigma_{\e}(U_{\e,G})_{|\partial {\cal S}^+_\e}\,{\nu_{\e}}) \cdot {\nu_{\e}}$ and $-\sigma_{\nu_\e}(U_{\e,G})$ belongs to ${\cal C}^*_\e$. 
	\smallskip
	
\noindent 	The constants do not depend on $\e$.
\end{prop}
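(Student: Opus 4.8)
The plan is to read off all four assertions directly from the variational inequality \eqref{var_stat_G-0} by inserting suitable test fields $v\in{\cal K}_\e$, using the coercivity of ${\mathbf a}^\e$ together with the rescaled Korn and divergence estimates of the preceding sections. First, since $0\in{\cal K}_\e$, the choice $v=0$ gives ${\mathbf a}^\e(U_{\e,G},U_{\e,G})+\big\langle G,|[(U_{\e,G})_{\tau_\e}]_{S_\e}|\big\rangle_\e\le(f,U_{\e,G})$; as $G\in{\cal C}^*_\e$ and $|[(U_{\e,G})_{\tau_\e}]_{S_\e}|$ is a non-negative element of $\widetilde{H}^{1/2}(S_\e)$, the friction term is $\ge 0$, so ${\mathbf a}^\e(U_{\e,G},U_{\e,G})\le\|f\|_{L^2(\O)}\|U_{\e,G}\|_{L^2(\O^*_\e)}$. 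Coercivity and the Korn inequality \eqref{eq315}$_{1}$ then yield $\|e(U_{\e,G})\|_{L^2(\O^*_\e)}\le C\|f\|_{L^2(\O)}$ and hence \eqref{EstC1}$_{1}$, while \eqref{EstC1}$_{2}$ is \eqref{eq315}$_{2}$ (equivalently \eqref{JumpS}).

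\emph{Equilibrium, Neumann condition and \eqref{EstC10}.} For $\varphi\in H^1_\Gamma(\O^*_\e)^3$ with $[\varphi]_{S_\e}=0$ one has $U_{\e,G}\pm\varphi\in{\cal K}_\e$ and $[(U_{\e,G}\pm\varphi)_{\tau_\e}]_{S_\e}=[(U_{\e,G})_{\tau_\e}]_{S_\e}$, so the friction term in \eqref{var_stat_G-0} cancels and one obtains the equality ${\mathbf a}^\e(U_{\e,G},\varphi)=(f,\varphi)$. Taking $\varphi\in{\cal D}(\O^*_\e)^3$ gives $-\hbox{div}\,\sigma_\e(U_{\e,G})=f$ in $\O^*_\e$, and then allowing $\varphi$ to be non-zero on $\partial\O\setminus\Gamma$ gives $\sigma_\e(U_{\e,G})\,\nu=0$ there. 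Splitting $\O^*_\e$ into ${\cal S}_\e$ and $\O^*_\e\setminus\overline{{\cal S}_\e}$ and integrating by parts on each piece (legitimate since $\partial{\cal S}$ is ${\cal C}^{1,1}$ and $\hbox{div}\,\sigma_\e(U_{\e,G})\in L^2$), the identity ${\mathbf a}^\e(U_{\e,G},\varphi)=(f,\varphi)$ becomes $\big\langle\sigma_\e(U_{\e,G})_{|\partial{\cal S}^-_\e}\nu_\e-\sigma_\e(U_{\e,G})_{|\partial{\cal S}^+_\e}\nu_\e,\varphi_{|S_\e}\big\rangle=0$ for every trace $\varphi_{|S_\e}\in\widetilde{H}^{1/2}(S_\e)^3$ (on $\partial{\cal S}_\e\setminus S_\e$ the two one-sided tractions agree automatically by interior regularity); this is \eqref{EstC10}.

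\emph{Estimate \eqref{EstC101} and the sign condition.} By \eqref{EstC10}, $\sigma_\e(U_{\e,G})\nu_\e$ is a single element of $\widetilde{H}^{-1/2}(S_\e)^3$ and may be computed from the ${\cal S}_\e$-side. Applying Lemma \ref{lem-div} to $v_\e$ equal to each row of $\sigma_\e(U_{\e,G})$ over ${\cal S}_\e$, with $h_\e=-f_i$ coming from the equilibrium equation, together with $\|\sigma_\e(U_{\e,G})\|_{L^2({\cal S}_\e)}\le C_A\|e(U_{\e,G})\|_{L^2(\O^*_\e)}\le C\|f\|_{L^2(\O)}$ from the a priori bound, controls $\|\sigma_\e(U_{\e,G})\nu_\e\|_{H^{-1/2}(\partial{\cal S}_\e)}$ by $C\|f\|_{L^2(\O)}$; restricting this traction to $S_\e$, taking its normal component (multiplication by $\nu_\e\in W^{1,\infty}$ being continuous on $\widetilde{H}^{\pm1/2}$) and keeping track of the power of $\e$ between the rescaled norms on $\partial{\cal S}_\e$ and on $S_\e$ gives \eqref{EstC101}. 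For the sign condition, given $\phi\in\widetilde{H}^{1/2}(S_\e)$ with $\phi\ge 0$, extend $\phi\,\nu_\e$ from one face of the crack to produce $\varphi\in H^1_\Gamma(\O^*_\e)^3$ with $[\varphi_{\tau_\e}]_{S_\e}=0$ and $[\varphi_{\nu_\e}]_{S_\e}$ of non-positive sign, so that $U_{\e,G}+\varphi\in{\cal K}_\e$ and the friction term again drops, giving ${\mathbf a}^\e(U_{\e,G},\varphi)\ge(f,\varphi)$; inserting $-\hbox{div}\,\sigma_\e(U_{\e,G})=f$ and \eqref{EstC10} into Green's formula reduces this to $\big\langle-\sigma_{\nu_\e}(U_{\e,G}),\phi\big\rangle_\e\ge 0$, i.e. $-\sigma_{\nu_\e}(U_{\e,G})\in{\cal C}^*_\e$.

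\emph{Main difficulty.} The energy estimate and the cancellation of the friction term against tangential test fields are routine. The parts that need care are: the integration by parts across the whole periodic family of cracks $S_\e$ (carried out cell by cell, or globally using the ${\cal C}^{1,1}$-regularity of $\partial{\cal S}$ and interior regularity away from $S_\e$); the identification of the exact power of $\e$ relating Lemma \ref{lem-div}, stated on $\partial{\cal S}_\e$, to the rescaled dual norm on $S_\e$ appearing in \eqref{EstC101}; and keeping the orientation conventions for $\nu_\e$ and for the jump consistent so that the sign in $-\sigma_{\nu_\e}(U_{\e,G})\in{\cal C}^*_\e$ comes out correctly.
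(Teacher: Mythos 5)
Your argument follows the paper's proof essentially step for step: the energy estimate with $v=0$ plus the Korn/jump inequalities of Lemma \ref{prunk}, the cancellation of the friction term against jump-free test fields to get ${\mathbf a}^{\e}(U_{\e,G},v)=(f,v)$ and hence \eqref{EstC10}, Lemma \ref{lem-div} combined with the a priori stress bound for \eqref{EstC101}, and a lifting with zero tangential jump and signed normal jump for $-\sigma_{\nu_\e}(U_{\e,G})\in{\cal C}^*_\e$ (the paper builds this lifting cell by cell on the reference cell, but that is the same idea). The proposal is correct and simply makes explicit some integration-by-parts and $\e$-bookkeeping details that the paper leaves implicit.
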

\begin{proof} 
	\noindent From inequality \eqref{var_stat_G-0}, one obtains
$${\mathbf a}^{\e}(U_{\e,G},U_{\e,G})+ \big\langle G,|[(U_{\e,G})_{\tau_{\e}}]_{S_{\e}}|\big\rangle_\e \leq \left( f,U_{\e,G}\right).$$ Then  estimates \eqref{EstC1} follow the ones of Lemma \ref{prunk}.
\smallskip
	
	Now, let $v$ be in $H^1_{\Gamma}(\O)^3$.  Since $[v_{\nu_{\e}}]_{S_\e}=0$, the fields $U_{\e,G}+ v$ and $U_{\e,G}-v$ belong to ${\cal K}_\e$, one has
	$${\mathbf a}^{\e}(U_{\e,G},U_{\e,G}\pm v-U_{\e,G})+ \big\langle G,|[(U_{\e,G}\pm v)_{\tau_{\e}}]|_{S_{\e}}-|[(U_{\e,G})_{\tau_{\e}}]|_{S_{\e}}\big\rangle_\e \; \geq \left( f,\pm v\right).$$
Since $v$  belongs to $H^1_{\Gamma}(\O)^3$
	$$[(U_{\e,G}\pm v)_{\tau_{\e}}]_{S_{\e}}=[(U_{\e,G})_{\tau_{\e}}]_{S_{\e}}.$$
	Hence
	$$
	{\mathbf a}^{\e}(U_{\e,G},v) \geq \left( f,v\right)\;\; \hbox{and}\;\; {\mathbf a}^{\e}(U_{\e,G},-v) \geq \left( f,-v\right).
	$$
	As a consequence
	\begin{equation}\label{EQ_55}
	\forall v\in H^1_{\Gamma}(\O)^3,\qquad {\mathbf a}^{\e}(U_{\e,G},v)= \left( f,v\right)
	\end{equation}
	which gives \eqref{EstC10}. Due to the estimates \eqref{EstSigma}-\eqref{EstC1}$_1$ and the fact that $\hbox{div}\big(\sigma_{\e}(U_{\e,G})\big)=f$ in ${\cal S}_\e$ we obtain \eqref{EstC101}.\\
	 To prove that $-\sigma_{\nu_{\e}}(U_{\e,G})$ belongs to ${\cal C}^*_\e$ consider a non-negative function $\phi \in H^{1/2}(\partial {\cal S})$ vanishing in $\partial {\cal S}\setminus S$.  Define $v$ as a lifting (vanishing on the boundary of $\Y$) of $\phi\,\nu$ in $H^1(\Y\setminus\overline{\cal S})$\footnote{ such a $v$ exists since  the boundary  of ${\cal S}$ is  ${\cal C}^{1,1}$} and by $0$ in ${\cal S}$. By construction $v \in H^1(\Y^*)^3$ and 
	 $$[v_{\tau}]_{S} =0,\qquad (v_{\nu})_{|\partial {\cal S}^+}=\phi,\quad (v_{\nu})_{|\partial {\cal S}^-}=0,\qquad [v_{\nu}]_{S}=\phi.$$ 
	 For $\xi$ every $\Xi_\e$, set
	$$
	v^\xi_\e=\left\{
	\begin{aligned}
	&\e\,v\Big(\Big\{{\cdot\over \e}\Big\}\Big)\quad \hbox{a.e. in }\;\; \e\xi+\e \Y,\\
	&0\hskip 19mm  \hbox{a.e. in }\;\; \O\setminus(\e\xi+\e \overline\Y).
	\end{aligned}\right.
	$$
	Taking $U_{\e,G}-v^\xi_\e$ as test displacement in \eqref{var_stat_G-0} and using the fact that $\hbox{div}\big(\sigma_{\e}(U_{\e,G})\big)=f$ in $\O^*_\e$, one obtains that $-\sigma_{\nu_{\e}}(U_{\e,G}) \in {\cal C}^*_\e$.
\end{proof}

\section{Regularized Coulomb friction problem}\label{S7}

In order to prove the existence of solutions to the problem \eqref{var_stat}, under a suitable assumption on the boundary of ${\cal S}$,  one can show (as in \cite{EJ05}) that there exists $\alpha\in (0,1/2)$ such that \begin{equation*}
\forall  G\in {\cal C}^*_\e\cap H^{-1/2+\alpha}(S_\e),\quad \mu |\sigma_{\nu_\e}(U_{\e,G})|\in {\cal C}^*_\e\cap H^{-1/2+\alpha}(S_\e)\;\;\hbox{and}\;\; \sqrt\e \|\sigma_{\nu_\e}(U_{\e,G})\|_{H^{-1/2+\alpha}(S_\e)}\le C\|f\|_{L^2(\O)}.
\end{equation*}
Then, the Schauder's theorem (see Theorem \ref{THSCH}) gives the existence of fixed points for the map 
$$A\,:\, G\in {\cal C}^*_\e\cap H^{-1/2+\alpha}(S_\e)\longmapsto \mu |\sigma_{\nu_\e}(U_{\e,G})|\in {\cal C}^*_\e\cap H^{-1/2+\alpha}(S_\e).$$ Thus, the problem \eqref{var_stat} admits solutions. Estimate \eqref{EQ35} yields
$$
\|{\cal T}^b_\e(\sigma_{\nu_\e}(U_{\e,G}))\|_{L^2(\O; \widetilde{H}^{-1/2+\alpha}(S))}=\e^{1/2}\|\sigma_{\nu_\e}(U_{\e,G})\|_{\widetilde{H}^{-1/2+\alpha}(S_\e)}\leq C\|f\|_{L^2(\O)}.
$$
For the homogenization process we need  the compactness of the sequence $\big\{{\cal T}^b_\e\big(\sigma_{\nu_\e}(U_{\e,G})\big)\big\}_\e$ in $L^2(\O; \widetilde{H}^{-1/2}(S))$. Unfortunately,  the above estimate is not sufficient. One must improve  \eqref{EstC101}, this could be obtain by comparing the norms of the tangential jumps $|[(U_{\e,G})_{\tau_\e}]_{S_{\e}}|$ in two neighboring cells.
But, it is well known that the following inequality:
$$\| |u|- |v|\|_{H^{1/2}(S_\e)}\leq C\|u-v \|_{H^{1/2}(S_\e)},\qquad \forall (u,v)\in H^1(S_\e)^3\times H^1(S_\e)^3$$ is false. Moreover, one can not replace the euclidian norm $|\cdot|$ by any kind of its approximation because, if one has
$$\|f(u)-f(v)\|_{H^{1/2}(S_\e)}\leq C\|u-v \|_{H^{1/2}(S_\e)},\qquad \forall (u,v)\in H^1(S_\e)^3\times H^1(S_\e)^3$$ with e.g. $f\in {\cal C}^{1}(\R^3;\R^3)$  then $f$ is affine! \\[1mm]
On the basis of Proposition \ref{prop111} (see Annex \ref{S11}) and in order to perform the homogenization process, we choose to modify the problem \eqref{var_stat} by adding a regularization term.\\[1mm]

We equip $H^2_\Gamma(\O^*_\e)^3$ with the following semi-norm:
$$\GN_\e(u)\doteq\sqrt{\|e(u)\|^2_{L^2(\O^*_\e)}+\e^2\|\nabla e(u)\|^2_{L^2(\O^*_\e)}}.$$ 
Since the displacements belonging to $H^2_\Gamma(\O^*_\e)^3$ 
vanish on $\Gamma$,  this semi-norm is a norm and thus $H^2_\Gamma(\O^*_\e)^3$ is a Hilbert space.\\
In view of Proposition \ref{TH71}, denote
$$
\GH_\e(\O)\doteq\big\{ u\in H^2_\Gamma(\O^*_\e)^3\;|\;  \;\; (\sigma^{\e}(u)_{|\partial {\cal S}^-_\e}\,{\nu_\e}) \cdot {\nu_\e}=(\sigma^{\e}(u)_{|\partial {\cal S}^+_\e}\,{\nu_\e}) \cdot {\nu_\e}\leq 0\;\; \; \hbox{a.e. in  } \; S_\e\Big\}.
$$
The set $\GH_\e(\O)$ is a convex closed subset of $H^2_\Gamma(\O^*_\e)^3$. \\

Observe that since the $a_{ijkl}$'s are $W^{1,\infty}$ in a neighborhood of $S$  and since the boundary of ${\cal S}$ is ${\cal C}^{1,1}$,  the traces on $\partial{\cal S}^\pm_\e$  of the stress tensor of the elements in $\GH_\e(\O)$ belong to $H^{1/2}(\partial {\cal S}^\pm_\e)$. \\[1mm]
For every $u\in \GH_\e(\O)$, we denote $\sigma_{\nu_\e}(u)\in H^{1/2}(S_\e)$ the restriction to $S_\e$ of  $(\sigma^{\e}(u)_{|\partial {\cal S}^+_\e}\,{\nu_\e}) \cdot {\nu_\e}$.\\[1mm]
{\bf From now on, in the left hand-side of problem \eqref{var_stat} we add the regularization term}\\[1mm]
%Now, one replaces the above  problem \eqref{var_stat} by adding the regularization term
$$\kappa\e^2[\,\nabla e(u_\e)\,, \,\nabla e(v-u_\e)\,]_\e$$ where
$$\forall (u,v)\in H^2_\Gamma(\O^*_\e)^3\times H^2_\Gamma(\O^*_\e)^3,\qquad [\nabla e(u)\,, \,\nabla e(v)]_\e\doteq \int_{\O^*_\e}\nabla e(u)\, \nabla e(v)\, dx.$$
We are therefore led to consider the following variational inequality:
\begin{equation}\label{var_Inq}
\left\{\begin{aligned} 
&\hbox{Find $u_\e \in \GH_\e(\O)\cap{\cal K}_{\e}$ such that   for every $v \in \GH_\e(\O)\cap{\cal K}_{\e}$ },\\
&{\mathbf a}^{\e}(u_\e,v-u_\e)+ \kappa\e^2[\,\nabla e(u_\e)\,, \,\nabla e(v-u_\e)\,]_\e + \big(\mu |\sigma_{\nu_\e}(u_\e)|,|[v_{\tau_\e}]_{S_\e}|-|[(u_\e)_{\tau_\e}]_{S_\e}|\big)_\e \; \geq \left( f,v - u_\e\right) 
\end{aligned} \right.
\end{equation}  where
$$
(\phi , \psi)_\e=\int_{S_\e}\phi\,\psi\, d\sigma_\e,\qquad \forall(\phi,\psi)\in L^2(S_\e)^2.
$$ 
\begin{lem}\label{lem1} For every $u$  in $\GH_\e(\O)$, one has
\begin{equation}\label{EQ54}
\sqrt \e\|e(u)_{|\partial {\cal S}^\pm_\e\cap S_\e}\|_{H^{1/2}(S_\e)}\leq C_1 \GN_\e(u)
\end{equation} and
\begin{equation}\label{EQ54-1}
\sqrt \e\|\sigma_{\nu_\e}(u)\|_{H^{1/2}(S_\e)}\leq C_1 \|\Ga\|_{W^{1,\infty}(S)}\GN_\e(u),
\end{equation}
where
$$
\|\Ga\|_{W^{1,\infty}(S)}=\max_{i,j,k,l=1}^3\|a_{ijkl}\|_{W^{1,\infty}(S)}.
$$
The constants do not depend on $\e$ and $\kappa$, they only depend  on $S$.
\end{lem}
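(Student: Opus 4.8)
The plan is to prove both estimates by unfolding the strain tensor $e(u)$ onto the fixed reference cell $\Y^*$, applying the classical trace theorem on the (fixed, ${\cal C}^{1,1}$) domain ${\cal S}$ there, and re-scaling back by means of the identities of Propositions \ref{prop3.3} and \ref{lem46}; every constant produced along the way will depend only on the fixed geometry. First, since $u\in H^2_\Gamma(\O^*_\e)^3$ we have $e(u)\in H^1(\O^*_\e)^{3\times3}$, and applying $\Tes$ componentwise Proposition \ref{prop4.1} gives $\nabla_y\Tes(e(u))=\e\,\Tes(\nabla e(u))$ together with
$$\|\Tes(e(u))\|^2_{L^2(\O;H^1(\Y^*))}=\|e(u)\|^2_{L^2(\widehat\O^*_\e)}+\e^2\|\nabla e(u)\|^2_{L^2(\widehat\O^*_\e)}\leq \GN_\e(u)^2.$$
Since $\Tes(e(u))(x,\cdot)$ is constant in $x$ on each cell $\e\xi+\e\Y$, $\xi\in\Xi_\e$, and vanishes on $\Lambda_\e$, this is the same as $\e^3\sum_{\xi\in\Xi_\e}\|\Tes(e(u))(\e\xi,\cdot)\|^2_{H^1(\Y^*)}\leq\GN_\e(u)^2$; it is this trade-off between the factor $\e^3$ and the scaling of $\GN_\e$ that will have to be tracked.

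Next comes the trace step. For a.e.\ $x\in\O$ the matrix field $\Tes(e(u))(x,\cdot)$ lies in $H^1(\Y^*)^{3\times3}$; restricting it to ${\cal S}$ and to $\Y\setminus\overline{\cal S}$ — domains with ${\cal C}^{1,1}$ boundary — the trace theorem yields a constant $C$ depending only on the fixed geometry such that $\|v_{|\partial{\cal S}^\pm}\|_{H^{1/2}(\partial{\cal S})}\leq C\|v\|_{H^1(\Y^*)}$ for every $v\in H^1(\Y^*)$. Integrating over $x\in\O$, $\big\|\big(\Tes(e(u))\big)_{|\partial{\cal S}^\pm}\big\|_{L^2(\O;H^{1/2}(\partial{\cal S}))}\leq C\,\GN_\e(u)$. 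Since the trace operator commutes with the affine change of variables defining the unfolding, one has in $L^2(\O\times S)$ the identity $\mathcal{T}^b_\e\big(e(u)_{|\partial{\cal S}^\pm_\e\cap S_\e}\big)=\big(\Tes(e(u))_{|\partial{\cal S}^\pm}\big)_{|S}$, whence $\big\|\mathcal{T}^b_\e\big(e(u)_{|\partial{\cal S}^\pm_\e\cap S_\e}\big)\big\|_{L^2(\O;H^{1/2}(S))}\leq C\,\GN_\e(u)$. Finally, combining Proposition \ref{prop3.3} (the $L^2$ part), Lemma \ref{lem46} (the $H^{1/2}$ semi-norm, i.e.\ $\alpha=1/2$) and the definition \eqref{NH1/2} gives, exactly as in Corollary \ref{unfoldet_trace}, $\|\mathcal{T}^b_\e(\phi)\|_{L^2(\O;H^{1/2}(S))}=\sqrt\e\,\|\phi\|_{H^{1/2}(S_\e)}$ for $\phi\in H^{1/2}(S_\e)$; applied with $\phi=e(u)_{|\partial{\cal S}^\pm_\e\cap S_\e}$ this is \eqref{EQ54}.

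For the stress estimate \eqref{EQ54-1}, the decisive point is that unfolding cancels the $O(\e^{-1})$ blow-up of $\nabla a^\e$: since $a^\e_{ijkl}=a_{ijkl}(\{\cdot/\e\})$, the commutation of $\Tes$ with products gives $\Tes(\sigma^\e_{kl}(u))(x,\cdot)=a_{ijkl}\,\Tes(e_{ij}(u))(x,\cdot)$ on $\Y^*$, where $a_{ijkl}$ is $W^{1,\infty}$ in a fixed, $\e$-independent neighbourhood $V$ of $S$ in $\Y^*$. Hence, by the multiplication estimate $\|gh\|_{H^1(V)}\leq C\|g\|_{W^{1,\infty}(V)}\|h\|_{H^1(V)}$ together with the first step, $\|\Tes(\sigma^\e(u))\|_{L^2(\O;H^1(V))}\leq C\|\Ga\|_{W^{1,\infty}(S)}\,\GN_\e(u)$. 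As $\nu\in W^{1,\infty}(\partial{\cal S})^3$, after unfolding $\sigma_{\nu_\e}(u)_{|S_\e}$ is a $W^{1,\infty}$-bilinear expression in $\nu$ of the traces of $\Tes(\sigma^\e(u))$ on $\partial{\cal S}^+$ restricted to $S$; running the trace and re-scaling steps once more — the $W^{1,\infty}$-norm of $\nu$, a fixed geometric constant, being absorbed into $C_1$ — yields \eqref{EQ54-1}.

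The main obstacle here is not analytic but a matter of bookkeeping of powers of $\e$: one must check that the factor $\e^3$ arising from $|\e\xi+\e\Y|$ when passing from the cellwise $H^{1/2}(S)$ norms back to the norm $L^2(\O;H^{1/2}(S))$, together with the $\sqrt\e$ in the identity $\|\mathcal{T}^b_\e(\phi)\|_{L^2(\O;H^{1/2}(S))}=\sqrt\e\|\phi\|_{H^{1/2}(S_\e)}$, exactly compensates the natural $\e$-scaling of $\GN_\e$, so that the resulting constants are independent of both $\e$ and $\kappa$; and that the trace and multiplication constants on the reference domain depend only on the fixed ${\cal C}^{1,1}$ geometry of ${\cal S}$ and on $\|\Ga\|_{W^{1,\infty}(S)}$ and $\|\nu\|_{W^{1,\infty}(\partial{\cal S})}$, which is precisely the source of the stated dependence "only on $S$".
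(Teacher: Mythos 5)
Your argument is correct and is essentially the paper's own proof: the paper applies the trace theorem on the fixed reference geometry to $e(\Phi)$ (its estimate \eqref{EQ75}) and then invokes cellwise $\e$-scaling, which is exactly what you carry out explicitly through the unfolding identities of Proposition \ref{prop4.1}, Proposition \ref{prop3.3} and Lemma \ref{lem46}, i.e. $\|\mathcal{T}^b_\e(\phi)\|_{L^2(\O;H^{1/2}(S))}=\sqrt\e\,\|\phi\|_{H^{1/2}(S_\e)}$. Your derivation of \eqref{EQ54-1} by unfolding the coefficients $a^\e_{ijkl}$ to the fixed $a_{ijkl}$ and $\nu$ and using their $W^{1,\infty}$ regularity near $S$ is precisely the "immediate consequence" the paper has in mind, so the proposal matches the intended proof.
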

\begin{proof} Let $\phi$ be in $H^1(\GY^*)$, the trace theorem gives
$$
\|\phi\|_{L^2({\cal S}^\pm\cap S)}\leq C\|\phi\|_{H^1(\GY)},\qquad \|\phi\|'_{H^{1/2}({\cal S}^\pm\cap S )}\leq C\|\nabla\phi\|_{L^2(\GY)}.
$$ Hence
\begin{equation}\label{EQ75}
\forall \Phi\in H^2({\cal S})^3,\quad \|e(\Phi)\|_{H^{1/2}(\partial {\cal S}^-)}\leq C_1\|e(\Phi)\|_{H^1({\cal S})}.
\end{equation}
The constants depend only on $S$.
Estimate \eqref{EQ54} follows  after  $\e$-scaling applied to the $e_{ij}(u)$'s. Then \eqref{EQ54-1} is an immediate consequence of \eqref{EQ54}.
\end{proof} 
\begin{cor} For every $u$  in $\GH_\e(\O)$
\begin{equation}\label{EQ55}
\|{\cal T}^b_\e\big(\sigma_{\nu_\e}(u)\big)\|_{L^2(\O\times S_\e)}\leq C \GN_\e(u).
\end{equation}
The constant does not depend on $\e$.
\end{cor}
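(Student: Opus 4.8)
The statement is a direct consequence of the $L^2$ scaling identity for the boundary unfolding operator together with the trace bound of Lemma \ref{lem1}. The plan is the following. First, I would apply Proposition \ref{prop3.3} with $\phi=\sigma_{\nu_\e}(u)$, which is legitimate since, by the remark preceding Lemma \ref{lem1}, for $u\in\GH_\e(\O)$ the trace $\sigma_{\nu_\e}(u)$ lies in $H^{1/2}(S_\e)\subset L^2(S_\e)$. This gives
\begin{equation*}
\|{\cal T}^b_\e\big(\sigma_{\nu_\e}(u)\big)\|_{L^2(\O\times S)}=\e^{1/2}\|\sigma_{\nu_\e}(u)\|_{L^2(S_\e)}.
\end{equation*}

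Next I would bound the $L^2(S_\e)$ norm by the full $\e$-weighted Sobolev--Slobodetsky norm: from the definition \eqref{NH1/2} one has $\|\sigma_{\nu_\e}(u)\|_{L^2(S_\e)}\le \|\sigma_{\nu_\e}(u)\|_{H^{1/2}(S_\e)}$, so that
\begin{equation*}
\|{\cal T}^b_\e\big(\sigma_{\nu_\e}(u)\big)\|_{L^2(\O\times S)}\le \e^{1/2}\|\sigma_{\nu_\e}(u)\|_{H^{1/2}(S_\e)}.
\end{equation*}
Then estimate \eqref{EQ54-1} of Lemma \ref{lem1} yields $\e^{1/2}\|\sigma_{\nu_\e}(u)\|_{H^{1/2}(S_\e)}\le C_1\|\Ga\|_{W^{1,\infty}(S)}\,\GN_\e(u)$, and combining the two displays gives the claim with $C=C_1\|\Ga\|_{W^{1,\infty}(S)}$, a constant depending only on $S$ and on the elasticity coefficients, not on $\e$ (nor on $\kappa$).

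There is no real obstacle here: the only points requiring a line of care are that the argument of ${\cal T}^b_\e$ is genuinely in $L^2(S_\e)$ (so that Proposition \ref{prop3.3} applies), and the bookkeeping of the powers of $\e$, where the factor $\e^{1/2}$ produced by Proposition \ref{prop3.3} is exactly absorbed by the factor $\sqrt\e$ on the left-hand side of \eqref{EQ54-1}.
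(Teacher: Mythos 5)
Your argument is correct and is exactly the route the paper intends: the corollary is stated as an immediate consequence of Lemma \ref{lem1} (estimate \eqref{EQ54-1}) combined with the scaling identity of Proposition \ref{prop3.3}, with the factor $\e^{1/2}$ from the unfolding cancelling the $\sqrt\e$ in \eqref{EQ54-1} and the constant $C_1\|\Ga\|_{W^{1,\infty}(S)}$ independent of $\e$ and $\kappa$. Your remark that the target space should read $L^2(\O\times S)$ (rather than $L^2(\O\times S_\e)$ as in the paper's display) is also the correct reading.
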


\section{Existence results for $\e$  and friction fixed}\label{S8}

As a consequence of Lemma \ref{prunk}, one has
$$|(f,u)|\leq C\|f\|_{L^2(\O)} \| e(u) \|_{L^2 (\O^*_{\e})},\qquad \forall u\in \GH_\e(\O).$$

Now, consider the contact problem associated to \eqref{var_Inq} with constant friction 
\begin{equation}\label{var_stat_G}
\left\{\begin{aligned} 
&\hbox{Find $u \in \GH_\e(\O)\cap{\cal K}_{\e}$ such that   for every $v \in \GH_\e(\O)\cap{\cal K}_{\e}$ },\\
&{\mathbf a}^{\e}(u,v-u)+ \kappa\e^2[\,\nabla e(u)\,, \,\nabla e(v-u)\,]_\e+ \big(G,|[v_{\tau_\e}]_{S_\e}|-|[u_{\tau_\e}]_{S_\e}|\big)_\e \; \geq \left( f,v - u\right)
\end{aligned} \right.
\end{equation}  where the linear form $G$ is an element of the cone
$$
{\cal C}^{**}_\e\doteq\big\{G\in L^2(S_\e)\;\;|\;\; G\geq 0 \;\;\hbox{a.e. on }\; S_\e \big\}.
$$ 
The functional $J_\e$ defined in $\GH_\e(\O)\cap {\cal K}_\e$ by
$$J_\e\;:\;  u\in \GH_\e(\O)\cap {\cal K}_\e\longmapsto {1\over 2}{\mathbf a}^{\e}(u,u )+ {\kappa\e^2\over 2}[\,\nabla e(u)\,, \,\nabla e(u)\,]_\e+ \big(G,|[u_{\tau_\e}]_{S_\e}|\big)_\e -\left( f,u\right) $$ is strictly convex, weakly lower semicontinuous and due to \eqref{eq315}$_{1,2}$, it satisfies
$$\lim_{u\in \GH_\e(\O)\cap {\cal K}_\e,\, \GN_\e(u)\to +\infty} J_\e(u)=+\infty.$$
As a consequence, there exists a unique solution  for the  corresponding minimization problem or equivalently for the problem \eqref{var_stat_G} (see \cite{EJ05}). 
\section{Estimates}
In this section, the solution to problem \eqref{var_stat_G} is denoted $u_{\e,G}$.
\begin{thm}\label{TH71-bis}
	The solution $u_{\e,G}$ of the contact problem  \eqref{var_stat_G} satisfies 
	\begin{equation}\label{EstC1-bis}
	\begin{aligned}
	&\sqrt{\kappa}\e\|\nabla e(u_{\e,G})\|_{L^2(\O^*_\e)}\leq C\|f\|_{L^2(\O)},\\
	&\|u_{\e,G}\|_{H^1(\O^*_\e)} \leq C\|f\|_{L^2(\O)},\\
	&\|[u_{\e,G}]_{S_\e}\|_{H^{1/2}(S_\e)}\leq C\sqrt\e \|f\|_{L^2(\O)}.
	\end{aligned}
	\end{equation} 
	Furthermore, one has
	\begin{equation}\label{EstC101-bis}
	\sqrt\e \|\sigma_{\nu_\e}(u_{\e,G})\|_{H^{1/2}(S_\e)}\leq C_2\Big(1+{1\over \sqrt\kappa}\Big)\|f\|_{L^2(\O)}.
	\end{equation}
Moreover, the solution $u_{\e,G}$   depends continuously on the given friction $G$ and one has
	\begin{equation}\label{EstC11-bis}
\|\sigma_{\nu_\e}(u_{\e,G_1})-\sigma_{\nu_\e}(u_{\e,G_2})\|_{H^{1/2}(S_\e)}\leq {C_0C_1\|\Ga\|_{W^{1,\infty}(S)}\over \min\{\overline{\alpha},\kappa\}}\|G_1-G_2\|_{L^2(S_\e)},\qquad \forall (G_1,G_2)\in({\cal C}^{**}_\e)^2.
\end{equation}
	The constants do not depend on $\e$ and $\kappa$.
\end{thm}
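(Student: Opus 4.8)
The plan is to follow the proof of Proposition~\ref{TH71}, carrying the regularization term $\kappa\e^2[\nabla e(\cdot),\nabla e(\cdot)]_\e$ along, and then to translate the resulting energy bounds into bounds on $\sigma_{\nu_\e}$ by means of Lemma~\ref{lem1}. For \eqref{EstC1-bis}, since $0\in\GH_\e(\O)\cap{\cal K}_\e$ I would take $v=0$ as test field in \eqref{var_stat_G}; as $G\geq 0$ on $S_\e$ and $|[(u_{\e,G})_{\tau_\e}]_{S_\e}|\geq 0$, the friction term has a fixed sign and drops, leaving
$${\mathbf a}^\e(u_{\e,G},u_{\e,G})+\kappa\e^2\|\nabla e(u_{\e,G})\|^2_{L^2(\O^*_\e)}\leq (f,u_{\e,G}).$$
Coercivity of ${\mathbf a}^\e$, the Korn inequality \eqref{eq315}$_1$ (to bound $\|u_{\e,G}\|_{L^2(\O^*_\e)}$ by $C\|e(u_{\e,G})\|_{L^2(\O^*_\e)}$), and Cauchy--Schwarz on the right then give $\|e(u_{\e,G})\|_{L^2(\O^*_\e)}\leq C\|f\|_{L^2(\O)}$, whence \eqref{EstC1-bis}$_2$ by \eqref{eq315}$_1$ again, \eqref{EstC1-bis}$_1$ directly, and \eqref{EstC1-bis}$_3$ by \eqref{eq315}$_2$ (equivalently \eqref{JumpS}).

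For \eqref{EstC101-bis}, combining \eqref{EstC1-bis}$_{1,2}$ gives $\GN_\e(u_{\e,G})^2=\|e(u_{\e,G})\|^2_{L^2(\O^*_\e)}+\e^2\|\nabla e(u_{\e,G})\|^2_{L^2(\O^*_\e)}\leq C(1+\kappa^{-1})\|f\|^2_{L^2(\O)}$, i.e.\ $\GN_\e(u_{\e,G})\leq C(1+\kappa^{-1/2})\|f\|_{L^2(\O)}$; since $u_{\e,G}\in\GH_\e(\O)$, estimate \eqref{EQ54-1} of Lemma~\ref{lem1} applied to $u_{\e,G}$ yields \eqref{EstC101-bis}.

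The core of the statement is the continuous dependence \eqref{EstC11-bis}. Write $u_i=u_{\e,G_i}$, $i=1,2$; each $u_i\in\GH_\e(\O)\cap{\cal K}_\e$, hence is an admissible test field for the problem solved by the other. Testing \eqref{var_stat_G} for $u_1$ with $v=u_2$ and for $u_2$ with $v=u_1$, adding the two inequalities (the volume-force contributions cancel) and rearranging, I obtain
$${\mathbf a}^\e(u_1-u_2,u_1-u_2)+\kappa\e^2\|\nabla e(u_1-u_2)\|^2_{L^2(\O^*_\e)}\leq\big(G_1-G_2,\,|[(u_2)_{\tau_\e}]_{S_\e}|-|[(u_1)_{\tau_\e}]_{S_\e}|\big)_\e.$$
Writing $w=u_1-u_2$ and bounding the right-hand side pointwise on $S_\e$ by the reverse triangle inequality $\big||a|-|b|\big|\leq|a-b|$ together with $|[w_{\tau_\e}]_{S_\e}|\leq|[w]_{S_\e}|$, then by Cauchy--Schwarz in $L^2(S_\e)$ and the jump estimate \eqref{JumpS} ($\|[w]_{S_\e}\|_{L^2(S_\e)}\leq C_0\sqrt\e\,\|e(w)\|_{L^2(\O^*_\e)}$), and using coercivity on the left, I get
$$\overline\alpha\|e(w)\|^2_{L^2(\O^*_\e)}+\kappa\e^2\|\nabla e(w)\|^2_{L^2(\O^*_\e)}\leq C_0\sqrt\e\,\|G_1-G_2\|_{L^2(S_\e)}\,\|e(w)\|_{L^2(\O^*_\e)}.$$
Since $\min\{\overline\alpha,\kappa\}\,\GN_\e(w)^2\leq\overline\alpha\|e(w)\|^2_{L^2(\O^*_\e)}+\kappa\e^2\|\nabla e(w)\|^2_{L^2(\O^*_\e)}$ and $\|e(w)\|_{L^2(\O^*_\e)}\leq\GN_\e(w)$, dividing through gives $\GN_\e(u_1-u_2)\leq\frac{C_0\sqrt\e}{\min\{\overline\alpha,\kappa\}}\|G_1-G_2\|_{L^2(S_\e)}$ (which, with Korn's inequality, already yields continuous dependence of $u_{\e,G}$ itself). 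Finally $\sigma_{\nu_\e}(u_1)-\sigma_{\nu_\e}(u_2)=\sigma_{\nu_\e}(u_1-u_2)$ and $u_1-u_2\in H^2_\Gamma(\O^*_\e)^3$, so its stress traces on $\partial{\cal S}^\pm_\e$ lie in $H^{1/2}$ (the $a_{ijkl}$ being $W^{1,\infty}$ near $S$); applying \eqref{EQ54-1} to $u_1-u_2$ and dividing by $\sqrt\e$ gives \eqref{EstC11-bis}.

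The main obstacle is the friction term. As emphasized just before the statement, there is no bound of the form $\||u|-|v|\|_{H^{1/2}}\leq C\|u-v\|_{H^{1/2}}$, so the comparison of $u_1$ and $u_2$ at the level of the friction functional can only be done in $L^2(S_\e)$, where the pointwise reverse triangle inequality is available --- this is precisely why the $L^2(S_\e)$-norm of $G_1-G_2$ appears on the right of \eqref{EstC11-bis}. It is also why the regularization is indispensable: it both supplies the $H^2$-regularity needed to invoke \eqref{EQ54-1} on the differences and furnishes the factor $\min\{\overline\alpha,\kappa\}$ in the denominator.
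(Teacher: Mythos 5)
Your proposal is correct and follows essentially the same route as the paper's proof: testing \eqref{var_stat_G} to get the energy bound and deducing \eqref{EstC1-bis} from Lemma \ref{prunk}, obtaining \eqref{EstC101-bis} from \eqref{EQ54-1}, and proving \eqref{EstC11-bis} by symmetric testing with the two solutions, bounding the friction difference in $L^2(S_\e)$ via the jump estimate \eqref{eq315}$_2$, and then applying \eqref{EQ54-1} to $u_{\e,G_1}-u_{\e,G_2}$. Your added remarks (reverse triangle inequality in $L^2(S_\e)$, applicability of \eqref{EQ54-1} to the difference) only make explicit steps the paper leaves implicit.
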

\begin{proof} 
	\noindent From equality \eqref{var_stat_G}, one obtains
$${\mathbf a}^{\e}(u_{\e,G},u_{\e,G})+\kappa\e^2[\,\nabla e(u_{\e,G})\,, \,\nabla e(u_{\e,G})\,]_\e+ \big(G,|[(u_{\e,G})_{\tau_\e}]_{S_\e}|\big)_\e \leq \left( f,u_{\e,G}\right).$$ Then  estimates \eqref{EstC1-bis} follow the ones of Proposition \ref{prunk} while \eqref{EstC101-bis} is a consequence of \eqref{EQ54-1}.
\smallskip

\noindent Now,  problem \eqref{var_stat_G} with $G_1$ and then $G_2$ together with the estimates \eqref{eq315}$_2$  gives
	$$
	\begin{aligned}
	\overline{\alpha}&\|e(u_{\e,G_1})-e(u_{\e,G_2})\|^2_{L^2(\O^*_\e)} +\kappa\e^2[\,\nabla e(u_{\e,G_1}-u_{\e,G_2})\,, \,\nabla e(u_{\e,G_1}-u_{\e,G_2})\,]_\e\\
	& \leq \|G_1-G_2\|_{L^2( S_\e)}\big(\|[(u_{\e,G_1}-u_{\e,G_2})_{\tau_\e}]_{ S_\e}\|_{L^2( S_\e)}\big)\\
	&\leq C_0 \sqrt\e\|G_1-G_2\|_{L^2(S_\e)}\|e(u_{\e,G_1})-e(u_{\e,G_2})\|_{L^2(\O^*_\e)}.
	\end{aligned}
	$$
So
\begin{equation}\label{EQ64}
\GN(u_{\e,G_1}-u_{\e,G_2})\leq {C_0\over \min\{\overline{\alpha},\kappa\}}\sqrt\e\|G_1-G_2\|_{L^2(S_\e)}.
\end{equation}
Then  estimate \eqref{EQ54-1} yields
$$
\|\sigma_{\nu_\e}(u_{\e,G_1})-\sigma_{\nu_\e}(u_{\e,G_2})\|_{H^{1/2}(S_\e)}\leq {C_1 \|\Ga\|_{W^{1,\infty}(S)}\over \sqrt\e}\GN\big(u_{\e,G_1}-u_{\e,G_2}\big)\leq {C_0C_1\|\Ga\|_{W^{1,\infty}(S)}\over \min\{\overline{\alpha},\kappa\}}\|G_1-G_2\|_{L^2(S_\e)}.
$$ Therefore  \eqref{EstC11-bis} follows. 
\end{proof}

\subsection{Solution of the Coulomb's problem}

A solution of the  contact problem with Coulomb friction is characterized by a fix point of the operator
\begin{equation}\label{A}
A\;:\; G\in {\cal C}^*_\e\longmapsto \mu |\sigma_{\nu_\e}(u_{\e,G})|\in {\cal C}^*_\e.
\end{equation}
The existence of a fix point is proved by the Banach fixed-point theorem.

\begin{prop}\label{prop53}  Assume 
\begin{equation}\label{Contraction}
{C_0C_1\|\Ga\|_{W^{1,\infty}(S)}\|\mu\|_{L^\infty(\O)}\over \min\{\overline{\alpha},\kappa\}}<1
\end{equation}
then problem  \eqref{var_Inq} admits a unique solution. The constant $C_0C_1$ depends only on $S$.
\end{prop}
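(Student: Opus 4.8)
The plan is to apply the Banach fixed-point theorem to the operator $A$ defined in \eqref{A}, acting on the cone ${\cal C}^*_\e$. The first step is to verify that $A$ is well-defined, i.e. that it maps ${\cal C}^*_\e$ into itself: for $G\in {\cal C}^*_\e$ the solution $u_{\e,G}$ of \eqref{var_stat_G} exists and is unique by the argument in Section \ref{S8}, and by Proposition \ref{TH71} (adapted to the regularized problem) $-\sigma_{\nu_\e}(u_{\e,G})$ belongs to ${\cal C}^*_\e$; since $\mu\geq 0$, the quantity $\mu|\sigma_{\nu_\e}(u_{\e,G})|$ is again a nonnegative element, hence lies in ${\cal C}^{**}_\e\subset{\cal C}^*_\e$. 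One should note here that the estimate \eqref{EstC101-bis} shows $A(G)$ is actually in $L^2(S_\e)$, so in practice $A$ maps ${\cal C}^*_\e$ (or rather ${\cal C}^{**}_\e$) into ${\cal C}^{**}_\e$, a closed subset of the Hilbert space $L^2(S_\e)$ which is complete; this is the metric space on which Banach's theorem will be invoked.

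The second and central step is the contraction estimate. Given $G_1, G_2\in{\cal C}^{**}_\e$, one has
$$
\|A(G_1)-A(G_2)\|_{L^2(S_\e)}=\big\|\mu|\sigma_{\nu_\e}(u_{\e,G_1})|-\mu|\sigma_{\nu_\e}(u_{\e,G_2})|\big\|_{L^2(S_\e)}\leq \|\mu\|_{L^\infty(\O)}\big\|\,|\sigma_{\nu_\e}(u_{\e,G_1})|-|\sigma_{\nu_\e}(u_{\e,G_2})|\,\big\|_{L^2(S_\e)}.
$$
The reverse triangle inequality $\big|\,|a|-|b|\,\big|\leq |a-b|$ pointwise gives $\big\|\,|\sigma_{\nu_\e}(u_{\e,G_1})|-|\sigma_{\nu_\e}(u_{\e,G_2})|\,\big\|_{L^2(S_\e)}\leq \|\sigma_{\nu_\e}(u_{\e,G_1})-\sigma_{\nu_\e}(u_{\e,G_2})\|_{L^2(S_\e)}$, and since $S_\e$ is a bounded surface and the $H^{1/2}$-norm dominates the $L^2$-norm, this is bounded by $\|\sigma_{\nu_\e}(u_{\e,G_1})-\sigma_{\nu_\e}(u_{\e,G_2})\|_{H^{1/2}(S_\e)}$. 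Now invoke the continuous-dependence estimate \eqref{EstC11-bis} of Theorem \ref{TH71-bis}, which yields
$$
\|A(G_1)-A(G_2)\|_{L^2(S_\e)}\leq {C_0C_1\|\Ga\|_{W^{1,\infty}(S)}\|\mu\|_{L^\infty(\O)}\over \min\{\overline{\alpha},\kappa\}}\|G_1-G_2\|_{L^2(S_\e)}.
$$
Under the smallness assumption \eqref{Contraction}, the constant on the right is strictly less than $1$, so $A$ is a strict contraction on the complete metric space $({\cal C}^{**}_\e, \|\cdot\|_{L^2(S_\e)})$. Banach's fixed-point theorem then provides a unique fixed point $G^*$, and $u_{\e,G^*}$ is the unique solution of \eqref{var_Inq}; uniqueness of the solution of \eqref{var_Inq} itself follows because any solution $u_\e$ produces a fixed point $G=\mu|\sigma_{\nu_\e}(u_\e)|$ of $A$, which must coincide with $G^*$, and then $u_\e=u_{\e,G^*}$ by uniqueness in \eqref{var_stat_G}.

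The main technical obstacle is not in the fixed-point machinery itself, which is routine once \eqref{EstC11-bis} is in hand, but rather in making sure the two ingredients are compatible: one must check that the regularity gain from the fourth-order term (the $H^{1/2}(S_\e)$ bound \eqref{EstC101-bis} and \eqref{EQ54-1} on the normal stress, which the unregularized problem of Proposition \ref{TH71} did not provide in $L^2$) is precisely what allows the map $A$ to land in $L^2(S_\e)$ and hence to be iterated in a Hilbert space where Banach applies. A secondary point requiring care is that in \eqref{EstC11-bis} the contraction constant blows up as $\kappa\to 0$, which is why $\kappa$ cannot be sent to zero after this step — but for $\kappa$ fixed and satisfying \eqref{Contraction}, the argument closes. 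One should also remark that the dependence $C_0C_1$ only on $S$ (and not on $\e$) is inherited directly from Corollary \ref{unfoldet_trace} and Lemma \ref{lem1}, so the smallness condition \eqref{Contraction} is an $\e$-independent condition, which is what makes it useful for the subsequent homogenization.
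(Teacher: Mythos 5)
Your proposal follows essentially the same route as the paper: the contraction estimate for $A$ is obtained from \eqref{EstC11-bis} (with $\|\mu\|_{L^\infty}$, the reverse triangle inequality and $\|\cdot\|_{L^2(S_\e)}\le\|\cdot\|_{H^{1/2}(S_\e)}$), and the Banach fixed-point theorem in $L^2(S_\e)$ then yields existence and uniqueness for \eqref{var_Inq}. You merely spell out the well-definedness of $A$ and the equivalence between fixed points and solutions, which the paper leaves implicit; no substantive difference.
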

	
\begin{proof} From estimate \eqref{EstC11-bis} one gets
$$
\begin{aligned}
\|\mu \big(\sigma_{\nu_\e}(u_{\e,G_1})-\sigma_{\nu_\e}(u_{\e,G_2})\big)\|_{L^2(S_\e)}\leq &\|\mu\|_{L^\infty(\O)}\|\sigma_{\nu_\e}(u_{\e,G_1})-\sigma_{\nu_\e}(u_{\e,G_2})\|_{L^2(S_\e)}\\
\leq & {C_0C_1\|\Ga\|_{W^{1,\infty}(S)}\|\mu\|_{L^\infty(\O)}\over \min\{\overline{\alpha},\kappa\}}\|G_1-G_2\|_{L^2(S_\e)}.
\end{aligned}
$$ Now, consider the map $A$ defined by \eqref{A}. It is continuous and if the condition \eqref{Contraction} is satisfied then the Banach fixed-point theorem gives a unique solution to the Coulomb friction problem  \eqref{var_Inq}.
\end{proof}
Solutions of \eqref{var_Inq} can also be obtained by Schauder's theorem.
\begin{thm}[Schauder's theorem]\label{THSCH} Let ${\cal X}$ be a Banach space and let ${\cal Y} \subset {\cal X}$ be a convex compact subset. Then every continuous map (for the strong topology of ${\cal X}$) $A:\ {\cal Y} \to {\cal Y}$  has a fixed point in  ${\cal Y}$.
\end{thm}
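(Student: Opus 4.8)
The plan is to reduce this infinite–dimensional statement to Brouwer's fixed point theorem by means of finite–dimensional Schauder approximations, exploiting the compactness of ${\cal Y}$ twice (once to build the approximations, once to pass to the limit).

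First I would use compactness of ${\cal Y}$ to fix, for each $n\in\N$, a finite $1/n$–net $\{y^n_1,\dots,y^n_{k_n}\}\subset {\cal Y}$ with ${\cal Y}\subset \bigcup_{i=1}^{k_n}B(y^n_i,1/n)$, and introduce the continuous weights $\lambda^n_i(y)\doteq\max\{0,\,1/n-\|y-y^n_i\|_{\cal X}\}$ together with the Schauder projection
$$P_n(y)\doteq\frac{\sum_{i=1}^{k_n}\lambda^n_i(y)\,y^n_i}{\sum_{i=1}^{k_n}\lambda^n_i(y)},\qquad y\in {\cal Y}.$$
The denominator is strictly positive on ${\cal Y}$ by the $1/n$–net property, so $P_n$ is well defined and continuous, it maps ${\cal Y}$ into the compact convex polytope $C_n\doteq\mathrm{conv}\{y^n_1,\dots,y^n_{k_n}\}\subset {\cal Y}$, and it satisfies the key estimate $\|P_n(y)-y\|_{\cal X}\le 1/n$ for all $y\in {\cal Y}$, because only the vertices $y^n_i$ with $\|y-y^n_i\|_{\cal X}<1/n$ carry positive weight in the convex combination.

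Next I would apply Brouwer's theorem to $A_n\doteq P_n\circ A\,:\,C_n\to C_n$: this map is continuous and $C_n$ is a compact convex subset of the finite–dimensional space $\mathrm{span}\{y^n_1,\dots,y^n_{k_n}\}$, so there exists $z_n\in C_n\subset {\cal Y}$ with $z_n=P_n(A(z_n))$. The approximation bound then gives
$$\|z_n-A(z_n)\|_{\cal X}=\|P_n(A(z_n))-A(z_n)\|_{\cal X}\le \frac1n\longrightarrow 0.$$
Finally, compactness of ${\cal Y}$ yields a subsequence $z_{n_j}\to z\in{\cal Y}$; continuity of $A$ gives $A(z_{n_j})\to A(z)$, and combined with $\|z_{n_j}-A(z_{n_j})\|_{\cal X}\to 0$ this forces $z=A(z)$, the desired fixed point.

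The only delicate point (everything else being a direct invocation of Brouwer's theorem and a routine compactness extraction) is the construction of the Schauder projection $P_n$: one must check simultaneously that the denominator does not vanish, that $P_n$ is continuous, that its range lies in a finite–dimensional compact convex set so that Brouwer applies, and that it approximates the identity uniformly at rate $1/n$. This is the standard proof of Schauder's theorem, and I would present it essentially as above.
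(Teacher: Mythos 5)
Your proof is correct: it is the classical argument via Schauder projections onto the convex hull of a finite $1/n$-net, Brouwer's theorem on the finite-dimensional polytope, and a compactness extraction, and all the delicate points you flag (positivity of the denominator, continuity of $P_n$, the $1/n$ approximation bound, and that the polytope lies in ${\cal Y}$ by convexity) are handled properly. Note that the paper itself states this theorem without proof, treating it as a standard classical result, so there is no in-paper argument to compare against; your write-up is the standard textbook proof and is complete.
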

\begin{prop}\label{prop53}  For every $\e$ the problem  \eqref{var_Inq} admits solutions. The solutions satisfy
$$\sqrt\e \|\mu \,\sigma_{\nu_\e}(u_{\e,G})\|_{H^{1/2}(S_\e)}\leq C_1C_2\Big(1+{1\over \sqrt\kappa}\Big)\|\mu\|_{W^{1,\infty}(\O)}\|f\|_{L^2(\O)}.
$$
The constant does not depend on $\e$.
\end{prop}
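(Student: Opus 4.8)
The plan is to realise a solution of \eqref{var_Inq} as a fixed point of the map $A$ of \eqref{A}, restricted to the cone ${\cal C}^{**}_\e$, and to produce that fixed point from Schauder's theorem (Theorem~\ref{THSCH}) rather than from the Banach contraction argument, so that the smallness condition \eqref{Contraction} is \emph{not} needed. Fix $\e$. I would take ${\cal X}=L^2(S_\e)$ and, setting
$$R\doteq C_1C_2\Big(1+\tfrac1{\sqrt\kappa}\Big)\|\mu\|_{W^{1,\infty}(\O)}\|f\|_{L^2(\O)},\qquad {\cal Y}\doteq\big\{G\in{\cal C}^{**}_\e\;:\;\sqrt\e\,\|G\|_{H^{1/2}(S_\e)}\le R\big\},$$
I would use ${\cal Y}$ as the invariant set. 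It is convex (intersection of the convex cone ${\cal C}^{**}_\e$ with a ball), non-empty ($0\in{\cal Y}$), closed in $L^2(S_\e)$ and bounded in $H^{1/2}(S_\e)$; since for the fixed $\e$ the set $S_\e$ has finitely many ${\cal C}^{1,1}$ components, the embedding $H^{1/2}(S_\e)\hookrightarrow L^2(S_\e)$ is compact, hence ${\cal Y}$ is compact in ${\cal X}$.

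Next I would check that $A$ maps ${\cal Y}$ into ${\cal Y}$. For $G\in{\cal Y}\subset{\cal C}^{**}_\e$, problem \eqref{var_stat_G} has a unique solution $u_{\e,G}\in\GH_\e(\O)\cap{\cal K}_\e$ (Section~\ref{S8}), whose stress tensor has traces in $H^{1/2}$ on $\partial{\cal S}^\pm_\e$, so $\sigma_{\nu_\e}(u_{\e,G})\in H^{1/2}(S_\e)$ with $\sqrt\e\,\|\sigma_{\nu_\e}(u_{\e,G})\|_{H^{1/2}(S_\e)}\le C_2(1+1/\sqrt\kappa)\|f\|_{L^2(\O)}$ by \eqref{EstC101-bis}. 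Since $t\mapsto|t|$ is $1$-Lipschitz one has $\||\sigma_{\nu_\e}(u_{\e,G})|\|_{H^{1/2}(S_\e)}\le\|\sigma_{\nu_\e}(u_{\e,G})\|_{H^{1/2}(S_\e)}$, and multiplication by the Lipschitz function $\mu$ is bounded on $H^{1/2}(S_\e)$ with constant $C_1\|\mu\|_{W^{1,\infty}(\O)}$ \emph{independent of $\e$}: one splits the Gagliardo semi-norm cell by cell via $\mu\phi(x)-\mu\phi(y)=\mu(x)\big(\phi(x)-\phi(y)\big)+\phi(y)\big(\mu(x)-\mu(y)\big)$ and controls the second contribution with $\|\nabla\mu\|_{L^\infty}$ and the elementary estimate $\int_{\e\xi+\e S}|x-y|^{-1}\,d\sigma_x\le C\e$, the $\e$-weight in \eqref{NH1/2} absorbing the remaining power of $\e$. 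Consequently $A(G)=\mu|\sigma_{\nu_\e}(u_{\e,G})|\ge0$ lies in ${\cal C}^{**}_\e$ and $\sqrt\e\,\|A(G)\|_{H^{1/2}(S_\e)}\le C_1\|\mu\|_{W^{1,\infty}(\O)}\,\sqrt\e\,\|\sigma_{\nu_\e}(u_{\e,G})\|_{H^{1/2}(S_\e)}\le R$; the same chain applied to $\mu\,\sigma_{\nu_\e}(u_{\e,G})$ (without the absolute value) is exactly the a priori bound asserted in the proposition.

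Then I would verify that $A$ is continuous on ${\cal Y}$ for the topology of ${\cal X}=L^2(S_\e)$: if $G_n\to G$ in $L^2(S_\e)$ with $G_n,G\in{\cal Y}$, the Lipschitz estimate \eqref{EstC11-bis} gives $\sigma_{\nu_\e}(u_{\e,G_n})\to\sigma_{\nu_\e}(u_{\e,G})$ in $H^{1/2}(S_\e)$, hence in $L^2(S_\e)$, and since $t\mapsto|t|$ and multiplication by $\mu\in L^\infty$ are continuous on $L^2(S_\e)$ it follows that $A(G_n)\to A(G)$ in $L^2(S_\e)$. Schauder's theorem applied to $A:{\cal Y}\to{\cal Y}$ then yields $G_\e\in{\cal Y}$ with $A(G_\e)=G_\e$; the corresponding $u_\e\doteq u_{\e,G_\e}$ solves \eqref{var_Inq}, and its estimate is the one of the previous step, with constants independent of $\e$.

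The only genuinely delicate point is the construction of ${\cal Y}$. Invariance of $A$ forces one to measure $A(G)$ in the higher-regularity class $H^{1/2}(S_\e)$ made available by the fourth-order term, via \eqref{EstC101-bis} (Lemma~\ref{lem1}); on the other hand $A$ cannot be continuous for the $H^{1/2}$-norm, because, as recalled in Section~\ref{S7}, $\phi\mapsto|\phi|$ is not Lipschitz on $H^{1/2}$ (and no ${\cal C}^1$ substitute would help either), so continuity must be taken in the weaker $L^2(S_\e)$-norm. The compact embedding $H^{1/2}(S_\e)\hookrightarrow L^2(S_\e)$ is precisely what reconciles ``invariant for $A$'' with ``compact for a topology in which $A$ is continuous''; the rest is the bookkeeping of the $\e$-weights in \eqref{NH1/2} needed to see that $R$, and hence the final constant, does not depend on $\e$.
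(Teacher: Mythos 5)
Your argument is correct and is essentially the paper's own proof: the same map $A$, the same radius $R$ from \eqref{EstC101-bis}, the same choice ${\cal X}=L^2(S_\e)$ with ${\cal Y}=\{G\in{\cal C}^{**}_\e\,:\,\|G\|_{H^{1/2}(S_\e)}\le R/\sqrt\e\}$, and Schauder's theorem. You merely make explicit some points the paper leaves implicit (compactness of the embedding $H^{1/2}(S_\e)\hookrightarrow L^2(S_\e)$ for fixed $\e$, the $\e$-uniform bound for multiplication by $\mu$, and the $L^2$-continuity of $A$ via \eqref{EstC11-bis}), which is consistent with the paper's route.
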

	
\begin{proof} Since $\mu$ belongs to $W^{1,\infty}(\O)$ and  due to \eqref{EstC1-bis}-\eqref{EstC101-bis}, one has
$$
\sqrt\e \|\mu \,\sigma_{\nu_\e}(u_{\e,G})\|_{H^{1/2}(S_\e)}\leq C_1\sqrt \e\|\mu\|_{W^{1,\infty}(\O)}\|\sigma_{\nu_\e}(u_{\e,G})\|_{H^{1/2}(S_\e)}\leq C_1C_2\Big(1+{1\over \sqrt\kappa}\Big)\|\mu\|_{W^{1,\infty}(\O)}\|f\|_{L^2(\O)}.
$$
Now, set
\begin{equation}\label{BoundR}
R=C_1C_2\Big(1+{1\over \sqrt\kappa}\Big)\|\mu\|_{W^{1,\infty}(\O)}\|f\|_{L^2(\O)}.
\end{equation}	
Therefore, one has
$$
\forall G\in  {\cal C}^{**}_\e,\quad  \mu \,\sigma_{\nu_\e}(u_{\e,G})\hbox{ belongs to } {\cal C}^{**}_\e\;\; \hbox{and}\quad  \|G\|_{H^{1/2}(S_\e)}\leq {R\over \sqrt \e}\,\Longrightarrow\, \|\mu \,\sigma_{\nu_\e}(u_{\e,G})\|_{H^{1/2}(S_\e)}\leq {R\over \sqrt \e}. 
$$
Applying Schauder's  theorem with the map $A$ (see \eqref{A}), ${\cal X}=L^2(S_\e)$ (endowed with the strong topology) and choosing  $\ds {\cal Y}=\Big\{G \in {\cal C}^{**}_\e\;\; |\;\;  \|G\|_{H^{1/2}(S_\e)}\le {R\over \sqrt \e}\Big\}$ (which is a compact convex subset of ${\cal X}$)  give solutions to problem  \eqref{var_Inq}.
\end{proof}

\section {Homogenization process}
{\it In this section, we denote $u_\e$ the solution to problem \eqref{var_Inq}. }\\[1mm]
	From \eqref{EstC1-bis} the displacement $u_\e$ satisfies the following estimates:
\begin{equation}\label{EstC1-ter}
	\begin{aligned}
	&\e\|\nabla e(u_\e)\|_{L^2(\O^*_\e)}\leq C\|f\|_{L^2(\O)},\\
	&\|u_\e\|_{H^1(\O^*_\e)} \leq C\|f\|_{L^2(\O)},\\
	&\|[u_\e]_{S_\e}\|_{H^{1/2}(S_\e)}\leq C\sqrt\e \|f\|_{L^2(\O)},\\
	&\sqrt \e\|\sigma_{\nu_\e}(u_\e)\|_{H^{1/2}(S_\e)}\leq C\|f\|_{L^2(\O)}.
	\end{aligned}
	\end{equation} The constants do not depend on $\e$.
\subsection{A compactness result}

Below we give a result related to the unfolding method.
\begin{definition} Let $\o$ be an open set strictly included in  $\O$. For every $\e\leq \hbox{dist}(\o,\partial \O)/4$ and for every $\phi\in L^p(\O^*_\e)$, $p\in [1,+\infty]$, we define  ${\cal Q}^{\diamond}_\e (\phi)\in W^{1,\infty}(\o ; L^p(\Y^*))$ by $Q_1$ interpolation. We set 
$$
\begin{aligned}
&\hbox{for } \xi\in \Xi_\e,\quad\Qed (\phi)(\e\xi, y)=\Tes (\phi)(\e\xi,y)\qquad  \hbox{for a.e. }y\in \Y^*,\\
&\hbox{for } x\in \o,\quad\Qed (\phi)(x, y)\;\hbox{is the $Q_1$ interpolate of $\Qed (\phi)$ at the vertices of the cell $\ds\Big(\e\Big[{x\over \e}\Big]_\e+\e \Y\Big)\times \{y\}$,}\\
&\hskip 28mm \hbox{for a.e. }y\in \Y^*.
\end{aligned}
$$
\end{definition}
\begin{lem} Assume $\o\Subset \O$. If $\e$ is small enough,  for every  $\phi\in H^1(\O^*_\e)$ then   $\Qed (\phi)$  belongs to  $H^1(\o; H^1( \Y^*))$ and
$$\| \Qed (\phi)\|_{H^1(\o; H^1( \Y^*))}\leq C \| \phi\|_{H^1(\O^*_\e)}.$$ The constant does not depend on $\e$ and $\o$.
\end{lem}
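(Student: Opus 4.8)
\emph{Proof plan.} The plan is to reduce the statement to the elementary norm estimates for $Q_1$ interpolation, carried out slice by slice in the microscopic variable $y\in\Y^*$ and then integrated in $y$. Fix $\phi\in H^1(\O^*_\e)$. Since $\o\Subset\O$ and $\e\le\mathrm{dist}(\o,\partial\O)/4$, every cell $\e\xi+\e\Y$ that meets $\o$, together with the cells attached to its vertices, is contained in $\widehat\O_\e$, so the restriction of $\phi$ to the non-cracked part of each such cell lies in $H^1$; in particular the nodal values $\Qed(\phi)(\e\xi,y)=\Tes(\phi)(\e\xi,y)=\phi(\e\xi+\e y)$ are well defined for a.e. $y\in\Y^*$ and, as functions of $y$, belong to $H^1(\Y^*)$. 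On the cell of the $\e$-grid containing $x$, with vertices $\e v$, one has $\Qed(\phi)(x,y)=\sum_v\lambda_v(x)\,\phi(\e v+\e y)$, where the $\lambda_v$ are the eight $Q_1$ shape functions ($\lambda_v\ge 0$, $\sum_v\lambda_v\equiv 1$, $\|\nabla_x\lambda_v\|_{L^\infty}\le C/\e$, $\sum_v\nabla_x\lambda_v\equiv 0$). Hence $\Qed(\phi)(\cdot,y)$ is continuous in $x$, $\nabla_y\Qed(\phi)(\cdot,y)$ is the $Q_1$ interpolate in $x$ of the nodal values $\e\,\Tes(\nabla\phi)(\e\xi,y)$ (Proposition \ref{prop4.1}), and, fixing a vertex $v_0$ of the cell, $\nabla_x\Qed(\phi)(x,y)=\sum_v\nabla_x\lambda_v(x)\big(\phi(\e v+\e y)-\phi(\e v_0+\e y)\big)$.

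The two terms not involving $\nabla_x$ are handled by the basic inequality: if $g$ is the $Q_1$ interpolate on $\o$ of nodal values $(c_\xi)$, then $\|g\|_{L^2(\o)}^2\le C\e^3\sum_\xi|c_\xi|^2$ (equivalence of norms on the eight–dimensional local space after rescaling, plus the finite overlap of cells). Applying this with $c_\xi=\phi(\e\xi+\e y)$, then with $c_\xi=\e\,\nabla\phi(\e\xi+\e y)$, integrating over $y\in\Y^*$ and using the change of variables $x=\e\xi+\e y$ (so that $\int_{\Y^*}|\phi(\e\xi+\e y)|^2\,dy=\e^{-3}\|\phi\|_{L^2(\e\xi+\e\Y^*)}^2$, and likewise for $\nabla\phi$), one gets $\|\Qed(\phi)\|_{L^2(\o;L^2(\Y^*))}\le C\|\phi\|_{L^2(\O^*_\e)}$ and $\|\nabla_y\Qed(\phi)\|_{L^2(\o;L^2(\Y^*))}\le C\e\|\nabla\phi\|_{L^2(\O^*_\e)}$.

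The core of the argument is the estimate of $\nabla_x\Qed(\phi)$. From the formula above, on a cell $\e\xi_0+\e\Y$ meeting $\o$ and for a.e. $y$,
$$\|\nabla_x\Qed(\phi)(\cdot,y)\|_{L^2(\e\xi_0+\e\Y)}^2\le C\e\sum|\phi(\e v+\e y)-\phi(\e v_0+\e y)|^2,$$
the sum running over the vertices of that cell. Two vertices of a cell are joined by a chain of at most three edges; along each edge one passes from a point at relative position $y$ in one cell to the point at relative position $y$ in the neighbouring cell, and one inserts the cell averages ${\cal M}_{\e\xi+\e\Y^*}(\phi)$. Because the crack $S$ is strictly interior to $\Y$, the faces shared by two adjacent cells are crack-free, so the interior of the union of two adjacent (cracked) cells is, up to translation and the scaling $\e$, a fixed connected domain on which the Poincaré–Wirtinger inequality holds with an $\e$-independent constant; $\Y^*$ itself is connected and satisfies the same inequality (as already used for Corollary \ref{unfoldet_trace}). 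Thus, after integration in $y$, each ``interior'' term is bounded by $\int_{\Y^*}|\phi(\e\xi+\e y)-{\cal M}_{\e\xi+\e\Y^*}(\phi)|^2\,dy\le C\e^{-1}\|\nabla\phi\|_{L^2(\e\xi+\e\Y^*)}^2$ and each ``jump of averages'' term by $|{\cal M}_{\e\xi+\e\Y^*}(\phi)-{\cal M}_{\e\xi'+\e\Y^*}(\phi)|^2\le C\e^{-1}\|\nabla\phi\|_{L^2(\e\xi+\e D)}^2$ on the two-cell domain $\e\xi+\e D$, both by rescaled Poincaré–Wirtinger. Multiplying by the prefactor $C\e$ and summing over the cells meeting $\o$ (each micro-cell of $\O^*_\e$ being counted a bounded number of times) gives $\|\nabla_x\Qed(\phi)\|_{L^2(\o;L^2(\Y^*))}\le C\|\nabla\phi\|_{L^2(\O^*_\e)}$. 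The mixed derivative $\nabla_x\nabla_y\Qed(\phi)$ is treated identically but more cheaply: on each cell it is a combination of differences $\nabla\phi(\e v+\e y)-\nabla\phi(\e v_0+\e y)$, which one bounds by the plain triangle inequality (no Poincaré inequality is available, nor needed, $\nabla\phi$ being only $L^2$), and the same rescaling and summation yield $\|\nabla_x\nabla_y\Qed(\phi)\|_{L^2(\o;L^2(\Y^*))}\le C\|\nabla\phi\|_{L^2(\O^*_\e)}$. Collecting the four bounds shows $\Qed(\phi)\in H^1(\o;H^1(\Y^*))$ with the asserted estimate; since the constants come only from the norm equivalence on the $Q_1$ space, from the Poincaré–Wirtinger constants of $\Y^*$ and of the two-cell domain, and from the (universal) overlap count, they depend neither on $\e$ nor on $\o$.

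I expect the main obstacle to be precisely this $\nabla_x$ estimate: one cannot control $\phi(\e v+\e y)-\phi(\e v_0+\e y)$ by integrating $\nabla\phi$ along the segment joining the two points, since that segment may cross the crack. Routing the comparison through the cell averages is what makes it work, and this step is legitimate exactly because $S$ is strictly inside $\Y$, so that two adjacent cracked cells still glue along a crack-free face into a connected domain with an $\e$-independent Poincaré constant.
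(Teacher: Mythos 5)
Your proof is correct, and it is essentially the standard argument behind the result: the paper itself gives no proof but simply cites \cite[Propositions 2.6--2.7]{cddgz}, where the same scheme is used ($Q_1$ shape-function estimates, $\nabla_x$ of the interpolate expressed through differences of nodal values, and Poincar\'e--Wirtinger on the perforated reference cell and on the union of two adjacent perforated cells, which has an $\e$-independent constant precisely because the crack $S$ is strictly interior to $\Y$ so adjacent cells glue along crack-free faces). You correctly identified that routing the comparison of $\phi(\e v+\e y)-\phi(\e v_0+\e y)$ through cell averages, rather than integrating $\nabla\phi$ along segments that could cross the crack, is the essential point.
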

\begin{proof} See  \cite[Propositions 2.6-2.7]{cddgz} for the proof.
\end{proof}
\begin{lem} Suppose  $\o\Subset \O$. One has
\begin{equation}\label{EQ68}
\|\Qed \big(e(u_\e)\big)\|_{H^1(\o; H^1(\Y^*))}\leq C\|f\|_{L^2(\O)},\qquad \|\Qed \big(e(u_\e)\big)-\Tes\big(e(u_\e)\big)\|_{L^2(\o  ; H^1( \Y^*))}\leq C\e\|f\|_{L^2(\O)}.
\end{equation}
The constants do not depend on $\e$ (it depends on $\o$).
\end{lem}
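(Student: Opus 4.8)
The statement to be proved is the pair of estimates in \eqref{EQ68}: a uniform $H^1(\o;H^1(\Y^*))$-bound on $\Qed\big(e(u_\e)\big)$, together with the $O(\e)$-closeness of $\Qed\big(e(u_\e)\big)$ to $\Tes\big(e(u_\e)\big)$ in $L^2(\o;H^1(\Y^*))$. The plan is to apply the previous lemma, which gives
$$\|\Qed(\phi)\|_{H^1(\o;H^1(\Y^*))}\leq C\|\phi\|_{H^1(\O^*_\e)},$$
to the choice $\phi=e_{ij}(u_\e)$ componentwise. This reduces the first estimate to showing $\|e(u_\e)\|_{H^1(\O^*_\e)}\leq C\|f\|_{L^2(\O)}$, which is precisely what the regularization provides: by the energy estimate \eqref{EstC1-ter} one has $\|e(u_\e)\|_{L^2(\O^*_\e)}\leq C\|f\|_{L^2(\O)}$ and $\e\|\nabla e(u_\e)\|_{L^2(\O^*_\e)}\leq C\|f\|_{L^2(\O)}$, hence in particular $\|\nabla e(u_\e)\|_{L^2(\O^*_\e)}\leq C\e^{-1}\|f\|_{L^2(\O)}$. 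The subtlety is that this last bound blows up as $\e\to 0$, so one must track $\e$-powers carefully through the $Q_1$-interpolation estimates rather than naively invoking the previous lemma with the full $H^1$-norm.

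\textbf{Step one: the uniform bound.} I would revisit the proof in \cite[Propositions 2.6--2.7]{cddgz} at the level of the scaling. The $Q_1$-interpolation comparison there has the structure $\|\Qed(\phi)-\Tes(\phi)\|_{L^2(\o;L^2(\Y^*))}\leq C\e\|\nabla\phi\|_{L^2(\O^*_\e)}$ and $\|\nabla_x\Qed(\phi)\|_{L^2(\o;L^2(\Y^*))}\leq C\|\nabla\phi\|_{L^2(\O^*_\e)}$, while the $y$-derivatives satisfy $\nabla_y\Qed(\phi)=\e\,\Qed(\nabla\phi)$ up to interpolation, so that $\|\nabla_y\Qed(\phi)\|_{L^2}\leq C\e\|\nabla\phi\|_{L^2(\O^*_\e)}$. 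Applying all of this with $\phi=e(u_\e)$: the $L^2$-in-$(x,y)$ part and the $x$-gradient part are controlled by $\|e(u_\e)\|_{L^2}+\|\nabla e(u_\e)\|_{L^2}$, which is $O(\e^{-1})\|f\|$ — too weak. The resolution is that we do \emph{not} want $\|\Qed(e(u_\e))\|_{H^1(\o;H^1(\Y^*))}$ bounded by the raw $H^1$-norm but rather by the \emph{scaled} quantity $\GN_\e(u_\e)=\big(\|e(u_\e)\|_{L^2}^2+\e^2\|\nabla e(u_\e)\|_{L^2}^2\big)^{1/2}$. Concretely, the $y$-gradient of $\Qed(e(u_\e))$ is $O(\e)\cdot\|\nabla e(u_\e)\|_{L^2}=O(1)\|f\|$, the $x$-gradient of $\Qed(e(u_\e))$ requires a genuine use of the second-order regularity and should be compared not to $\nabla e(u_\e)$ but — after unfolding — to $\Tes(\nabla e(u_\e))$, whose $L^2(\o\times\Y^*)$-norm equals $\e\|\nabla e(u_\e)\|_{L^2}=O(1)\|f\|$. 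So in fact the cleaner route is: apply the interpolation lemma to the function $v_\e$ obtained by $\e$-rescaling, or equivalently observe that $\nabla_x\Qed(\phi)$ is, up to the interpolation error, the finite-difference quotient of the cell-averages of $\phi$, and such difference quotients of $e(u_\e)$ are controlled by $\|\nabla e(u_\e)\|_{L^2(\O^*_\e)}$ times $\e$ because the step size is $\e$ — giving $O(1)\|f\|$. Carefully bookkeeping these $\e$-powers is the main obstacle; none of the individual estimates is deep, but the temptation to cite the previous lemma with the wrong norm must be avoided.

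\textbf{Step two: the $O(\e)$ closeness.} For the second inequality in \eqref{EQ68} I would use directly the interpolation-error estimate from \cite[Prop.~2.6--2.7]{cddgz}: $\|\Qed(\phi)-\Tes(\phi)\|_{L^2(\o;H^1(\Y^*))}\leq C\e\,\|\Tes(\nabla\phi)\|_{L^2(\o\times\Y^*)}$ (the $H^1(\Y^*)$-norm on the left already incorporates the $\nabla_y$-estimate, which by Proposition~\ref{prop4.1} reads $\nabla_y\Tes(\phi)=\e\Tes(\nabla\phi)$, hence no loss). Taking $\phi=e_{ij}(u_\e)$ and summing over $i,j$, the right-hand side is $C\e\,\|\Tes(\nabla e(u_\e))\|_{L^2(\o\times\Y^*)}=C\e\cdot\e\|\nabla e(u_\e)\|_{L^2(\O^*_\e)}\leq C\e\,\|f\|_{L^2(\O)}$ by \eqref{EstC1-ter}$_1$. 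This gives exactly \eqref{EQ68}$_2$. The fact that the constants depend on $\o$ but not on $\e$ is inherited from the previous lemma (the dependence on $\o$ enters only through the requirement $\e\leq\mathrm{dist}(\o,\partial\O)/4$ and through a fixed cut-off). I expect the write-up to be short, essentially: ``Apply the previous lemma componentwise to $e_{ij}(u_\e)$ and use \eqref{EstC1-ter}'', with the one genuinely careful point being the correct power of $\e$ multiplying $\|\nabla e(u_\e)\|_{L^2(\O^*_\e)}$ in each term.
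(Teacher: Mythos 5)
There is a genuine gap: the lemma cannot be deduced from the a priori bounds \eqref{EstC1-ter} plus the general $Q_1$-interpolation lemma, no matter how carefully you track powers of $\e$. The $x$-gradient of $\Qed\big(e(u_\e)\big)$ is, up to interpolation error, the difference quotient $\tfrac{1}{\e}\big(\Tes(e(u_\e))(\cdot+\e\Ge_i,\cdot)-\Tes(e(u_\e))\big)$, so the first inequality in \eqref{EQ68} requires the \emph{improved} difference estimate $\|\Delta_\gk e(u_\e)\|_{L^2(\O^*_\e\cap\o)}\leq C\e\|f\|_{L^2(\O)}$, and the same estimate is what makes the second inequality $O(\e)$ rather than $O(1)$. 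This gain of one power of $\e$ is not a translation estimate: on the cracked domain $e(u_\e)$ has no distributional gradient across $S_\e$, and even ignoring the cracks the translation bound would only give $\|\Delta_\gk e(u_\e)\|\leq \e\|\nabla e(u_\e)\|\leq C\|f\|$ by \eqref{EstC1-ter}$_1$, i.e.\ $O(1)$, hence $O(\e^{-1})$ for the quotient. Your scaling bookkeeping also contains an error that hides this: you write $\|\Tes(\nabla e(u_\e))\|_{L^2(\o\times\Y^*)}=\e\|\nabla e(u_\e)\|_{L^2}$, but the correct identities are $\|\Tes(\nabla \phi)\|_{L^2(\O\times\Y^*)}\leq\|\nabla\phi\|_{L^2(\O^*_\e)}$ (no factor $\e$) and $\nabla_y\Tes(\phi)=\e\,\Tes(\nabla\phi)$; with the correct scaling your Step two only yields $C\|f\|_{L^2(\O)}$ on the right-hand side, not $C\e\|f\|_{L^2(\O)}$.

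The missing idea is the equicontinuity (shift) estimate, which is the actual content of the paper's proof: one localizes with a cut-off $\rho_\o$, uses the translated solutions $\gu_\e(\cdot\pm\e\Gk)$ as test fields in the variational inequality \eqref{var_Inq}, exploits the $\e$-periodicity of the coefficients $a_{ijkl}$, the $W^{1,\infty}$ regularity of $\mu$, the trace estimate \eqref{EQ54-1} for $\sigma_{\nu_\e}$, and the commutator terms \eqref{EQ94}, and then absorbs the friction contribution on the left-hand side thanks to the contraction condition \eqref{Contraction}. This produces
$$\GN_\e(\Delta_\gk u_{\e})\leq C\e\|f\|_{L^2(\O)},\qquad \Gk=\Ge_i,$$
i.e.\ \eqref{EQ67}, from which both inequalities in \eqref{EQ68} follow by the standard unfolding argument (Step 2 of the paper, citing \cite{cdg1}). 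Note in particular that the conclusion of the lemma is conditional on \eqref{Contraction} and on the structural hypotheses on $\mu$ and $a_{ijkl}$; a proof that uses only \eqref{EstC1-ter}, as yours attempts, could not see this dependence, which is a further sign the route cannot work.
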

\begin{proof} For every function  $\phi$ belonging to $L^2(\O^*_\e)$ with support strictly included in $\O$ and for $\e$ small enough, set
$$\Delta_\gk \phi=\phi(\cdot+\e\gk)-\phi,\qquad  \Gk=\Ge_i,\;\; i\in \{1,2,3\},$$
$\Delta_\gk \phi$ also belongs to $L^2(\O^*_\e)$.\\[1mm]
\noindent {\it Step 1.} An equicontinuity result.\\
Let $\o$ be an open set such that  $\o\Subset \O'\Subset \O$ (see the assumption on $\mu$) with $\hbox{dist}(\o,\partial\O)=\d>0$. There exists a function $\rho_{\o}\in {\cal D}(\O)$ such that 
$$
\begin{aligned}
&0\leq \rho_{\o}(x)\leq 1,\quad \forall x\in \O,\\
& \rho_{\o}(x) =0 \quad \hbox{if}\; \hbox{dist}(x,\partial \O)\leq {\d\over 4},\quad x\in \O,\\
& \rho_{\o}(x) =1 \quad \hbox{if}\; \hbox{dist}(x,\partial \O)\geq {\d\over 2},\quad x\in \O.
\end{aligned}
$$ Function $\rho_{\o}$ satisfies ($(i,j)\in \{1,2,3\}^2$)
\begin{equation}\label{EQ90}
\Big\|{\partial \rho_{\o}\over \partial x_i} \Big\|_{L^\infty(\O)}\leq {C\over \d},\quad \Big\|{\partial^2 \rho_{\o}\over \partial x_i\partial x_j} \Big\|_{L^\infty(\O)}\leq {C\over \d^2},\quad \Big\|\Delta_{\gk}{\partial \rho_{\o}\over \partial x_i} \Big\|_{L^\infty(\O)}\leq C{\e\over \d^2},\quad \Big\|\Delta_{\gk}{\partial^2 \rho_{\o}\over \partial x_i\partial x_j} \Big\|_{L^\infty(\O)}\leq C{\e\over \d^3}
\end{equation} where the constant depends on $\partial \O$. Below, in the estimates we will not mention the dependence of constants with respect to $\d$ since the open set $\o$ is fixed. \\[1mm]
Let $v$ be in  ${\cal K}_\e$. Set $v_\e= \ u_{\e}+\rho_{\o} (v-\rho_{\o} u_{\e})$. First, observe that $v=u_{\e}$ in the neighborhood of the boundary of $\O$, also note that $v_\e$ is an admissible test displacement (it belongs to ${\cal K}_\e$). Denote		
	$$\gu_\e=\rho_{\o} u_{\e},\quad \gf=\rho_{\o} f.$$ 
In \eqref{var_Inq} choose as test function $v_\e$,  rewrite the inequality in terms of $\gu_\e$ and $\gf$,  shifting the terms with derivatives of $\rho_{\o}$ into the right-hand side and additionally  due to the assumption on $\mu $ observe that
$$ \mu  \sigma_{\nu_\e}(\gu_\e)=\mu  \sigma_{\nu_\e}(u_\e).$$ One obtains
\begin{equation}\label{EQ93}
\begin{aligned}
& {\mathbf a}^\e\big(\gu_\e, v-\gu_\e\big)+\kappa\e^2\big[\gu_\e,v-\gu_\e\big]_\e- \big(\mu\,  \sigma_{\nu_\e}(\gu_\e) , |[v_{\tau_\e}]|_{S_\e}-|[(\gu_\e)_{\tau_\e}]|_{S_\e}\big)_\e\\
&\geq \int_{\O^*_\e}\gf\big(v- \gu_\e\big)dx+\gb_\e(u_\e,v-\gu_\e) \qquad \hbox{for every $v\in {\cal K}_\e$},
\end{aligned}
\end{equation} 
where $\gb_\e$ is a  bilinear form. For every $w \in H^1_\Gamma(\O^*_\e)^3$ one has 
\begin{equation*} 
\gb_\e(u_\e, w)= {\mathbf a}^\e\big(\rho_{\o} u_\e,w\big)-{\mathbf a}^\e\big(u_\e,\rho_{\o} w\big)+\kappa\e^2\big(\big[\rho_{\o} u_\e,w\big]_\e-\big[u_\e,\rho_{\o} w\big]_\e\big)
\end{equation*} 
which is also equal to
\begin{equation} \label{EQ94}
\begin{aligned}
&	\gb_\e(u_\e,w)= \int_{\O^*_\e} a^\e_{ijkl}\Big[e_{ij}\big(u_{\e}\big){\partial\rho_{\o}\over  \partial x_l}\,w_k-{\partial\rho_{\o}\over  \partial x_j} \big(u_{\e}\big)_i\,e_{kl}(w)\Big]\,dx\\
&	+ \kappa \e^2 \int_{\O^*_\e}\Big[\Big({\partial^2 \rho_{\o}\over  \partial x_l\partial x_i}\big(u_{\e}\big)_j+ 2{\partial\rho_{\o}\over  \partial x_l}e_{ij}(u_\e)\Big)\,{\partial e_{ij}(w)\over  \partial x_l} -{\partial e_{ij}(u_{\e})\over  \partial x_l}\Big({\partial^2 \rho_{\o}\over  \partial x_l\partial x_i}w_j+ 2{\partial\rho_{\o}\over  \partial x_l}\, \,e_{ij}(w)\Big)\Big]\,dx,\\
\end{aligned}
\end{equation} 
For $\e$ small enough, the set $(\e\Gk+\e S_\e)\cap \o$ is included in $\O$ and one has $\gu_{\e}(\cdot\pm\e\Gk) \in \GH_\e(\O)\cap {\cal K}_\e$.\\
 Choose $v=\gu_{\e}(\cdot-\e\Gk) \in \GH_\e(\O)\cap {\cal K}_\e$ as test displacement in \eqref{EQ93}. Due to the periodicity of the coefficients $a_{ijkl}$, $\mu$ and  after a change of variables, one obtains
\begin{equation}\label{EQ91}
\begin{aligned}
&{\mathbf a}^{\e}(\gu_{\e}(\cdot+\e\Gk),\gu_\e-\gu_{\e}(\cdot+\e\Gk))+ \kappa\e^2[\,\nabla e(\gu_{\e}(\cdot+\e\Gk))\,, \,\nabla e(\gu_\e-\gu_{\e}(\cdot+\e\Gk))\,]_\e\\
+& \big(\mu(\cdot+\e\Gk) |\sigma_{\nu_\e}\big(\gu_\e(\cdot+\e\Gk)\big)|\,,\,|[(\gu_\e)_{\tau_\e}]_{S_\e}|-|[(\gu_{\e}(\cdot+\e\Gk))_{\tau_\e}]_{S_\e}|\big)_\e \\
& \geq \left( \gf(\cdot+\e\Gk),\gu_\e - \gu_{\e}(\cdot+\e\Gk)\right)+\gb_\e(u_{\e}(\cdot+\e\Gk) , \gu_\e-\gu_{\e}(\cdot+\e\Gk)). 
\end{aligned}
\end{equation} 
Besides, since $\gu_{\e}(\cdot+\e\Gk) \in \GH_\e(\O)\cap {\cal K}_\e$, from \eqref{EQ93} one also has
\begin{equation}\label{EQ92}
\begin{aligned}
&{\mathbf a}^{\e}(\gu_{\e},\gu_{\e}(\cdot+\e\Gk)-\gu_{\e})+ \kappa\e^2[\,\nabla e(\gu_{\e})\,, \,\nabla e(\gu_{\e}(\cdot+\e\Gk)-\gu_\e)\,]_\e\\
+ &\big(\mu\, \sigma_{\nu_\e}(\gu_\e)\,,\, |[(\gu_{\e}(\cdot+\e\Gk))_{\tau_\e}]_{S_\e}|-|[(\gu_{\e})_{\tau_\e}]_{S_\e}|\big)_\e  \\
& \geq \left( \gf,\gu_{\e}(\cdot+\e\Gk) - \gu_{\e}\right) +\gb_\e(u_{\e} , \gu_{\e}(\cdot+\e\Gk)-\gu_\e)
\end{aligned}
\end{equation} 
The above inequalities \eqref{EQ91}-\eqref{EQ92} lead to
\begin{equation}\label{EQ92-2}
\begin{aligned}
&{\mathbf a}^{\e}(\Delta_\gk \gu_{\e},\Delta_\gk \gu_{\e})+\kappa\e^2\|\nabla e(\Delta_\gk \gu_{\e})\|^2_{L^2(\O^*_\e)}+\big(\mu\, \sigma_{\nu_\e}(\Delta_\gk \gu_\e)\,, \,\Delta_\gk|[(\gu_{\e})_{\tau_\e}]_{S_\e}|\big)_\e \\
\leq &\left( \Delta_\gk \gf,\Delta_\gk \gu_{\e}\right)+|\Delta_{\gk}\gb_\e|+|\big(\Delta_\gk\mu\, |\sigma_{\nu_\e}(\gu_\e)(\cdot+\e\Gk)|\,,\,\Delta_\gk|[(\gu_\e)_{\tau_\e}]_{S_\e}|\big)_\e|
\end{aligned}
\end{equation} 
where $\Delta_{\gk}\gb_\e$ is equal to (thanks to \eqref{EQ94})
\begin{equation}\label{EQ910-}
	\begin{aligned}
	\Delta_{\gk} {\bf b}_\e\doteq&\int_{\O^*_\e} a^\e_{ijkl}\Big[\Delta_\gk\Big( e_{ij}\big(u_{\e}\big){\partial\rho_{\o}\over  \partial x_l}\Big)\,\big(\Delta_\gk \gu_\e)_k\, dx-\int_{\O^*_\e}\Delta_\gk\Big({\partial\rho_{\o}\over  \partial x_j} \big(u_{\e}\big)_i\Big)\,e_{kl}\big(\Delta_\gk \gu_\e\big)\Big]\,dx\\
	&+ \kappa \e^2 \int_{\O^*_\e}\Big[\Delta_\gk\Big({\partial^2 \rho_{\o}\over  \partial x_l\partial x_i}\big(u_{\e}\big)_j+ 2{\partial\rho_{\o}\over  \partial x_l}e_{ij}(u_\e)\Big)\,{\partial e_{ij}(\Delta_\gk \gu_\e)\over  \partial x_l}\, dx\\
	&- \kappa \e^2 \int_{\O^*_\e}\Big[\Delta_\gk\Big({\partial e_{ij}(u_\e)\over  \partial x_l}{\partial^2 \rho_{\o}\over  \partial x_l\partial x_i}\Big)\Delta_\gk\big(\gu_{\e}\big)_j+ 2\Delta_\gk\Big({\partial e_{ij}(u_\e)\over  \partial x_l}{\partial\rho_{\o}\over  \partial x_l}\Big)e_{ij}(\Delta_\gk\gu_\e)\Big]\, dx.
	\end{aligned}
\end{equation}
Estimates \eqref{EstC1-ter}$_4$ and \eqref{JumpS} give
\begin{equation}\label{EQ910}
\begin{aligned}
|\big(\Delta_\gk\mu\, |\sigma_{\nu_\e}(\gu_\e)(\cdot+\e\Gk)|\,,\,\Delta_\gk|[(\gu_\e)_{\tau_\e}]_{S_\e}|\big)_\e| & \leq C\e \|\nabla\mu\|_{L^\infty(\O)}{1\over \sqrt\e}\|f\|_{L^2(\O)}\|\Delta_\gk|[(\gu_\e)_{\tau_\e}]_{S_\e}|\|_{L^2(S_\e)}\\
& \leq C\e \|\nabla\mu\|_{L^\infty(\O)}\|f\|_{L^2(\O)}\|e(\Delta_\gk \gu_\e)|\|_{L^2(\O^*_\e)}.
\end{aligned}
\end{equation}
Now, observe that the first and the last integrals in \eqref{EQ910-} are equal to
$$
\begin{aligned}
&\int_{\O^*_\e} \hskip-1mm a^\e_{ijkl}\,\Delta_\gk\Big( e_{ij}\big(u_{\e}\big){\partial\rho_{\o}\over  \partial x_l}\Big)\,\big(\Delta_\gk \gu_\e )_k\, dx
=\int_{\O^*_\e} \hskip-1mm a^\e_{ijkl}e_{ij}\big(u_{\e}\big){\partial\rho_{\o}\over  \partial x_l}\Delta_{-\gk}( \Delta_{\gk} \gu_\e)_k\,dx,\\
&\int_{\O^*_\e}\Big[\Delta_\gk\Big({\partial e_{ij}(u_\e)\over  \partial x_l}{\partial^2 \rho_{\o}\over  \partial x_l\partial x_i}\Big)\Delta_\gk\big(\gu_{\e}\big)_j+ 2\Delta_\gk\Big({\partial e_{ij}(u_\e)\over  \partial x_l}{\partial\rho_{\o}\over  \partial x_l}\Big)e_{ij}(\Delta_\gk\gu_\e)\Big]\, dx\\
=&\int_{\O^*_\e}\Big[\Big({\partial e_{ij}(u_\e)\over  \partial x_l}{\partial^2 \rho_{\o}\over  \partial x_l\partial x_i}\Big)\Delta_{-\gk}\big(\Delta_\gk\big(\gu_{\e}\big)_j\big)+ 2\Big({\partial e_{ij}(u_\e)\over  \partial x_l}{\partial\rho_{\o}\over  \partial x_l}\Big)\Delta_{-\gk}\big(e_{ij}(\Delta_\gk\gu_\e)\big)\Big]\, dx.
\end{aligned}
$$
Hence, using estimates \eqref{EQ90}$_1$ and \eqref{EstC1-ter}
$$
\begin{aligned}
&\Big|\int_{\O^*_\e} \hskip-1mm a^\e_{ijkl}\,\Delta_\gk\Big( e_{ij}\big(u_{\e}\big){\partial\rho_{\o}\over  \partial x_l}\Big)\,\big(\Delta_\gk \gu_\e )_k\, dx\Big|
\leq C\|f\|_{L^2(\O)}\|\Ga\|_{W^{1,\infty}(\O)}\|\Delta_{-\gk}( \Delta_{\gk} \gu_\e)\|_{L^2(\O^*_\e)}\\
&\Big|\int_{\O^*_\e}\Big[\Delta_\gk\Big({\partial e_{ij}(u_\e)\over  \partial x_l}{\partial^2 \rho_{\o}\over  \partial x_l\partial x_i}\Big)\Delta_\gk\big(\gu_{\e}\big)_j+ 2\Delta_\gk\Big({\partial e_{ij}(u_\e)\over  \partial x_l}{\partial\rho_{\o}\over  \partial x_l}\Big)e_{ij}(\Delta_\gk\gu_\e)\Big]\, dx\Big|\\
\leq&C\|f\|_{L^2(\O)}\big(\|\Delta_{-\gk}\big(\Delta_\gk(\gu_{\e})\big)\big)\|_{L^2(\O^*_\e)}+ \|\Delta_{-\gk}\big(e_{ij}(\Delta_\gk\gu_\e)\big)\|_{L^2(\O^*_\e)}\big).
\end{aligned}
$$
Therefore, with \eqref{EQ90}$_{2,3}$ and again estimates \eqref{EstC1-ter} one obtains
$$ 
\Big\|\Delta_\gk\Big({\partial\rho_{\o}\over  \partial x_j} \big(u_{\e}\big)_i\Big)\Big\|_{L^2(\O^*_\e\cap \o)}\leq C\e\|f\|_{L^2(\O)},\quad \Big\|\Delta_\gk\Big({\partial^2 \rho_{\o}\over  \partial x_l\partial x_i}\big(u_{\e}\big)_j+ 2{\partial\rho_{\o}\over  \partial x_l}e_{ij}(u_\e)\Big)\Big\|_{L^2(\O^*_\e\cap \o)}\leq C\|f\|_{L^2(\O)}.
$$ The constants only depend on $\o$. 
Besides, 
$$
\begin{aligned}
&\|\Delta_{-\gk}\big(\Delta_\gk(\gu_{\e})\big)\|_{L^2(\O^*_\e)}\leq C\e \|\nabla\big(\Delta_\gk(\gu_{\e})\big)\|_{L^2(\O^*_\e)}\leq C\e \|e\big(\Delta_\gk(\gu_{\e})\big)\|_{L^2(\O^*_\e)},\\
&\|\Delta_{-\gk}\big(e(\Delta_\gk\gu_\e)\big)\|_{L^2(\O^*_\e)}\|\leq C\e\|\nabla\big(\Delta_\gk e(\gu_{\e})\big)\|_{L^2(\O^*_\e)}\leq C\e \|\nabla \big(e(\Delta_\gk \gu_{\e})\big)\|_{L^2(\O^*_\e)}.
\end{aligned}
$$	
Summarizing the above equalities and estimates lead to
\begin{equation}\label{EQ99}
\begin{aligned}
|\Delta_{\gk} {\bf b}_\e|  &\leq C\e \|f\|_{L^2(\O)}\big( \|e(\Delta_\gk \gu_{\e})\|_{L^2(\O_\e^*)}+\e\|\nabla \big(e(\Delta_\gk \gu_{\e})\big)\|_{L^2(\O^*_\e)}\big).
\end{aligned}
\end{equation}
Hence,  \eqref{EQ92-2} together with  \eqref{EQ910}-\eqref{EQ99}  yield
$$
\begin{aligned}
&\overline{\alpha}\|e\big(\Delta_\gk u_{\e}\big)\|^2_{L^2(\O^*_\e)}+\kappa\e^2\|\nabla e(\Delta_\gk u_{\e})\|^2_{L^2(\O^*_\e)}\\
\leq & \e\|\mu\|_{L^\infty(\O)}\|\sigma_{\nu_\e}(\Delta_\gk u_\e)\|_{L^2(S_\e)}\|\Delta_\gk [(u_\e)_{\tau_\e}]\|_{L^2(S_\e)}\\
+&C\|f\|_{L^2(\O)}\|\Delta_{-\gk}\big(\Delta_\gk u_{\e}\big)\|_{L^2(\O^*_\e)}+C \e\|f\|_{L^2(\O)}\GN_\e(\Delta_\gk u_{\e}).
\end{aligned}
$$
Using \eqref{JumpS}, \eqref{EQ54-1}  that gives 
$$
\begin{aligned}
&\overline{\alpha}\|e\big(\Delta_\gk u_{\e}\big)\|^2_{L^2(\O^*_\e)}+\kappa\e^2\|\nabla e(\Delta_\gk u_{\e})\|^2_{L^2(\O^*_\e)}\\
\leq & \|\mu\|_{L^\infty(\O)}{C_1\over \sqrt\e} \|\Ga\|_{W^{1,\infty}(S)}\GN_\e(\Delta_\gk u_{\e})\,C_0\sqrt\e \|e\big(\Delta_\gk u_{\e}\big)\|_{L^2(\O^*_\e)}+C \e\|f\|_{L^2(\O)}\GN_\e(\Delta_\gk u_{\e}).
\end{aligned}
$$
Thus
$$
\begin{aligned}
&\min\{\overline{\alpha},\kappa\}\GN^2_\e(\Delta_\gk u_{\e}) \leq  C_0C_1\|\mu\|_{L^\infty(\O)}\|\Ga\|_{W^{1,\infty}(S)}\GN^2_\e(\Delta_\gk u_{\e})+C\e\|f\|_{L^2(\O)}\GN_\e(\Delta_\gk u_{\e}).
\end{aligned}
$$
At this point one deduces that under assumption \eqref{Contraction}, estimate 
\begin{equation}
\label{EQ67}
\GN_\e(\Delta_\gk u_{\e})\leq C\e\|f\|_{L^2(\O)},\qquad \Gk=\Ge_i,\;\; i\in \{1,2,3\}
\end{equation} holds true.
The constant does not depend on $\e$ (it depends on $\o$).\\

\noindent{\it Step 2.} We prove the estimates of the lemma.

\noindent First the above estimate \eqref{EQ67} and \eqref{EstC1-ter}$_1$ lead to
$$
\begin{aligned}
&\|\Tes\big(e(u_\e)\big)\|_{L^2(\O; H^1(\Y^*))}\leq C\|f\|_{L^2(\O)},\\
&\sum_{i=1}^3\| \Tes\big(e(u_\e)(\cdot+\e \Ge_i, \cdot)\big)-\Tes\big(e(u_\e)\big)\|_{L^2(\o; H^1(\Y^*))}\leq C\e\|f\|_{L^2(\O)}
\end{aligned}
$$ which in turn  yield  \eqref{EQ68} (see \cite{cdg1}).
\end{proof}
As a consequence of the above lemma one has
\begin{equation}\label{EQ912}
\|\Qed \big(e(u_\e)\big)-\Tes\big(e(u_\e)\big)\|_{L^2(\o  ; H^{1/2}(\partial {\cal S}^\pm))}\leq C\e\|f\|_{L^2(\O)}.
\end{equation}
The constants do not depend on $\e$ (it depends on $\o$).
\begin{lem}\label{lem65} There exists a constant $C^*$ which only depends on $\Y^*$ such that for every $(v,\widehat{v})\in H^1(\O)^3\times L^2(\O; H^1_{per}(\Y^*)^3\cap W(\Y^*))$ 
$$C^*\big(\|e(v)\|_{L^2(\O)}+\|e_y(\widehat{v})\|_{L^2(\O\times \Y^*)}\big)\leq \|e(v)+ e_y(\widehat{v})\|_{L^2(\O\times \Y^*)}.$$
\end{lem}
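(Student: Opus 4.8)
The plan is to reduce the inequality to a cell-level coercivity estimate and then integrate in the slow variable $x$.

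\textbf{Step 1.} First I would establish the following cell inequality: there is a constant $C$ depending only on $\Y^*$ such that for every constant symmetric matrix $M\in\R^{3\times 3}$ and every $w\in H^1_{per}(\Y^*)^3\cap W(\Y^*)$,
\begin{equation*}
|M|^2+\|e_y(w)\|^2_{L^2(\Y^*)}\le C\,\|M+e_y(w)\|^2_{L^2(\Y^*)}.
\end{equation*}
This I would prove by a contradiction/compactness argument: if it fails, pick $(M_n,w_n)$ with $|M_n|^2+\|e_y(w_n)\|^2_{L^2(\Y^*)}=1$ and $\|M_n+e_y(w_n)\|_{L^2(\Y^*)}\to 0$. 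Since $w_n\in W(\Y^*)$, the Korn inequality \eqref{Eq. 38} on $\Y^*$ gives $\|w_n\|_{H^1(\Y^*)}\le C\|e_y(w_n)\|_{L^2(\Y^*)}\le C$, and $|M_n|\le 1$; so along a subsequence $M_n\to M$ in $\R^{3\times 3}$ and $w_n\rightharpoonup w$ in $H^1(\Y^*)^3$. The defining conditions of $W(\Y^*)$ and the periodicity constraint pass to the weak limit, hence $w\in H^1_{per}(\Y^*)^3\cap W(\Y^*)$, and $M_n+e_y(w_n)\rightharpoonup M+e_y(w)$ forces $M+e_y(w)=0$ by weak lower semicontinuity of the $L^2$-norm.

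\textbf{Step 2.} Next I would run a rigidity argument to conclude $M=0$ and $w=0$. From $e_y(w)=-M$, the field $\phi(y)\doteq w(y)+My$ satisfies $e_y(\phi)=0$ on the open connected set $\Y^*$, so $\phi$ is a rigid displacement, $\phi(y)=a+b\wedge y$; writing $B$ for the skew-symmetric matrix with $Bx=b\wedge x$ this reads $w(y)=a+(B-M)y$. The $\Y$-periodicity of $w$ then forces $(B-M)e_i=0$ for $i=1,2,3$, i.e.\ $B=M$; a skew matrix that equals a symmetric one is zero, so $M=0$ and $w$ is a rigid displacement. But $w\in W(\Y^*)$ is orthogonal to all rigid displacements, whence $w=0$. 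Consequently $M_n\to 0$ and $e_y(w_n)=(M_n+e_y(w_n))-M_n\to 0$ strongly in $L^2(\Y^*)$, contradicting the normalization; this proves the cell inequality.

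\textbf{Step 3.} Finally, for $(v,\widehat v)$ as in the statement I would apply the cell inequality for a.e.\ $x\in\O$ with $M=e(v)(x)$, which is independent of $y$, and $w=\widehat v(x,\cdot)$, and integrate over $\O$. Since $\|e(v)(x)\|^2_{L^2(\Y^*)}=|\Y^*|\,|e(v)(x)|^2$, this yields $\|e(v)\|^2_{L^2(\O)}+\|e_y(\widehat v)\|^2_{L^2(\O\times\Y^*)}\le \widetilde C\,\|e(v)+e_y(\widehat v)\|^2_{L^2(\O\times\Y^*)}$ for a constant $\widetilde C=\widetilde C(\Y^*)$; taking square roots and using $a+b\le\sqrt 2\,(a^2+b^2)^{1/2}$ gives the claimed inequality with $C^*=(2\widetilde C)^{-1/2}$. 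The main obstacle is Step 2 — verifying that periodicity together with a constant strain forces the matrix to vanish, and checking that the constraints defining $W(\Y^*)$ and the periodicity genuinely survive the weak limit; the compactness scheme and the final integration are routine.
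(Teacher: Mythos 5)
Your proof is correct, and it takes the same overall reduction as the paper (a cell inequality for a constant symmetric matrix $\zeta$ plus a periodic corrector in $W(\Y^*)$, followed by integration in $x$ exactly as in your Step 3), but you establish the cell inequality by a different mechanism. The paper proves \eqref{EQ69} directly: it applies the Korn inequality to the displacement $y\mapsto \zeta y+\widehat w(y)$ to produce a rigid displacement $r$ with $\|\zeta\cdot+r+\widehat w\|_{H^1(\Y^*)}\le C\|\zeta+e_y(\widehat w)\|_{L^2(\Y^*)}$, and then compares traces on opposite faces of $\Y$ (where periodicity of $\widehat w$ cancels) to extract $|\zeta|\le C'\|\zeta\cdot+r+\widehat w\|_{H^1(\Y^*)}$, so the constant is traceable to the Korn and trace constants of $\Y^*$. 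You instead argue by contradiction and compactness, using the Korn inequality \eqref{Eq. 38} on $W(\Y^*)$ for boundedness, weak closedness of $H^1_{per}(\Y^*)^3\cap W(\Y^*)$, and then a rigidity classification: constant strain plus periodicity forces the skew part to equal the symmetric matrix, hence both vanish, and orthogonality to $\mathcal{R}$ kills the remaining rigid field. Both arguments exploit the same two ingredients (Korn on $\Y^*$ and periodicity across opposite faces); yours is a soft argument yielding a non-explicit constant, the paper's is quantitative in spirit, but for the lemma only the existence of a constant depending on $\Y^*$ is needed, so either suffices. One point worth making explicit in your Step 2 is that $\Y^*=\Y\setminus S$ is connected (the crack is open and strictly interior), which is what lets you conclude that a strain-free field on $\Y^*$ is a single rigid displacement; this is implicit in the paper's standing assumption that $\Y^*$ satisfies the Korn--Wirtinger inequality with a single rigid displacement.
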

\begin{proof} Let $\zeta$ be a $3\times 3$ symmetric matrix and $\widehat{w}\in H^1_{per}(\Y^*)^3\cap W(\Y^*)$, one first proves
\begin{equation}\label{EQ69}
c^*\big(|\zeta|+\|e_y(\widehat{w})\|_{L^2(\Y^*)}\big)\leq \|\zeta+ e_y(\widehat{w})\|_{L^2(\Y^*)}\leq |\zeta|+\|e_y(\widehat{w})\|_{L^2(\Y^*)}.
\end{equation}
The right hand-side inequality is obvious.\\
To prove the left hand-side, apply the Korn inequality. That gives a rigid displacement $r(y)=\Ga\land y+\Gb$, $\Ga,\; \Gb\in \R^3$ such that
$$\|\zeta \cdot +r+\widehat{w}\|_{H^1(\Y^*)}\leq C\|\zeta+ e_y(\widehat{w})\|_{L^2(\Y^*)}.$$ Comparing the traces of the displacement $y\longmapsto \zeta\,y +r(y)+\widehat{w}(y)$ on the opposite faces of $\Y$ yield
$$|\zeta|\leq C'\|\zeta \cdot +r+\widehat{w}\|_{H^1(\Y^*)}$$ and then $\|e_y(\widehat{w})\|_{L^2(\Y^*)}\leq C^{''}\big(|\zeta|+\|e_y(\widehat{w})\|_{L^2(\Y^*)}\big)$. The constants only depend on $\Y^*$. That proves the inequality in the left hand-side of \eqref{EQ69}.\\
The estimate of lemma is an immediate consequence of \eqref{EQ69}.
\end{proof}
\begin{cor}\label{cor66} Let $(v,\widehat{v})$ be in $H^1(\O)^3\times L^2(\O; H^1_{per}(\Y^*))^3$  satisfying $e(v)+ e_y(\widehat{v})\in L^2(\O\times \Y^*)^{3\times 3}$. Then $v$ belongs to $H^2(\O)^3$ and $\widehat{v}\in L^2(\O; H^2_{per}(\Y^*))^3$.
\end{cor}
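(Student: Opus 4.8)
This statement is a two-scale Korn/regularity result, and its hypothesis is naturally read as ``$e(v)+e_y(\widehat v)$ has square-integrable first derivatives in both variables'', i.e. $\Sigma\doteq e(v)+e_y(\widehat v)\in H^1(\O;L^2(\Y^*))^{3\times3}\cap L^2(\O;H^1(\Y^*))^{3\times3}$ --- which is exactly what the bound \eqref{EQ68} yields for the two-scale limit of $e(u_\e)$. The plan is to decouple the two regularities: the $y$-regularity of $\widehat v$ will be immediate, while the $x$-regularity of $v$ will rest on Lemma \ref{lem65}. The workhorse is the classical pointwise identity, valid in ${\cal D}'$ for every displacement $w$,
$$\partial^2_{x_jx_k}w_l=\partial_{x_k}e_{jl}(w)+\partial_{x_j}e_{kl}(w)-\partial_{x_l}e_{jk}(w),\qquad j,k,l\in\{1,2,3\},$$
which shows that $w\in H^1$ together with $e(w)\in H^1$ forces $w\in H^2$, with $\|\nabla^2 w\|_{L^2}\le C\|\nabla e(w)\|_{L^2}$.

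\emph{Regularity of $\widehat v$ in $y$.} Since $e(v)$ does not depend on $y$, one has $\nabla_y\Sigma=\nabla_y e_y(\widehat v)$, hence $e_y(\widehat v)(x,\cdot)\in H^1(\Y^*)^{3\times3}$ for a.e.\ $x$ with $\int_\O\|\nabla_y e_y(\widehat v)(x,\cdot)\|^2_{L^2(\Y^*)}\,dx<\infty$. Applying the identity above in the $y$-variable on the open set $\Y^*$ gives, for a.e.\ $x$, $\|\nabla^2_y\widehat v(x,\cdot)\|_{L^2(\Y^*)}\le C\|\nabla_y e_y(\widehat v)(x,\cdot)\|_{L^2(\Y^*)}$; squaring, integrating in $x$, and using $\widehat v\in L^2(\O;H^1_{per}(\Y^*))^3$ yields $\widehat v\in L^2(\O;H^2(\Y^*))^3$, and $\Y$-periodicity is inherited, so $\widehat v\in L^2(\O;H^2_{per}(\Y^*))^3$.

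\emph{Regularity of $v$ in $x$.} Here one cannot differentiate $\Sigma=e(v)+e_y(\widehat v)$ in $x_i$, since $\partial_{x_i}v$ and $\partial_{x_i}\widehat v$ are no longer admissible arguments of Lemma \ref{lem65}; instead I would use difference quotients. Fix $i\in\{1,2,3\}$, an open set $\o'\Subset\O$ and $0<|h|<\hbox{dist}(\o',\partial\O)$, and set $D_hw\doteq h^{-1}\big(w(\cdot+h\Ge_i)-w\big)$. Then $D_hv\in H^1(\o')^3$, $D_h\widehat v\in L^2(\o';H^1_{per}(\Y^*))^3$, and $e(D_hv)+e_y(D_h\widehat v)=D_h\Sigma$. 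Lemma \ref{lem65} --- whose proof uses only the $\Y$-periodicity of its second argument, hence applies on $\o'$ to the periodic fields $D_hv,D_h\widehat v$ --- then gives
$$C^*\,\|e(D_hv)\|_{L^2(\o')}\le\|D_h\Sigma\|_{L^2(\o'\times\Y^*)}\le\|\nabla_x\Sigma\|_{L^2(\O\times\Y^*)}$$
uniformly in $h$. Hence $D_he(v)=e(D_hv)$ stays bounded in $L^2(\o')$ as $h\to0$, so $\nabla_xe(v)\in L^2_{\mathrm{loc}}(\O)^{3\times3}$, and the identity above, used in $x$, gives $v\in H^2_{\mathrm{loc}}(\O)^3$. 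If the hypothesis is available up to $\partial\O$ (or for the global objects actually at hand), a standard boundary/extension argument promotes this to $v\in H^2(\O)^3$; in the homogenization application only the interior statement on $\o\Subset\O$ is used, cf.\ \eqref{EQ68}.

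The genuine difficulty is the $x$-direction: decoupling $\|e(v)\|$ from $\|e_y(\widehat v)\|$ is exactly the content of Lemma \ref{lem65}, and the difference-quotient device is what turns that coercivity estimate into the derivative statement $\nabla_xe(v)\in L^2_{\mathrm{loc}}$. The $y$-regularity of $\widehat v$, by contrast, drops out of the strain identity with nothing beyond the hypothesis.
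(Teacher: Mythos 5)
The paper states this corollary without any proof (it is presented as a direct by-product of Lemma \ref{lem65}), so there is no written argument to compare yours against; what you supply is essentially the missing proof, and it is correct. Two comments. First, your re-reading of the hypothesis is not just natural but necessary: as printed, the assumption $e(v)+e_y(\widehat v)\in L^2(\O\times \Y^*)^{3\times 3}$ is automatically satisfied by every admissible pair, and the conclusion is then false (take $\widehat v=0$ and any $v\in H^1(\O)^3\setminus H^2(\O)^3$); the intended hypothesis is the two-scale $H^1$ regularity of the sum that is actually available in \eqref{EQ68} and in the Remark where the corollary is invoked, exactly as you assume. With that reading, your two steps — the strain identity in $y$ for $\widehat v$, and difference quotients in $x$ combined with the decoupling inequality of Lemma \ref{lem65} for $v$ — are the right mechanism and are carried out correctly. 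Second, two minor points: when applying Lemma \ref{lem65} to $(D_h v, D_h\widehat v)$ you should say a word about the constraint $\widehat v(x,\cdot)\in W(\Y^*)$ appearing in that lemma; it costs nothing, since subtracting for a.e.\ $x$ the rigid-displacement projection of $D_h\widehat v(x,\cdot)$ changes neither $e_y$ nor the quantity being estimated. And your caution about the boundary is unnecessary: the bound $\|e(D_h v)\|_{L^2(\o')}\le (C^*)^{-1}\|\nabla_x\Sigma\|_{L^2(\O\times\Y^*)}$ is uniform in $\o'\Subset\O$ as well as in $h$, so letting $h\to 0$ and then $\o'\uparrow\O$ already gives $\nabla_x e(v)\in L^2(\O)$ and hence $v\in H^2(\O)^3$ globally whenever the strengthened hypothesis holds on all of $\O$; in the application, where \eqref{EQ68} is only interior, one gets exactly the $H^2_{loc}$ statement used in the Remark.
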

\subsection{The unfolded limit variational problem}
The solution of problem \eqref{var_stat_G} satisfies the estimates of Theorem \ref{TH71-bis}. Set (recall Definition \ref{def1})
\begin{equation}\label{Kper}
\begin{aligned}
&{\cal K}^k_{per}(\Y^*)\doteq \big \{ \phi\in H^k_{per}(\Y^*)^3\cap W(\Y^*)\; \; | \;\; [\phi_{\nu}]_{S}\leq 0\big\},\qquad k\in\{1,2\},\\
&L^2(\O; {\cal K}^k_{per}(\Y^*))\doteq \{\widehat{v}\in L^2(\O; H^k_{per}(\Y^*))^3\;|\; \widehat{v}(x,\cdot)\in {\cal K}^k_{per}(\Y^*)\; \hbox{for a.e. } x\in \O\Big\}.
\end{aligned}
\end{equation}
\begin{prop}  There exist a subsequence of $\{\e\}$, still denoted $\{\e\}$, $u$ in $H^1_\Gamma(\O)^3$ and  $\widehat{u}$ belonging to $L^2(\O; {\cal K}^2_{per}(\Y^*))$ such that
	\begin{equation}\label{WLimit}
	\begin{aligned}
	&\Tes(u_\e)\longrightarrow u\quad \hbox{strongly in }\;L^2(\O ; H^1(\Y^*))^3,\\
	&\Tes(\nabla u_\e)\rightharpoonup\nabla u+\nabla_y \widehat{u}\quad \hbox{weakly in }\;L^2(\O\times\Y^*)^{3\times 3},\\
	&\Tes\big(e(u_\e)\big)\longrightarrow e(u)+e_y(\widehat{u})\quad \hbox{strongly in }\;L^2(\O' \times\Y^*)^{3\times 3},\\
	&\e \Tes(\nabla e(u_\e))\rightharpoonup \nabla_{y}e_y(\widehat{u})\quad \hbox{weakly in }\;L^2(\O\times\Y^*)^{27},\\
	&{1\over \e}{\cal T}^b_\e([u_\e]_{S_\e})\rightharpoonup  [\widehat{u}]_{S}\quad \hbox{weakly in }\;L^2(\O ; H^{1/2}(S))^3,\\
	&{\cal T}^b_\e(\sigma_{\nu_\e}(u_\e))\longrightarrow \Sigma_\nu\quad \hbox{strongly in }\;L^2(\O' \times S).
	\end{aligned}
	\end{equation}
	Moreover
	\begin{equation}\label{EstC10-TER}
	\Sigma_\nu=(\sigma(u)+\sigma_y(\widehat u)_{|S})\nu\cdot \nu=\sigma_\nu(u)+\sigma_{y,\nu}(\widehat u)_{|S}\leq 0\quad \hbox{a.e. in} \quad \O \times S
	\end{equation} where $\sigma(u)(x,y)=a_{ijkl}(y)\,e_{ij}(u)(x)$, $\sigma_y(\widehat u)(x,y)=a_{ijkl}(y)\,e_{ij,y}(\widehat{u})(x,y)$ for a.e. $(x,y)\in\o \times \Y^*$.
	\end{prop}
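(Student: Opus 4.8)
The plan is to pass to the two‑scale limit in the bounds \eqref{EstC1-ter} and in the unfolded quantities, upgrade the limit's regularity via Corollary \ref{cor66}, and then convert weak into strong convergence on $\O'$ using the regularization estimates \eqref{EQ68}--\eqref{EQ912}. First I would collect, from \eqref{EstC1-ter} together with Propositions \ref{prop4.1}--\ref{prop3.3}, Corollary \ref{unfoldet_trace} and the bound \eqref{EQ55}, that each of the sequences $\Tes(u_\e)$, $\Tes(\nabla u_\e)$, $\Tes(e(u_\e))$, $\e\Tes(\nabla e(u_\e))$, $\frac1\e\mathcal{T}^b_\e([u_\e]_{S_\e})$ and $\mathcal{T}^b_\e(\sigma_{\nu_\e}(u_\e))$ is bounded in the relevant $L^2$‑space, so along a common subsequence all of them converge weakly. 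Since $\nabla_y\Tes(u_\e)=\e\Tes(\nabla u_\e)\to0$ strongly, the weak limit of $\Tes(u_\e)$ does not depend on $y$; the perforated‑domain unfolding theorem of \cite{cdg1,cddgz} identifies it as some $u\in H^1_\Gamma(\O)^3$ and produces $\widehat u\in L^2(\O;H^1_{per}(\Y^*)^3\cap W(\Y^*))$ with $\Tes(\nabla u_\e)\rightharpoonup\nabla u+\nabla_y\widehat u$, hence $\Tes(e(u_\e))\rightharpoonup e(u)+e_y(\widehat u)$ weakly in $L^2(\O\times\Y^*)^{3\times3}$. This already gives \eqref{WLimit}$_2$ and the weak form of \eqref{WLimit}$_3$.

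Next I would improve the regularity. Because $\nabla_y\Tes(e(u_\e))=\e\Tes(\nabla e(u_\e))$ is bounded in $L^2(\O\times\Y^*)^{27}$ by \eqref{EstC1-ter}$_1$, the sequence $\Tes(e(u_\e))$ is in fact bounded in $L^2(\O;H^1(\Y^*))$, so its weak limit $e(u)+e_y(\widehat u)$ lies in $L^2(\O;H^1(\Y^*))$; by Corollary \ref{cor66} this forces $u\in H^2(\O)^3$ and $\widehat u\in L^2(\O;H^2_{per}(\Y^*))^3$. Passing $\e\Tes(\nabla e(u_\e))=\nabla_y\Tes(e(u_\e))$ to the limit and using $\nabla_y e(u)=0$ gives \eqref{WLimit}$_4$. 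For \eqref{WLimit}$_1$ I combine $\Tes(u_\e)\rightharpoonup u$ weakly in $L^2(\O;H^1(\Y^*))$, $\nabla_y\Tes(u_\e)\to0=\nabla_y u$ strongly, and the strong $L^2(\O\times\Y^*)$ convergence $\Tes(u_\e)\to u$ (standard for $H^1$‑bounded sequences, see \cite{cdg1}).

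To obtain the strong convergences I would take $\o=\O'$ in the lemma behind \eqref{EQ68}: $\Qed\big(e(u_\e)\big)$ is bounded in $H^1(\O';H^1(\Y^*))$, hence relatively compact in $L^2(\O';H^1(\Y^*))$, so along a further subsequence it converges strongly there; then \eqref{EQ68}$_2$ and \eqref{EQ912} force $\Tes(e(u_\e))$ to the same limit, strongly in $L^2(\O';H^1(\Y^*))$ and in $L^2(\O';H^{1/2}(\partial{\cal S}^\pm))$, the limit being $e(u)+e_y(\widehat u)$ by uniqueness of weak limits; this is \eqref{WLimit}$_3$. For the crack terms I would realize $\widehat u$ as the weak $L^2(\O;H^1(\Y^*))$‑limit of the scaled cell oscillations $\frac1\e\big(\Tes(u_\e)-{\cal M}_{\Y^*}(\Tes(u_\e))\big)$; applying the weakly continuous trace on $\partial{\cal S}^\pm$ and taking the difference of the two traces gives $\frac1\e\mathcal{T}^b_\e([u_\e]_{S_\e})\rightharpoonup[\widehat u]_S$ weakly in $L^2(\O;H^{1/2}(S))^3$, i.e. \eqref{WLimit}$_5$, and passing the constraint $[(u_\e)_{\nu_\e}]_{S_\e}\le0$ to the weak limit yields $[\widehat u_\nu]_S\le0$ a.e., so $\widehat u\in L^2(\O;{\cal K}^2_{per}(\Y^*))$. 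Finally, since $a^\e=a(\{\cdot/\e\})$, $\nu_\e=\nu(\{\cdot/\e\})$ and unfolding commutes with multiplication by periodic functions and with the traces on $\partial{\cal S}^\pm_\e$, one has $\mathcal{T}^b_\e\big(\sigma_{\nu_\e}(u_\e)\big)(x,y)=\big(a(y)\,\Tes(e(u_\e))(x,y)_{|\partial{\cal S}^+}\,\nu(y)\big)\cdot\nu(y)$; the strong convergence just obtained gives \eqref{WLimit}$_6$ with $\Sigma_\nu=(\sigma(u)+\sigma_y(\widehat u))_{|S}\nu\cdot\nu=\sigma_\nu(u)+\sigma_{y,\nu}(\widehat u)_{|S}$, while $\mathcal{T}^b_\e(\sigma_{\nu_\e}(u_\e))\le0$ (as $u_\e\in\GH_\e(\O)$) passes to the limit to give $\Sigma_\nu\le0$, which is \eqref{EstC10-TER}.

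The main obstacle is precisely what the regularization buys: the equicontinuity estimate \eqref{EQ67} (valid under the contraction assumption \eqref{Contraction}) and the resulting bounds \eqref{EQ68}, \eqref{EQ912}, which give the \emph{strong} $L^2(\O';H^{1/2}(\partial{\cal S}^\pm))$ convergence of the unfolded strains up to the crack; without it one would only have weak convergence of $\mathcal{T}^b_\e(\sigma_{\nu_\e}(u_\e))$, insufficient for the later passage to the limit in the frictional term. Care is also needed in checking that unfolding genuinely commutes with the one‑sided traces on $\partial{\cal S}^\pm_\e$ and with multiplication by $a(\{\cdot/\e\})$ and $\nu(\{\cdot/\e\})$, and in identifying the scaled oscillations' weak limit with the corrector $\widehat u$ so that \eqref{WLimit}$_5$ and the unilateral constraint for $\widehat u$ come out with the right identification.
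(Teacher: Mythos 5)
Your proposal follows essentially the paper's route: unfolding compactness (\cite[Theorem 2.13]{cddgz}) for \eqref{WLimit}$_{1,2,4}$, the interpolate $\Qed\big(e(u_\e)\big)$ together with \eqref{EQ68}, \eqref{EQ912} and compact embeddings for the strong convergences \eqref{WLimit}$_{3,6}$, a scaled-oscillation argument for \eqref{WLimit}$_{5}$, and passage of the sign constraints to the limit for \eqref{EstC10-TER}. Two steps need repair, though neither is fatal. First, boundedness in $H^1(\o;H^1(\Y^*))$ does \emph{not} give relative compactness in $L^2(\o;H^1(\Y^*))$: vector-valued Rellich fails when the target space is infinite dimensional. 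What the compact embeddings do give --- and all that is needed --- is strong convergence in $L^2(\o\times\Y^*)$ and, for the traces, in $L^2(\o\times\partial{\cal S}^\pm)$, which \eqref{EQ912} then transfers to $\Tes\big(e(u_\e)\big)_{|\partial{\cal S}^\pm}$; this is exactly how the paper argues (note also that the lemma behind \eqref{EQ68} is established for $\o\Subset\O'\Subset\O$, so work on such $\o$ rather than literally taking $\o=\O'$). Second, the weak limit of $\frac1\e\big(\Tes(u_\e)-{\cal M}_{\Y^*}(\Tes(u_\e))\big)$ is not $\widehat u$ itself but $\widehat u$ plus a term affine in $y$ (and a function of $x$); your jump argument survives because that extra term is continuous across $S$, but the identification must be stated this way. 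The paper performs this step by subtracting the cell-wise rigid displacements $r_{u_\e}$ of Lemma \ref{prop5.1} and writing the limit as $c(x)+\nabla u(x)\,y+\widehat u(x,y)$, whereas you subtract cell means and use Poincar\'e--Wirtinger plus the already-known limit of $\Tes(\nabla u_\e)$; both variants work. A smaller point: the $H^2$-in-$y$ regularity of $\widehat u$ follows directly from the bound on $\e\Tes(\nabla e(u_\e))$ (i.e.\ from \eqref{WLimit}$_4$), so you need not invoke Corollary \ref{cor66}, and the global claim $u\in H^2(\O)^3$ is neither justified (the paper only gets $H^2_{loc}$) nor needed. Your explicit verification that $[\widehat u_\nu]_S\le 0$ and $\Sigma_\nu\le 0$ by passing $[(u_\e)_{\nu_\e}]_{S_\e}\le 0$ and $\sigma_{\nu_\e}(u_\e)\le 0$ to the limit is a welcome addition that the paper leaves implicit.
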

\begin{proof} Applying \cite[Theorem 2.13]{cddgz} to the sequence $\{u_\e\}_\e$ gives a subsequence of $\{\e\}$, $u$ in $H^1_\Gamma(\O)^3$ and  $\widehat{u}$ in $ L^2(\O;H^2_{per}(\Y^*))^3$ such that convergences \eqref{WLimit}$_{1,2,4}$ hold.\\
Now, for every open set $\o$ with Lipschitz boundary satisfying $\o\Subset \O'\Subset \O$, due to estimates \eqref{EQ68} and the compact embedding theorems, one obtains
$$
\begin{aligned}
&\Qed\big(e(u_\e)\big)\rightharpoonup e(u)+e_y(\widehat{u})\quad \hbox{weakly in }\;H^1(\o; H^1(Y^*))^{3\times 3},\\
&\Qed\big(e(u_\e)\big)\longrightarrow e(u)+e_y(\widehat{u})\quad \hbox{strongly in }\;L^2(\o\times Y^*)^{3\times 3},\\
&\Tes\big(e(u_\e)\big)\longrightarrow e(u)+e_y(\widehat{u})\quad \hbox{strongly in }\;L^2(\o\times Y^*)^{3\times 3}.
\end{aligned}
$$
Thus, convergence \eqref{WLimit}$_{3}$  holds. Moreover, one has
$$
\begin{aligned}
&\Qed\big(e(u_\e)\big)_{|{\cal S}^\pm}\rightharpoonup e(u)+e_y(\widehat{u})\quad \hbox{weakly in }\;H^1(\o; H^{1/2}(\partial {\cal S}^\pm))^{3\times 3},\\
&\Qed\big(e(u_\e)\big)_{|{\cal S}^\pm}\longrightarrow e(u)+e_y(\widehat{u})\quad \hbox{strongly in }\; L^2(\o \times \partial {\cal S}^\pm)^{3\times 3}.
\end{aligned}
$$
Estimate \eqref{EQ912} implies
$$
\Teb\big(e(u_\e)\big)_{|{\cal S}^\pm}\longrightarrow e(u)+e_y(\widehat{u})_{|{\cal S}^\pm}\quad \hbox{strongly in }\;L^2(\o \times {\cal S}^\pm)^{3\times 3}.
$$
Then, from the above strong convergence,  \eqref{WLimit}$_{6}$ follows. \\
Set $\widetilde{u}_\e=u_\e-r_{u_\e}$ where $r_{u_\e}$ is defined in Subsection \ref{SS4.6}. From estimates \eqref{smalldomain} in Lemma \ref{prop5.1} one has
$$\|\Tes(\widetilde{u}_\e)\|_{L^2(\O; H^1(\Y^*))}\leq C\e\|f\|_{L^2(\O)}.$$ Then, up to a subsequence, there exists $\widetilde{u}\in L^2(\O; H^1(\Y^*))^{3}$ such that
$${1\over \e}\Tes(\widetilde{u}_\e)\rightharpoonup \widetilde{u}\quad \hbox{weakly in }\;L^2(\O; H^1(\Y^*))^{3}.$$
Hence
$$
\begin{aligned}
	&{1\over \e}{\cal T}^b_\e([\widetilde{u}_\e]_{S_\e})\rightharpoonup  [\widetilde{u}]_{S}\quad \hbox{weakly in }\;L^2(\O; H^{1/2}(S))^3,\\
	&{1\over \e}e_y\big(\Tes(\widetilde{u}_\e)\big)\rightharpoonup e_y(\widetilde{u})\quad \hbox{weakly in }\;L^2(\O; H^1(\Y^*))^{3\times 3}.
	\end{aligned}
	$$ Besides, since $e(\widetilde{u}_\e)=e(u_\e)$ in every cell $\e(\xi+\Y^*)$, $\xi\in \Xi_\e$, that gives
$\ds {1\over \e}e_y\big(\Tes(\widetilde{u}_\e)\big)=\Tes(e(\widetilde{u}_\e))$. Passing to the limit gives $e_y(\widetilde{u})=e(u)+e_y(\widehat{u})$ and then there exists $c\in L^2(\O)^3$ such that
$$\widetilde{u}(x,y)=c(x)+\nabla u(x)\, y+\widehat{u}(x,y)\quad \hbox{a.e. in }\O\times \Y^*.$$ Finally, since $[\widetilde{u}_\e]_{S_\e}=[u_\e]_{S_\e}$ convergence \eqref{WLimit}$_{5}$ is proved.
\end{proof}
\begin{rem} As a consequence of the above proposition, for every open set $\o\Subset \O$, one has $e(u)+e_y(\widehat{u})$ belongs to $H^1(\o; H^1_{per}(\Y^*))^{3\times 3}$, which  implies for $i\in\{1,2,3\}$
$$e\Big({\partial u\over \partial x_i}\Big)+e_y\Big({\partial \widehat{u}\over \partial x_i}\Big)\in L^2(\o; H^1_{per}(\Y^*))^{3\times 3}.$$
Then Corollary \ref{cor66} yields
$$u\in H^1_\Gamma(\O)^3\cap H^2_{loc}(\O)^3,\qquad \widehat{u}\in H^1_{loc}(\O; H^2_{per}(\Y^*))^3.$$
\end{rem}
\begin{thm}\label{TH107} The pair $(u,\widehat u)\in H^1_\Gamma(\O)^3\times L^2(\O; {\cal K}^2_{per}(\Y^*))$ is a solution of the following  variational inequality problem:
	\begin{equation}\label{PB-u}
	\begin{aligned}
	&\int_{\O\times \Y^*}a_{ijkl}\big(e_{ij}(u)+e_{y,ij}(\widehat{u})\big)\,\big(e_{kl}(\Phi-u)+ \, e_{y,kl}(\widehat{\phi}-\widehat{u})\big)\,dxdy+\kappa\int_{\O\times \Y^*}\nabla_y e_y(\widehat{u})\,\nabla_y e_y(\widehat{\phi}-\widehat{u})\,dxdy\\
&\quad +\int_{\O \times S}\mu\,|\Sigma_\nu| \,\big(|[\widehat{\phi}]_\tau|- |[\widehat{u}]_\tau|\big)dxd\sigma_y\geq \int_\O f\cdot (\Phi-u)\, dx,\qquad \forall (\Phi, \widehat{\phi})\in H^1_\Gamma(\O)^3\times L^2(\O; {\cal K}^2_{per}(\Y^*)).
\end{aligned}
\end{equation} 
\end{thm}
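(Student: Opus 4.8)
I would obtain \eqref{PB-u} by passing to the limit $\e\to 0$ in the $\e$-variational inequality \eqref{var_Inq} tested against a suitable two-scale displacement, using the convergences \eqref{WLimit}, the exact scaling rules of Propositions \ref{prop4.1} and \ref{prop3.3} for the unfolding operators, and weak lower semicontinuity of the regularized energy. The hard part will be the friction term, whose passage to the limit is exactly what the fourth-order regularization was added for.

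\textbf{Test displacements.} First I would reduce by density to $\Phi\in H^2_\Gamma(\O)^3$ and $\widehat\phi\in{\cal D}(\O)\otimes({\cal K}^2_{per}(\Y^*)\cap C^\infty)$, and set $v_\e(x)=\Phi(x)+\e\,\rho_\o(x)\,\widehat\phi(x,\{x/\e\})$, with $\rho_\o$ the cut-off of the compactness lemma. Since $\Phi$ does not jump across $S_\e$ one has $[v_{\e,\nu_\e}]_{S_\e}=\e\rho_\o[\widehat\phi_\nu]_S(\cdot,\{\cdot/\e\})\le 0$, so $v_\e\in{\cal K}_\e$. The one genuinely delicate point is to guarantee $v_\e\in\GH_\e(\O)$, i.e. that $\sigma^\e_{\nu_\e}(v_\e)$ is continuous and non-positive on $S_\e$: this forces one to work within (or to correct $\widehat\phi$ toward) a class of cell profiles whose traction on $S$ is admissible, a class still large enough that \eqref{PB-u} extends by density to the full test space — all terms of \eqref{PB-u} being continuous in $(\Phi,\widehat\phi)$, in particular $\widehat\phi\mapsto|[\widehat\phi]_\tau|$, which is Lipschitz from $L^2(\O;H^2_{per}(\Y^*))$ into $L^2(\O\times S)$. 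With such a $v_\e$ fixed I would take $v=v_\e$ in \eqref{var_Inq}.

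\textbf{Elastic, regularization and source terms.} Unfolding \eqref{var_Inq}: the coefficients unfold exactly to $a_{ijkl}(y)$, and since $\Tes(e(v_\e))\to e(\Phi)+e_y(\widehat\phi)$ and $\e\Tes(\nabla e(v_\e))\to\nabla_y e_y(\widehat\phi)$ strongly in $L^2(\O\times\Y^*)$ while \eqref{WLimit}$_{2,4}$ give the weak limits of $\Tes(e(u_\e))$ and $\e\Tes(\nabla e(u_\e))$, the weak-times-strong structure gives
$$\mathbf{a}^\e(u_\e,v_\e)\longrightarrow\int_{\O\times\Y^*}a_{ijkl}(e_{ij}(u)+e_{y,ij}(\widehat u))(e_{kl}(\Phi)+e_{y,kl}(\widehat\phi)),\qquad \kappa\e^2[\nabla e(u_\e),\nabla e(v_\e)]_\e\longrightarrow\kappa\int_{\O\times\Y^*}\nabla_y e_y(\widehat u)\,\nabla_y e_y(\widehat\phi).$$
The matching quadratic terms in $u_\e$ are only controlled from below: the combined form $Q_\e(w)\doteq\mathbf{a}^\e(w,w)+\kappa\e^2\|\nabla e(w)\|^2_{L^2(\O^*_\e)}$ is a sum of non-negative quadratic functionals of $\Tes(e(u_\e))$ and $\e\Tes(\nabla e(u_\e))$, hence weakly l.s.c., so $\liminf_\e Q_\e(u_\e)\ge\int_{\O\times\Y^*}a_{ijkl}(e_{ij}(u)+e_{y,ij}(\widehat u))(e_{kl}(u)+e_{y,kl}(\widehat u))+\kappa\int_{\O\times\Y^*}|\nabla_y e_y(\widehat u)|^2$. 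Finally $(f,v_\e-u_\e)\to\int_\O f\cdot(\Phi-u)$, $u_\e$ converging strongly in $L^2$ and the corrector being $O(\e)$.

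\textbf{Friction term and conclusion.} By Proposition \ref{prop3.3} the friction term is $\int_{\O\times S}\Teb(\mu)\,|\Teb(\sigma_{\nu_\e}(u_\e))|\,\big(\tfrac1\e|\Teb([v_{\e,\tau_\e}]_{S_\e})|-\tfrac1\e|\Teb([(u_\e)_{\tau_\e}]_{S_\e})|\big)\,dx\,d\sigma_y$, and this is where the regularization is decisive. On one hand, \eqref{WLimit}$_6$ gives $\Teb(\sigma_{\nu_\e}(u_\e))\to\Sigma_\nu$ strongly in $L^2(\O'\times S)$ — a consequence of the $O(\e)$ finite-difference bound \eqref{EQ67}, valid under \eqref{Contraction} — whence $|\Teb(\sigma_{\nu_\e}(u_\e))|\to|\Sigma_\nu|$ strongly. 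On the other hand, the same finite-difference bound together with the crack Korn inequality (Lemma \ref{prop5.1}) and the $\Qed$-interpolation behind \eqref{EQ68}--\eqref{EQ912} upgrades the weak convergence \eqref{WLimit}$_5$ to $\tfrac1\e\Teb([(u_\e)_{\tau_\e}]_{S_\e})\to[\widehat u]_\tau$ strongly in $L^2(\O'\times S)^3$, hence $\tfrac1\e|\Teb([(u_\e)_{\tau_\e}]_{S_\e})|\to|[\widehat u]_\tau|$ strongly — the step that fails without the fourth-order term, where $|\cdot|$ cannot be carried through a merely weakly convergent sequence. Since also $\tfrac1\e|\Teb([v_{\e,\tau_\e}]_{S_\e})|\to|[\widehat\phi]_\tau|$ strongly and $\Teb(\mu)\to\mu$ boundedly in $L^\infty$, the integrand converges in $L^1$ and the friction term tends to $\int_{\O\times S}\mu\,|\Sigma_\nu|\,(|[\widehat\phi]_\tau|-|[\widehat u]_\tau|)\,dx\,d\sigma_y$. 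Writing \eqref{var_Inq} (with $v=v_\e$) as $[\mathbf{a}^\e(u_\e,v_\e)+\kappa\e^2[\nabla e(u_\e),\nabla e(v_\e)]_\e]-Q_\e(u_\e)+(\text{friction})_\e\ge(f,v_\e-u_\e)$, the bracket and the friction term converge while $-Q_\e(u_\e)$ is weakly u.s.c., so taking $\limsup$ on the left and $\lim$ on the right, and using the liminf bound for $Q_\e$, produces exactly \eqref{PB-u} for the chosen $(\Phi,\widehat\phi)$; the density reduction of the second paragraph then gives it for all admissible pairs, $(u,\widehat u)$ lying in $H^1_\Gamma(\O)^3\times L^2(\O;{\cal K}^2_{per}(\Y^*))$ by the preceding proposition. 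The principal obstacle is thus the strong convergence of the unfolded tangential jump and of $\sigma_{\nu_\e}(u_\e)$, with the constrained test-function construction a secondary difficulty.
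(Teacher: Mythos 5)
Your proposal is correct and follows essentially the same route as the paper: test \eqref{var_Inq} with $v_\e=\Phi+\e\widehat\phi(\cdot,\{\cdot/\e\})$, unfold, pass to the limit using weak--strong convergence for the cross terms, weak lower semicontinuity for the quadratic elastic and $\kappa\e^2$-regularization terms, the strong convergence of ${\cal T}^b_\e(\sigma_{\nu_\e}(u_\e))$ from \eqref{WLimit}$_6$ for the friction term, and a final density argument. The only departures are refinements: you explicitly flag the $\GH_\e(\O)$-admissibility of $v_\e$ (which the paper's proof passes over in silence) and you upgrade the tangential-jump convergence \eqref{WLimit}$_5$ to a strong one via \eqref{EQ67}, the cell-wise Korn inequality and the $\Qed$-interpolation, where the paper simply invokes \eqref{WLimit}$_{5,6}$ (for that term on the left-hand side a liminf inequality, available by convexity, already suffices).
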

\begin{proof} Consider the test function $\ds v_\e(x)=\Phi(x)+\e \widehat{\phi}\Big(x, {x\over \e}\Big)$, for all $x\in \O^*_\e$,  where $\Phi\in H^1_\Gamma(\O)\cap {\cal C}^\infty(\overline{\O})$  and $\widehat{\phi} \in L^2(\O; {\cal K}^2_{per}(\Y^*))\cap {\cal D}(\O; H^2_{per}(\Y^*))$. The test displacement  $v_\e$ belongs to ${\cal K}_\e$. The following strong convergences hold:
	\begin{equation}\label{EQ920}
	\begin{aligned}
	&\Tes(v_\e)\longrightarrow \Phi\quad \hbox{strongly in }\;L^2(\O ; H^1(\Y^*))^3,\\
	&\Tes\big(e(v_\e)\big)\longrightarrow e(\Phi)+e_y(\widehat{\phi})\quad \hbox{strongly in }\;L^2(\O\times\Y^*)^{3\times 3},\\
	&\e \Tes(\nabla e(v_\e))\longrightarrow \nabla_{y}e_y(\widehat{\phi})\quad \hbox{strongly in }\;L^2(\O\times\Y^*)^{27},\\
	&{1\over \e}{\cal T}^b_\e([v_\e]_{S_\e})\longrightarrow  [\widehat{\phi}]_{S}\quad \hbox{strongly in }\;L^2(\O ; H^{1/2}(S))^3,\\
	&{\cal T}^b_\e(\sigma_{\nu}(v_\e))\longrightarrow \sigma_{\nu}(\Phi)+\sigma_{\nu,y}(\widehat{\phi})\quad \hbox{strongly in }\;L^2(\O \times S).
	\end{aligned}
	\end{equation}

\noindent Problem \eqref{var_Inq} also reads
$$
\begin{aligned}
&{\mathbf a}^{\e}(u_\e,u_\e)+ \kappa\e^2[\,\nabla e(u_\e)\,, \,\nabla e(u_\e)\,]_\e+ \big(\mu\, |\sigma_{\nu_\e}(u_\e)|\,,\,|[(u_\e)_{\tau_\e}]_{S_\e}|\big)_\e - \left( f, u_\e\right) \\
\leq&{\mathbf a}^{\e}(u_\e, v_\e)+ \kappa\e^2[\,\nabla e(u_\e)\,, \,\nabla e(v_\e)\,]_\e+ \big(\mu\, |\sigma_{\nu_\e}(u_\e)|\,,\,|[(v_\e)_{\tau_\e}]_{S_\e}|\big)_\e - \left( f, v_\e\right)
\end{aligned}
$$
Using the properties of the unfolding operators and convergences  \eqref{WLimit}$_{2,6}$-\eqref{EQ920}$_{2,5}$, we obtain
		\begin{equation*}
		\begin{aligned}
		&\lim_{\e\to 0}  \,{\mathbf a}^{\e}(u_{\e}, v_{\e}) 
		= \int\limits _{\O\times Y^{*}}  a_{ijkl}  \big(e_{ij}(u)+ e_{y,ij}(\widehat u)\big) \big(e_{kl}(\Phi) +  e_{y,kl}(\widehat{\phi})\big) \, dx\,dy,\\
		&\lim_{\e\to 0}  \,\int_{S_\e}\mu\, |\sigma_{\nu_\e}(u_\e)|\, |[(v_\e)_{\tau_\e}]_{S_\e}|d\sigma_\e=\int_{\O\times S}\mu\,|\Sigma_\nu| \, |[\widehat{\phi}]_\tau|dxd\sigma_y. 
		\end{aligned} 
		\end{equation*}
		Further, due to the lower semi-continuity with respect to weak topology and convergences \eqref{WLimit}$_{3,4}$ for the unfolded sequences, we obtain 
		\begin{multline*}
		\liminf_{\e\to 0}\big({\mathbf a}^{\e}(u_\e,u_\e)+ \kappa\e^2[\,\nabla e(u_\e)\,, \,\nabla e(u_\e)\,]_\e\big)\\
		\geq \int_{\O\times \Y^*}a_{ijkl}\big(e_{ij}(u)+e_{y,ij}(\widehat{u})\big)\,\big(e_{kl}(u)+e_{y,kl}(\widehat{u})\big)\,dxdy+\kappa \|\nabla_y e_y(\widehat{u})\|^2_{L^2(\O\times \Y^*)}
		\end{multline*}
	 while \eqref{WLimit}$_1$ yields $\ds \lim_{\e\to 0} (f,u_\e)=\int_\O f\cdot u\, dx$. \\
One  has 
\begin{multline*}
\int_{S_\e}\mu\, |\sigma_{\nu_\e}(u_\e)|\, |[(u_\e)_{\tau_\e}]_{S_\e}|d\sigma_\e=\e\int_{S_\e}\mu\,{\bf 1}_{\O'} \,|\sigma_{\nu_\e}(u_\e)|\, {1\over \e}|[(u_\e)_{\tau_\e}]_{S_\e}|d\sigma_\e\\
=\int_{\O\times S}\Teb(\mu)\Teb({\bf 1}_{\O'})|\Teb(\sigma_{\nu_\e}(u_\e))|\, {1\over \e}|\Teb([(u_\e)_{\tau_\e}]_{S_\e})|dx d\sigma_y.
\end{multline*}
From convergences \eqref{WLimit}$_{5,6}$, one obtains
$$
\int_{\O\times S}\mu\,{\bf 1}_{\O'}|\Sigma_\nu| \, |[\widehat{u}]_\tau|\, dxd\sigma_y =\lim_{\e\to 0}\int_{\O\times S}\Teb(\mu)\Teb({\bf 1}_{\O'})|\Teb(\sigma_{\nu_\e}(u_\e))|\, {1\over \e}|\Teb([(u_\e)_{\tau_\e}]_{S_\e})|dx d\sigma_y.
$$
Summarizing  the above convergences, that leads to
$$
\begin{aligned}
&\int_{\O\times \Y^*}a_{ijkl}\big(e_{ij}(u)+e_{y,ij}(\widehat{u})\big)\,\big(e_{kl}(u)+ \, e_{y,kl}(\widehat{u})\big)\,dxdy+\kappa \int_{\O\times \Y^*}|\nabla_y e_y(\widehat{u})|^2dxdy\\
&\qquad +\int_{\O \times S}\mu\,|\Sigma_\nu| \, |[\widehat{u}]_\tau|\, dxd\sigma_y-\int_\O f\cdot u\, dx\\
\leq &\int_{\O\times \Y^*}a_{ijkl}\big(e_{ij}(u)+e_{y,ij}(\widehat{u})\big)\,\big(e_{kl}(\Phi)+ \, e_{y,kl}(\widehat{\phi})\big)\,dxdy+\kappa\int_{\O\times \Y^*}\nabla_y e_y(\widehat{u})\,\nabla_y e_y(\widehat{\phi})\,dxdy\\
&\qquad +\int_{\O \times S}\mu\,|\Sigma_\nu| \, |[\widehat{\phi}]_\tau|dxd\sigma_y-\int_\O f\cdot \Phi\, dx.
\end{aligned}
$$ A density argument allows to conclude for every test fields in $H^1_\Gamma(\O)^3\times L^2(\O; {\cal K}^2_{per}(\Y^*))$. 
\end{proof}	

	Below, we prove the  uniqueness  of the solution of the unfolded problem  \eqref{PB-u}.
\begin{prop}\label{prop108}
The  problem \eqref{PB-u} admits a unique solution.\footnote{Proceeding as  in Section \ref{S8} with first a constant non-negative friction $\widehat{G}$ belonging to $L^2(\O\times S)$ and then  again using the Banach fixed-point theorem, one can prove the existence and the uniqueness of the solution to problem \eqref{PB-u}.    }
\end{prop}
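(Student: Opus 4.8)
The plan is to carry the fixed‑point scheme of Sections~\ref{S7}--\ref{S8} over to the homogenized level, using the footnoted idea. First I would fix a given friction $\widehat{G}$ in the cone $\widehat{\mathcal C}\doteq\{\widehat G\in L^2(\O\times S)\;|\;\widehat G\ge 0\ \text{a.e.}\}$ and consider the auxiliary variational inequality obtained from \eqref{PB-u} by replacing the factor $\mu\,|\Sigma_\nu|$ by $\widehat G$: find $(u,\widehat u)\in H^1_\Gamma(\O)^3\times L^2(\O;{\cal K}^2_{per}(\Y^*))$ such that the resulting inequality holds for all test pairs in the same space. This is the Euler inequality of the functional $(\Phi,\widehat\phi)\mapsto\frac12\int_{\O\times\Y^*}a_{ijkl}\big(e_{ij}(\Phi)+e_{y,ij}(\widehat\phi)\big)\big(e_{kl}(\Phi)+e_{y,kl}(\widehat\phi)\big)+\frac\kappa2\|\nabla_y e_y(\widehat\phi)\|^2_{L^2(\O\times\Y^*)}+\int_{\O\times S}\widehat G\,|[\widehat\phi]_\tau|-\int_\O f\cdot\Phi$ over the closed convex set $H^1_\Gamma(\O)^3\times L^2(\O;{\cal K}^2_{per}(\Y^*))$. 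Coercivity follows from Lemma~\ref{lem65} (which bounds $\|e(\Phi)\|_{L^2(\O)}$ and $\|e_y(\widehat\phi)\|_{L^2(\O\times\Y^*)}$ by $\|e(\Phi)+e_y(\widehat\phi)\|_{L^2(\O\times\Y^*)}$), together with Korn's inequality for $\Phi\in H^1_\Gamma(\O)^3$, the cell inequality \eqref{Eq. 38} for $\widehat\phi(x,\cdot)\in W(\Y^*)$, the $\kappa$‑term, and the nonnegativity of the friction term; strict convexity holds since the quadratic part vanishes only at $(0,0)$, again by Lemma~\ref{lem65}. Hence the auxiliary problem has a unique solution $(u_{\widehat G},\widehat u_{\widehat G})$.

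Next I would record the a priori estimates. Testing with $(0,0)$ gives $\|e(u_{\widehat G})+e_y(\widehat u_{\widehat G})\|_{L^2(\O\times\Y^*)}+\sqrt\kappa\,\|\nabla_y e_y(\widehat u_{\widehat G})\|_{L^2(\O\times\Y^*)}\le C\|f\|_{L^2(\O)}$, so that $\widehat u_{\widehat G}(x,\cdot)\in H^2_{per}(\Y^*)^3$ for a.e.\ $x$ (second‑order Korn in the cell, cf.\ Corollary~\ref{cor66}). Since the $a_{ijkl}$'s are $W^{1,\infty}$ near $S$ and $\partial{\cal S}$ is ${\cal C}^{1,1}$, the traces on $\partial{\cal S}^\pm$ of $a_{ijkl}\big(e_{ij}(u_{\widehat G})(x)+e_{y,ij}(\widehat u_{\widehat G})(x,\cdot)\big)\nu_k$ lie in $H^{1/2}(\partial{\cal S}^\pm)$ for a.e.\ $x$ and coincide (test with $H^1_{per}(\Y^*)$‑displacements, exactly as for \eqref{EstC10}); denote by $\Sigma_\nu(\widehat G)$ their common normal component restricted to $S$. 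A cell‑wise application of \eqref{EQ75} yields
\[
\|\Sigma_\nu(\widehat G)\|_{L^2(\O;H^{1/2}(S))}\le C_1\|\Ga\|_{W^{1,\infty}(S)}\big(\|e(u_{\widehat G})+e_y(\widehat u_{\widehat G})\|_{L^2(\O\times\Y^*)}+\|\nabla_y e_y(\widehat u_{\widehat G})\|_{L^2(\O\times\Y^*)}\big),
\]
and, arguing as in the last part of the proof of Proposition~\ref{TH71} with jump‑creating test displacements, $\Sigma_\nu(\widehat G)\le 0$ a.e.\ in $\O\times S$, so that $\widehat A(\widehat G)\doteq\mu\,|\Sigma_\nu(\widehat G)|$ belongs to $\widehat{\mathcal C}$.

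Then comes the Lipschitz dependence. For $\widehat G_1,\widehat G_2\in\widehat{\mathcal C}$, testing the inequality for $\widehat G_1$ with the solution for $\widehat G_2$ and conversely, adding, and using coercivity, the pointwise reverse triangle inequality $\big|\,|a|-|b|\,\big|\le|a-b|$, the cell estimate $\|[\widehat w]_S\|_{L^2(\O\times S)}\le C_0\|e_y(\widehat w)\|_{L^2(\O\times\Y^*)}$ from \eqref{JumpS-bis}, and Lemma~\ref{lem65}, one obtains $\|e(u_{\widehat G_1}-u_{\widehat G_2})+e_y(\widehat u_{\widehat G_1}-\widehat u_{\widehat G_2})\|_{L^2(\O\times\Y^*)}+\|\nabla_y e_y(\widehat u_{\widehat G_1}-\widehat u_{\widehat G_2})\|_{L^2(\O\times\Y^*)}\le\frac{C_0}{\min\{\overline{\alpha},\kappa\}}\|\widehat G_1-\widehat G_2\|_{L^2(\O\times S)}$. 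As $\Sigma_\nu$ depends linearly on $(u_{\widehat G},\widehat u_{\widehat G})$, applying the estimate above to the difference gives
\[
\|\Sigma_\nu(\widehat G_1)-\Sigma_\nu(\widehat G_2)\|_{L^2(\O;H^{1/2}(S))}\le\frac{C_0C_1\|\Ga\|_{W^{1,\infty}(S)}}{\min\{\overline{\alpha},\kappa\}}\,\|\widehat G_1-\widehat G_2\|_{L^2(\O\times S)},
\]
the exact analogue of \eqref{EstC11-bis}. Since $\|\widehat A(\widehat G_1)-\widehat A(\widehat G_2)\|_{L^2(\O\times S)}\le\|\mu\|_{L^\infty(\O)}\|\Sigma_\nu(\widehat G_1)-\Sigma_\nu(\widehat G_2)\|_{L^2(\O;H^{1/2}(S))}$, assumption \eqref{Contraction} makes $\widehat A$ a strict contraction on the complete metric space $\widehat{\mathcal C}$, so Banach's fixed‑point theorem gives a unique $\widehat G^*$. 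Finally, by \eqref{EstC10-TER} any solution $(u,\widehat u)$ of \eqref{PB-u} solves the auxiliary problem with $\widehat G=\mu\,|\Sigma_\nu(u,\widehat u)|$, hence equals $(u_{\widehat G},\widehat u_{\widehat G})$ by Step~1 and $\widehat G$ is a fixed point of $\widehat A$; conversely $(u_{\widehat G^*},\widehat u_{\widehat G^*})$ solves \eqref{PB-u}. The correspondence being bijective and the fixed point unique, \eqref{PB-u} has exactly one solution, which also re‑proves existence.

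The main obstacle is the second step, and it is precisely what the fourth‑order regularization is there for: one needs $\mu\,|\Sigma_\nu|$ to be controlled in $L^2(\O\times S)$ rather than merely in a negative‑order space, which forces $\widehat u_{\widehat G}(x,\cdot)\in H^2_{per}(\Y^*)$ and the cell‑wise trace bound \eqref{EQ75}; and this estimate must be propagated to the difference $\widehat G_1-\widehat G_2$ with the \emph{same} constant $C_0C_1\|\Ga\|_{W^{1,\infty}(S)}/\min\{\overline{\alpha},\kappa\}$ as at the $\e$‑level, so that \eqref{Contraction} still guarantees contractivity. Treating the nonsmooth term $|[\cdot]_\tau|$ through the reverse triangle inequality at the $L^2(\O\times S)$ level, rather than in $H^{1/2}$, is the point that makes the argument go through.
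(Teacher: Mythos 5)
Your route is correct, but it is not the one the paper actually writes out: you implement the footnote's programme (a given-friction auxiliary two-scale problem for $\widehat G\in L^2(\O\times S)$, $\widehat G\ge 0$, solved by strictly convex minimization, then a Lipschitz estimate for $\widehat G\mapsto \mu|\Sigma_\nu(\widehat G)|$ and Banach's fixed-point theorem under \eqref{Contraction}), which re-proves existence independently of the homogenization process. The paper's displayed proof is shorter: existence is simply inherited from Theorem \ref{TH107}, and only uniqueness is proved, by testing the inequality for $(u,\widehat u)$ with $(u',\widehat u')$ and conversely, adding, and using \eqref{JumpS-bis}, \eqref{EQ75} and Lemma \ref{lem65} to reach $\min\{\overline{\alpha},\kappa\}\,\GN^2\le C_0C_1\|\Ga\|_{W^{1,\infty}(S)}\|\mu\|_{L^\infty(\O)}\GN^2$, which contradicts \eqref{Contraction} unless the difference vanishes. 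Your contraction estimate is, in substance, exactly this computation (the same constants $C_0$, $C_1$ from the cell estimates, the same reliance on working at the $L^2(\O\times S)$ level through the reverse triangle inequality), so the two arguments buy the same uniqueness; what your version adds is an existence proof for \eqref{PB-u} that does not pass through the $\e$-problems, at the cost of setting up the whole fixed-point machinery. Note that, like the paper, you are implicitly assuming \eqref{Contraction}, which the proposition's statement does not restate.

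One caveat on your Step 2: the claims that the two one-sided traces of the elastic normal stress coincide and that $\Sigma_\nu(\widehat G)\le 0$ ``exactly as for \eqref{EstC10}'' are not justified for the auxiliary problem. Testing with jump-free cell fields now produces the Euler equation of the \emph{regularized} energy, so the natural transmission and sign conditions involve the hyperstress coming from the $\kappa\,\nabla_y e_y$ term, not the elastic traction $(\sigma\nu)\cdot\nu$ alone; at the $\e$-level the paper obtains these properties only by building them into the convex set $\GH_\e(\O)$, for which your two-scale auxiliary problem has no analogue. Fortunately these claims are inessential to your argument: $\mu|\Sigma_\nu(\widehat G)|\ge 0$ holds automatically, and $\Sigma_\nu(\widehat G)$ can simply be defined as the one-sided trace (consistently with \eqref{EstC10-TER}), so the mapping into the cone, the Lipschitz bound and the fixed-point correspondence all survive. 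With those two sentences removed or weakened, your proof is sound.
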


\begin{proof}  
 Denote
$$\GN(v,\widehat{v})=\sqrt{\big(\|e(v)+e_y(\widehat{v})\|_{L^2(\O\times \Y^*)}^2 + \|\,\nabla_y e_y(\widehat{v})\,\,\|^2_{L^2(\O\times \Y^*)}},\quad \forall (v,\widehat{v})\in H^1_\Gamma(\O)\times L^2(\O; H^2_{per}(\Y^*)^3\cap W(\Y^*)).$$	Due to Lemma \ref{lem65}, $\GN$ is a norm over $H^1_\Gamma(\O)\times L^2(\O; H^2_{per}(\Y^*)^3\cap W(\Y^*))$.\\
First,  note that the Theorem \ref{TH107} gives a pair $(u,\widehat{u})$ which satisfies  the  problem \eqref{PB-u}.\\[0.8mm]
 Below we only detail the proof of the uniqueness.\\[0.8mm]
Let $(u',\widehat{u}')$ be another solution of this problem. First, choose as test fields $(\Phi,\widehat{\phi})=(u',\widehat{u}')$ in \eqref{PB-u}, then since $(u',\widehat{u}')$ is also a solution, in the corresponding problem chose as test fields $(\Phi,\widehat{\phi})=(u,\widehat{u})$. Finally, add both inequalities. That gives
	\begin{equation*}
	\begin{aligned}
	&\int_{\O\times \Y^*}a_{ijkl}\big(e_{ij}(u-u')+e_{y,ij}(\widehat{u}-\widehat{u}')\big)\,\big(e_{kl}(u-u')+ \, e_{y,kl}(\widehat{u}-\widehat{u}')\big)\,dxdy\\
	+&\kappa \int_{\O'\times \Y^*}|\nabla_y e_y(\widehat{u}-\widehat{u}')|^2dxdy \leq \int_{\O\times S}-\mu(\sigma_\nu(u-u')+\sigma_{y,\nu}(\widehat{u}-\widehat{u}'))\, \big(|[\widehat{u}]_\tau|- |[\widehat{u}']_\tau|\big)dxd\sigma_y.
\end{aligned}
\end{equation*} 
Hence
\begin{multline*}
\overline{\alpha}\|e(u-u')+e_y(\widehat{u}-\widehat{u}')\|^2_{L^2(\O\times \Y^*)}+\kappa\|\nabla_y e_y(\widehat{u}-\widehat{u}')\|^2_{L^2(\O\times \Y^*)}\\
\leq \|\mu\|_{L^\infty(\O)}\|\sigma_\nu(u-u')+\sigma_{y,\nu}(\widehat{u}-\widehat{u}'))\|_{L^2(\O\times S)}\|[\widehat{u}]_\tau-[\widehat{u}']_\tau\|_{L^2(\O\times S)}.
\end{multline*}
The above inequality, \eqref{JumpS-bis} and \eqref{EQ75} (applied with the displacement $y\longrightarrow e(u-u')(x) y+(\widehat{u}-\widehat{u}')(x,y)$ defined for a.e. $x\in \O$)  lead to
$$\min\{\overline{\alpha},\kappa\}\big(\GN(u-u', \widehat{u}-\widehat{u}')\big)^2\leq \|\mu\|_{L^\infty(\O)}C_1\|\Ga\|_{W^{1,\infty}(S)}\GN(u-u',\widehat{u}-\widehat{u}') C_0\|e_y(\widehat{u}-\widehat{u}')\|_{L^2(\O\times \Y^*)}.$$
Condition \eqref{Contraction} gives the uniqueness of the solution.
\end{proof}
\noindent Due to the nonlinearity of the terms involving the tangential jumps,  a homogenized problem can not be obtained.	
\section{Annex}\label{S11}
In this section we denote $u_{\e,G,\kappa}$ the solution of \eqref{var_stat_G} (resp. $U_{\e,G}$ the solution of \eqref{var_stat_G-0}) with $G\in {\cal C}^{**}_\e$. 
\begin{prop} \label{prop111} There exists  $(u_G,\widehat{u}_G)\in  H^1_\Gamma(\O)^3\times L^2(\O; {\cal K}^1_{per}(\Y^*))$\footnote{see \eqref{Kper} for the definition of this space} such that when $(\e,\kappa)$ goes to $(0,0)$ (resp. $\e$ goes to $0$)
\begin{equation}\label{EQFIN-0}
\begin{aligned}
&\Tes(u_{\e,G,\kappa})\longrightarrow u_G\quad \hbox{strongly in }\;L^2(\O; H^1(\Y^*))^3,\\
&\Tes(\nabla u_{\e,G,\kappa})\longrightarrow  \nabla u_G+\nabla_y \widehat{u}_G\quad \hbox{strongly in }\;L^2(\O\times\Y^*)^{3\times 3},\\
\hbox{(resp.}\;\; 
&\Tes(U_{\e,G})\longrightarrow u_G\quad \hbox{strongly in }\;L^2(\O; H^1(\Y^*))^3,\\
&\Tes(\nabla U_{\e,G})\longrightarrow  \nabla u_G+\nabla_y \widehat{u}_G\quad \hbox{strongly in }\;L^2(\O\times\Y^*)^{3\times 3}.\hbox{)}
\end{aligned}
\end{equation}
The couple $(u_G,\widehat{u}_G)$ is the unique solution of the following variational inequality:
\begin{equation}\label{EQFIN}
\begin{aligned}
&\int_{\O\times \Y^*}a_{ijkl}\big(e_{ij}(u_G)+e_{y,ij}(\widehat{u}_{G})\big)\,\big(e_{kl}(\Phi-u_G)+ \, e_{y,kl}(\widehat{\phi}-\widehat{u}_{G})\big)\,dxdy\\
&\quad +\int_{\O\times S} G \,\big(|[\widehat{\phi}]_\tau|- |[\widehat{u}_{G}]_\tau|\big)dxd\sigma_y\geq \int_\O f\cdot (\Phi-u_G)\, dx,\\
&\quad \forall (\Phi, \widehat{\phi})\in H^1_\Gamma(\O)^3\times L^2(\O; {\cal K}^1_{per}(\Y^*)).
\end{aligned}
\end{equation}
\end{prop}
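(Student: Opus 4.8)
\noindent\textit{Proof (outline).} The plan is to reproduce the homogenization of Theorem~\ref{TH107}, simplified by two features of the present situation: the friction datum $G$ is \emph{fixed} (so the dissipation is linear), and $\kappa\to0$ jointly with $\e$ (so the regularization disappears in the limit). I carry out the argument for the regularized displacement $u_\e:=u_{\e,G,\kappa}$; the unregularized case $U_{\e,G}$ is completely analogous, and lighter, since every term carrying $\kappa$ is absent and the a priori bounds come from Proposition~\ref{TH71} instead of Theorem~\ref{TH71-bis}. I assume the natural compatibility on the data, $\mathcal T^b_\e(G)\to G$ strongly in $L^2(\O\times S)$ with $G\ge0$. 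First I would record the $\e$- and $\kappa$-uniform estimates \eqref{EstC1-bis}: $\|u_\e\|_{H^1(\O^*_\e)}\le C\|f\|_{L^2(\O)}$, $\sqrt\kappa\,\e\|\nabla e(u_\e)\|_{L^2(\O^*_\e)}\le C\|f\|_{L^2(\O)}$, $\|[u_\e]_{S_\e}\|_{H^{1/2}(S_\e)}\le C\sqrt\e\,\|f\|_{L^2(\O)}$. Applying the unfolding compactness theorem \cite[Theorem~2.13]{cddgz} I extract a subsequence, $u_G\in H^1_\Gamma(\O)^3$ and $\widehat u_G\in L^2(\O;H^1_{per}(\Y^*))^3$ (normalised so that $\widehat u_G(x,\cdot)\in W(\Y^*)$), with $\Tes(u_\e)\to u_G$ strongly in $L^2(\O;H^1(\Y^*))^3$, $\Tes(\nabla u_\e)\rightharpoonup\nabla u_G+\nabla_y\widehat u_G$ weakly in $L^2(\O\times\Y^*)^{3\times3}$, and, by Corollary~\ref{unfoldet_trace}, $\tfrac1\e\mathcal T^b_\e([u_\e]_{S_\e})\rightharpoonup[\widehat u_G]_S$ weakly in $L^2(\O;H^{1/2}(S))^3$; passing to the limit in the pointwise inequality $\mathcal T^b_\e([(u_\e)_{\nu_\e}]_{S_\e})\le0$ yields $[\widehat u_{G,\nu}]_S\le0$, so $\widehat u_G\in L^2(\O;\mathcal K^1_{per}(\Y^*))$.

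\medskip
\noindent Next I would pass to the limit in the variational inequality. As in the proof of Theorem~\ref{TH107} I take $v_\e(x)=\Phi(x)+\e\widehat\phi(x,x/\e)$, with $\Phi\in H^1_\Gamma(\O)^3\cap\mathcal C^\infty(\overline\O)$ and $\widehat\phi\in L^2(\O;\mathcal K^2_{per}(\Y^*))\cap\mathcal D(\O;H^2_{per}(\Y^*))$, as admissible test displacement, so that convergences \eqref{EQ920} hold, and I rewrite \eqref{var_stat_G} with the $u_\e$-quadratic terms on the right. The left-hand side then has a genuine limit: every term is linear in $u_\e$, hence tested against the strongly convergent unfolded data of $v_\e$ (the boundary-layer part on $\Lambda_\e$ vanishing since $e(v_\e)$ is bounded in $L^\infty$ and $|\Lambda_\e|\to0$); the regularization term $\kappa\e^2[\nabla e(u_\e),\nabla e(v_\e)]_\e$ is $O(\sqrt\kappa)$ by $\sqrt\kappa\,\e\|\nabla e(u_\e)\|_{L^2(\O^*_\e)}\le C$; and $\big(G,|[(v_\e)_{\tau_\e}]_{S_\e}|\big)_\e=\int_{\O\times S}\mathcal T^b_\e(G)\,|\tfrac1\e\mathcal T^b_\e([(v_\e)_{\tau_\e}]_{S_\e})|\,dx\,d\sigma_y\to\int_{\O\times S}G\,|[\widehat\phi]_\tau|\,dx\,d\sigma_y$. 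For the $\liminf$ of the right-hand side I use weak lower semicontinuity: of the convex integrand $\zeta\mapsto\int_{\O\times\Y^*}a_{ijkl}(y)\zeta_{ij}\zeta_{kl}$ along $\Tes(e(u_\e))\rightharpoonup e(u_G)+e_y(\widehat u_G)$ (plus the non-negative $\Lambda_\e$ term), of $\kappa\e^2\|\nabla e(u_\e)\|^2_{L^2(\O^*_\e)}\ge0$, and --- the point that requires care --- of $w\mapsto\int_{\O\times S}G\,|w|$, which is convex and Lipschitz on $L^2$, hence weakly lower semicontinuous since $G\ge0$; combined with the strong convergence of $\mathcal T^b_\e(G)$ and $\tfrac1\e\mathcal T^b_\e([(u_\e)_{\tau_\e}]_{S_\e})\rightharpoonup[\widehat u_G]_\tau$ (multiplication by the fixed $\nu\in W^{1,\infty}$ preserving weak convergence), this gives $\liminf\big(G,|[(u_\e)_{\tau_\e}]_{S_\e}|\big)_\e\ge\int_{\O\times S}G\,|[\widehat u_G]_\tau|$. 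Passing to the limit in the rewritten inequality yields \eqref{EQFIN} for smooth test fields, and a density argument (as in \cite{cddgz}) extends it to all $(\Phi,\widehat\phi)\in H^1_\Gamma(\O)^3\times L^2(\O;\mathcal K^1_{per}(\Y^*))$.

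\medskip
\noindent Then I would prove uniqueness and strong convergence. Since $G$ is fixed, the functional associated with \eqref{EQFIN} is strictly convex and coercive for $\|e(v)+e_y(\widehat v)\|_{L^2(\O\times\Y^*)}$, which by Lemma~\ref{lem65}, Korn's inequality and the condition $v=0$ on $\Gamma$ is an equivalent norm on $H^1_\Gamma(\O)^3\times L^2(\O;H^1_{per}(\Y^*)^3\cap W(\Y^*))$; hence \eqref{EQFIN} has a unique solution --- no smallness condition is needed here, in contrast with Proposition~\ref{prop108} --- and therefore the whole family converges to it. To upgrade the weak convergences to the strong ones in \eqref{EQFIN-0} I run the energy argument: choosing in the rewritten inequality a test field built from a sequence $(\Phi_n,\widehat\phi_n)\to(u_G,\widehat u_G)$ and using the limits already obtained, I get $\limsup_\e\,{\mathbf a}^\e(u_\e,u_\e)\le\int_{\O\times\Y^*}a_{ijkl}\big(e_{ij}(u_G)+e_{y,ij}(\widehat u_G)\big)\big(e_{kl}(u_G)+e_{y,kl}(\widehat u_G)\big)\,dx\,dy$; writing ${\mathbf a}^\e(u_\e,u_\e)=\int_{\O\times\Y^*}a_{ijkl}(y)\Tes(e_{ij}(u_\e))\Tes(e_{kl}(u_\e))+\int_{\Lambda_\e}a^\e_{ijkl}e_{ij}(u_\e)e_{kl}(u_\e)$ and discarding the non-negative last term, I conclude that $\int_{\O\times\Y^*}a_{ijkl}(y)\Tes(e_{ij}(u_\e))\Tes(e_{kl}(u_\e))$ converges to $\int_{\O\times\Y^*}a_{ijkl}\big(e_{ij}(u_G)+e_{y,ij}(\widehat u_G)\big)\big(e_{kl}(u_G)+e_{y,kl}(\widehat u_G)\big)$, hence $\Tes(e(u_\e))\to e(u_G)+e_y(\widehat u_G)$ strongly in $L^2(\O\times\Y^*)^{3\times3}$ by coercivity of $a$. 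Applying this with $\widetilde u_\e=u_\e-r_{u_\e}$ (Subsection~\ref{SS4.6}), for which $e(\widetilde u_\e)=e(u_\e)$ and $\tfrac1\e\Tes(\widetilde u_\e)\rightharpoonup c(x)+\nabla u_G(x)\,y+\widehat u_G$ weakly in $L^2(\O;H^1(\Y^*))^3$, the Korn--Wirtinger inequality on $\Y^*$ converts the strong convergence of $e_y\big(\tfrac1\e\Tes(\widetilde u_\e)\big)=\Tes(e(u_\e))$ into strong convergence of $\tfrac1\e\Tes(\widetilde u_\e)$ in $L^2(\O;H^1(\Y^*))^3$, i.e. $\Tes(\nabla u_\e)\to\nabla u_G+\nabla_y\widehat u_G$ strongly in $L^2(\O\times\Y^*)^{3\times3}$, and likewise $\Tes(u_\e)\to u_G$ strongly in $L^2(\O;H^1(\Y^*))^3$. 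The statement for $U_{\e,G}$ follows by the same steps with all $\kappa$-terms erased.

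\medskip
\noindent The main obstacle is the interplay of the non-smooth dissipation with the loss, when $\kappa\to0$, of the $H^2$-type regularity that in Theorem~\ref{TH107} gave strong convergence of $\Tes(e(u_\e))$ for free: strong convergence must now be recovered a posteriori through the energy identity, which forces a careful treatment of the boundary layer $\Lambda_\e$ and of the limit of $(G,|[(u_\e)_{\tau_\e}]_{S_\e}|)_\e$, where one cannot interchange the limit with $|\cdot|$ of a merely weakly convergent sequence and must instead rely on convexity and weak lower semicontinuity.
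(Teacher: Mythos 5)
Your proposal follows essentially the same route as the paper up to and including the derivation of \eqref{EQFIN}: uniform estimates, unfolding compactness, passage to the limit with the oscillating test field $v_\e=\Phi+\e\widehat\phi(\cdot,\cdot/\e)$, weak lower semicontinuity of the elastic energy and of the friction term (your explicit hypothesis $\mathcal T^b_\e(G)\to G$ strongly in $L^2(\O\times S)$ is exactly what the paper uses implicitly when it writes the liminf with $\mathcal T^b_\e(G)$), the $O(\sqrt\kappa)$ bound on the regularization term, density, and uniqueness of the limit problem by strict convexity — correctly noting that no smallness condition is needed since $G$ is fixed — hence convergence of the whole family. The paper stops there and refers to \cite{GMO} for the strong convergences \eqref{EQFIN-0}$_{2,4}$; you attempt to supply that argument, and this is where there is a genuine gap.

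The flaw is in your final step: strong convergence of $\tfrac1\e\Tes(\widetilde u_\e)$ in $L^2(\O;H^1(\Y^*))^3$ is \emph{not} the same as strong convergence of $\Tes(\nabla u_\e)$. Indeed $\nabla_y\big(\tfrac1\e\Tes(\widetilde u_\e)\big)=\Tes(\nabla u_\e)-\Tes(\nabla r_{u_\e})$, where $\nabla r_{u_\e}$ is the nonzero skew matrix associated with $b_{u_\e}(\e\xi)$, constant on each cell. The cell-by-cell Korn--Wirtinger inequality is blind to exactly these rigid rotations (they span its kernel), so your Cauchy argument only yields strong convergence of $\Tes\big(\nabla(u_\e-r_{u_\e})\big)$; the strong convergence of the piecewise-constant rotation field, which is precisely the content of \eqref{EQFIN-0}$_2$ beyond the strain corrector, is left unproved — from the information you use it is only weakly convergent, and the concluding ``i.e.'' is unjustified. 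The standard repair is global rather than cellwise: take smooth $(\Phi_n,\widehat\phi_n)$ approximating $(u_G,\widehat u_G)$, set $v_{\e,n}=\Phi_n+\e\widehat\phi_n(\cdot,\cdot/\e)$, and note that the energy convergence you already established gives $\|e(u_\e-v_{\e,n})\|_{L^2(\O^*_\e)}$ small for large $n$ (it also forces $\|e(u_\e)\|_{L^2(\Lambda_\e\cap\O^*_\e)}\to0$, since the unfolded part alone exhausts the limit energy); then the Korn inequality \eqref{eq315}$_1$ applied to the difference $u_\e-v_{\e,n}\in H^1_\Gamma(\O^*_\e)^3$ controls the full gradient, rotations included, by the strain, and a diagonal argument gives $\Tes(\nabla u_\e)\to\nabla u_G+\nabla_y\widehat u_G$ strongly. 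With this replacement (which is the argument of \cite{GMO} the paper invokes), the rest of your proof stands; the only other liberty you take — not checking that $v_\e$ lies in $\GH_\e(\O)$ and not only in ${\cal K}_\e$ — is one the paper itself takes in Theorem \ref{TH107}.
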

\begin{proof} First, recall that  the estimates \eqref{EstC1-ter}  hold. So, there exist a subsequence of $\{\e,\kappa\}$, still denoted $\{\e,\kappa\}$, and $(u_G,\widehat{u}_G)\in  H^1_\Gamma(\O)^3\times L^2(\O; {\cal K}^1_{per}(\Y^*))$ such that  
$$
\begin{aligned}
&\Tes(u_{\e,G,\kappa})\longrightarrow u_G\quad \hbox{strongly in }\;L^2(\O; H^1(\Y^*))^3,\\
&\Tes(\nabla u_{\e,G,\kappa})\rightharpoonup\nabla u_G+\nabla_y \widehat{u}_G\quad \hbox{weakly in }\;L^2(\O\times\Y^*)^{3\times 3},\\
&\sqrt\kappa \e\Tes(\nabla e(u_{\e,G,\kappa}))\rightharpoonup 0\quad \hbox{weakly in }\;L^2(\O\times\Y^*)^{27},\\
&{1\over \e}{\cal T}^b_\e([u_{\e,G,\kappa}]_{S_\e})\rightharpoonup  [\widehat{u}_G]_{S}\quad \hbox{weakly in }\;L^2(\O ; H^{1/2}(S))^3.
\end{aligned}
$$ Due to the lower semi-continuity with respect to the weak topology and the above convergences  for the unfolded sequences, we obtain 
\begin{multline*}
\liminf_{(\e,\kappa)\to (0,0)}\Big({\mathbf a}^{\e}(u_{\e,G,\kappa},u_{\e,G,\kappa})+ \kappa\e^2\|\nabla e(u_{\e,G,\kappa})\|^2_{L^2(\O^*_\e)}+{1\over \e}\int_{\O\times S}\Teb(G)\, \Teb(|[u_{\e,G,\kappa}]_\tau|)dxd\sigma_y-\int_{\O^*_\e} f\cdot u_{\e,G,\kappa}\, dx\Big)\\
		\geq \int_{\O\times \Y^*}a_{ijkl}\big(e_{ij}(u_{G})+e_{y,ij}(\widehat{u}_{G})\big)\,\big(e_{kl}(u_{G})+e_{y,kl}(\widehat{u}_{G})\big)\,dxdy+\int_{\O\times S} G \, |[\widehat{u}_{G}]_\tau|\,dxd\sigma_y-\int_{\O} f\cdot u_{G}\, dx
\end{multline*}
Now,  consider the test displacement $v_\e\in{\cal K}_\e$ introduced in  the proof of Theorem \ref{TH107}. One has
$$
\begin{aligned}
&\lim_{(\e,\kappa)\to (0,0)}\big({\mathbf a}^{\e}(u_{\e,G,\kappa}, v_\e)+ \kappa\e^2[\,\nabla e(u_{\e,G,\kappa})\,, \,\nabla e(v_\e)\,]_\e+ \big(G,|[(v_\e)_{\tau_\e}]_{S_\e}|\big)_\e - \left( f, v_\e\right)\big)\\
=&\int_{\O\times \Y^*}a_{ijkl}\big(e_{ij}(u_{G})+e_{y,ij}(\widehat{u}_{G})\big)\,\big(e_{kl}(\Phi)+e_{y,kl}(\widehat{\phi})\big)\,dxdy+\int_{\O\times S} G \, |[\widehat{\phi}]_\tau|\,dxd\sigma_y-\int_{\O} f\cdot \Phi\, dx
\end{aligned}
$$
Finally, a density argument allows to obtain inequality \eqref{EQFIN}. Since the problem \eqref{EQFIN} admits a unique solution,  the whole sequences converge to their limits. As in \cite{GMO}, we prove that the convergences \eqref{EQFIN-0}$_{2,4}$ are strong convergences.\\
Proceeding in the same way with the sequence $\{U_{\e,G}\}_\e$,  one obtains the same limit problem.
\end{proof} 
The homogenization of problem \eqref{var_stat} (and also of problem \eqref{var_Inq} for small $\kappa$) remains  an open problem.

\end{document}